\numberwithin{equation}{section}
\newtheorem{thm}{Theorem}[section]
\newtheorem{lem}[thm]{Lemma}
\newtheorem{cor}[thm]{Corollary}
\newtheorem{Prop}[thm]{Proposition}
\newtheorem{Rem}[thm]{Remark}
\newcommand{\N}{\mathbb{N}}
\newcommand{\R}{\mathbb{R}}
\newcommand\cm{\mathcal{M}}
\newcommand\cn{\mathcal{N}}
\begin{document}

\title[Critical coupled Hartree system]{High energy positive solutions for a coupled Hartree
system with Hardy-Littlewood-Sobolev critical exponents}

\author[F.\ Gao]{Fashun Gao$^\dag$}
\author[H.\ Liu]{Haidong Liu$^\ddag$}
\author[V.\ Moroz]{Vitaly Moroz$^\star$}
\author[M.\ Yang]{Minbo Yang$^{\ast\star}$}

\address{Fashun Gao,
\newline\indent Department of Mathematics and Physics, Henan University
of Urban Construction,
\newline\indent Pingdingshan, Henan, 467044, People's Republic of China}
\email{fsgao@zjnu.edu.cn}

\address{Haidong Liu,
\newline\indent Institute of Mathematics, Jiaxing University,
\newline\indent	Jiaxing, Zhejiang, 314000, People's Republic of China}
\email{liuhaidong@mail.zjxu.edu.cn}

\address{Vitaly Moroz,
\newline\indent Mathematics Department, Swansea University,
\newline\indent Bay Campus, Fabian Way, Swansea SA1 8EN, Wales, United Kingdom}
\email{v.moroz@swansea.ac.uk}

\address{Minbo Yang,
\newline\indent Department of Mathematics, Zhejiang Normal University,
\newline\indent	Jinhua, Zhejiang, 321004, People's Republic of China}
\email{mbyang@zjnu.edu.cn}

\subjclass[2010]{35A15, 35J20, 35J60}

\keywords{Coupled Hartree system; Critical nonlocal nonlinearity;
Hardy-Littlewood-Sobolev inequality.}

\thanks{$^\dag$Fashun Gao is partially supported by NSFC (11901155,
11671364).}

\thanks{$^\ddag$Haidong Liu is partially supported by NSFC (11701220,
11926334, 11926335).}

\thanks{$^\star$Vitaly Moroz and Minbo Yang are partially supported by the Royal Society IEC\textbackslash NSFC\textbackslash 191022.}

\thanks{$^\ast$Minbo Yang is partially supported by NSFC (11571317, 11971436, 12011530199) and ZJNSF(LD19A010001).}

\begin{abstract}
We study the coupled Hartree system
$$
\left\{\begin{array}{ll}
-\Delta u+ V_1(x)u
=\alpha_1\big(|x|^{-4}\ast u^{2}\big)u+\beta \big(|x|^{-4}\ast v^{2}\big)u
&\mbox{in}\ \R^N,\\[1mm]
-\Delta v+ V_2(x)v
=\alpha_2\big(|x|^{-4}\ast v^{2}\big)v +\beta\big(|x|^{-4}\ast u^{2}\big)v
&\mbox{in}\ \R^N,
\end{array}\right.
$$
where $N\geq5$,
$\beta>\max\{\alpha_1,\alpha_2\}\geq\min\{\alpha_1,\alpha_2\}>0$,
and $V_1,\,V_2\in L^{N/2}(\R^N)\cap L_{\text{loc}}^{\infty}(\R^N)$ are
nonnegative potentials. This system is critical in the sense of the Hardy-Littlewood-Sobolev inequality.
For the system with $V_1=V_2=0$ we employ moving sphere arguments in integral form to classify positive solutions and to prove the uniqueness of positive solutions up to translation and dilation, which is of independent interest.
Then using the uniqueness property, we establish a nonlocal version of the global compactness lemma and prove the existence of a high energy positive solution for the system assuming that $|V_1|_{L^{N/2}(\R^N)}+|V_2|_{L^{N/2}(\R^N)}>0$ is suitably small.
\end{abstract}

\maketitle

\begin{center}
\begin{minipage}{8.5cm}
	\small
	\tableofcontents
\end{minipage}
\end{center}
\bigskip

\section{Introduction and main results}

The two-component coupled Hartree system
\begin{equation}\label{eq1.1}
\left\{\begin{array}{ll}
i\partial_{t} \mathbf{\Psi}_1
=-\Delta \mathbf{\Psi}_1+W_1(x)\mathbf{\Psi}_1
-\alpha_1\big(K(x)\ast|\mathbf{\Psi}_1|^{2}\big)\mathbf{\Psi}_1
-\beta\big(K(x)\ast|\mathbf{\Psi}_2|^{2}\big)\mathbf{\Psi}_1,
&(t,x)\in \R^+\times \R^N,\\[1mm]
i\partial_{t} \mathbf{\Psi}_2
=-\Delta \mathbf{\Psi}_2+W_2(x)\mathbf{\Psi}_2
-\alpha_2\big(K(x)\ast|\mathbf{\Psi}_2|^{2}\big)\mathbf{\Psi}_2
-\beta\big(K(x)\ast|\mathbf{\Psi}_1|^{2}\big)\mathbf{\Psi}_2,
&(t,x)\in \R^+\times \R^N
\end{array}\right.
\end{equation}
appears in several physical models, such as in the nonlinear
optics \cite{MS} and in the study of a two-component Bose-Einstein Condensate \cite{EGBB,D}.
Here, $\mathbf{\Psi}_i:\mathbb{R}^+\times \mathbb{R}^N \to\mathbb{C}$, $W_i$ are the
external potentials, $K$ is a nonnegative response function which possesses
information about the self-interaction between the particles and $\alpha_i$ measures the strength of the self-interactions in each component: $\alpha_i>0$ corresponds to the attractive (focusing) and $\alpha_i<0$ to the repulsive (defocusing) self-interactions. The coupling constant  $\beta>0$ corresponds to the attraction (cooperation) and $\beta<0$ to the repulsion (competition) between the two components in the system.
In this work we are interested in the purely attractive case $\alpha_i,\beta>0$.
We refer the reader to \cite{LNR14,LNR16} and references therein for the physical background and mathematical derivation of Hartree theory in the case of a single equation.

When the response function is a delta function, i.e., $K(x)=\delta(x)$, the
nonlinear response is local and the problem has been intensively studied in the
past twenty years. In this case, via the ansatz
$\mathbf{\Psi}_1(t, x)=e^{-iE_1t} u(x)$ and $\mathbf{\Psi}_2(t, x)=e^{-iE_2t}v(x)$,
\eqref{eq1.1} is transformed into a coupled nonlinear Schr\"odinger system
\begin{equation}\label{eq1.2}
\left\{\begin{array}{ll}
-\Delta u+V_1(x)u=\alpha_1 u^3+\beta uv^2 &\text{in}\ \R^N,\\[1mm]
-\Delta v+V_2(x)v=\alpha_2 v^3+\beta u^2v &\text{in}\ \R^N,
\end{array} \right.
\end{equation}
where $V_i(x)=W_i(x)-E_i$ for $i=1, 2$. Existence, multiplicity and properties
of weak solutions of \eqref{eq1.2} have been investigated by many authors.
See, for example, \cite{AC, BWW, CZ1, CZ2, CLZ, DaW, LinWei1, LL,
LL1, LW, MMP, PPW, SWa, SB, TV, WY} and references therein.
Clearly, \eqref{eq1.2} may have  semitrivial solutions of the form $(u,0)$ for some $u\neq0$ or $(0,v)$ for some $v\neq0$. Looking for nontrivial solutions of \eqref{eq1.2} with both components being nonzero is more complicated and requires new techniques and ideas. Here we only recall some results closely related to the current paper.
Chen and Zou \cite{CZ1, CZ2} investigated the nonlinear Schr\"odinger system
$$
\left\{\begin{array}{ll}
-\Delta u+\lambda_1 u=\alpha_1 u^{2p-1}+\beta u^{p-1}v^{p}
&\mbox{in}\ \Omega,\\[1mm]
-\Delta v+\lambda_2 v=\alpha_2 v^{2p-1}+\beta u^{p}v^{p-1}
&\mbox{in}\ \Omega,\\[1mm]
u,\,v\geq0\ \mbox{in}\ \Omega,\ \  u=v=0\ \mbox{on}\ \partial\Omega,
&
\end{array}\right.
$$
where $\Omega\subset \R^{N}$ is a smooth bounded domain,
$2p=\frac{2N}{N-2}$ is the Sobolev critical exponent,
$-\lambda_1(\Omega)<\lambda_1,\,\lambda_2<0$, $\alpha_{1},\,\alpha_{2}>0$,
and $\beta\neq0$. They established the existence,
uniqueness and limit behaviour of positive least energy solution. It turned out
that results in the higher dimensions are quite different from those in $N=4$. In \cite{CP}, considering the functional constrained on a subset of the Nehari manifold consisting of functions invariant with respect to a subgroup of $O(N +1)$, the authors obtained infinitely many positive solutions. In \cite{SWa}, the authors showed that the Palais-Smale condition holds at any levels in a small right neighbourhood of the least energy and then proved via a contradiction argument that there is a positive solution with critical value in this small neighbourhood. Using positive solutions of the corresponding scalar equation as building blocks, the authors of \cite{PST} constructed positive solutions for systems via the Lyapunov-Schmidt reduction argument, revealing concentration and blow-up features as well as a tower shape of the solutions. Recently, Liu
and Liu \cite{LL1} considered the nonlinear Schr\"odinger system with critical
nonlinearities
\begin{equation}\label{eq1.3}
\left\{\begin{array}{l}
-\Delta u+V_1(x)u=\alpha_{1}u^{3}+\beta uv^{2}\ \ \mbox{in}\ \R^{4},\\[1mm]
-\Delta v+V_2(x)v=\alpha_{2}v^{3}+\beta u^{2}v\ \ \,\mbox{in}\ \R^{4},\\[1mm]
u,\,v\geq0\ \mbox{in}\ \R^{4},\ \ u, v\in D^{1,2}(\R^{4}),\\
\end{array}
\right.
\end{equation}
where $V_1,\,V_2\in L^2(\R^4)\cap L_{\text{loc}}^{\infty}(\R^4)$ are
nonnegative potential functions. If $\beta>\max\{\alpha_1,\alpha_2\}\ge \min\{\alpha_1,\alpha_2\}>0$ and
$|V_1|_{L^2(\R^4)}+|V_2|_{L^2(\R^4)}>0$ is suitably small, they proved that
\eqref{eq1.3} has at least a positive solution with a high energy level.
This generalizes the well known result for semilinear Schr\"odinger equation
by Benci and Cerami \cite{BC} to the coupled nonlinear Schr\"odinger system.

In this work we study standing wave solutions of \eqref{eq1.1} in the purely attractive case $\alpha_i,\beta>0$
with a Riesz potential response function, i.e.,
$K(x)= |x|^{-\mu}$ where $\mu\in(0,N)$. Clearly, $\mathbf{\Psi}_1(t,x)=e^{-iE_1t}u(x)$ and
$\mathbf{\Psi}_2(t, x)=e^{-iE_2t}v(x)$ solve \eqref{eq1.1} if and only if
$(u(x), v(x))$ is a solution of the system
\begin{equation}\label{eq1.4}
\left\{\begin{array}{ll}
-\Delta u +V_1(x)u=\alpha_1\big(|x|^{-\mu}\ast u^2\big)u
+\beta\big(|x|^{-\mu}\ast v^2\big)u
&\text{in}\ \R^N,\\[1mm]
-\Delta v +V_2(x)v=\alpha_2\big(|x|^{-\mu}\ast v^2\big)v
+\beta\big(|x|^{-\mu}\ast u^2\big)v
&\text{in}\ \R^N,
\end{array}\right.
\end{equation}
where again $V_i(x)=W_i(x)-E_i$ for $i=1, 2$. There are very few results available on
the coupled Hartree system of type \eqref{eq1.4}. The first attempt is due to Yang, Wei and Ding \cite{YWD}, there the authors considered a singular perturbed
problem related to \eqref{eq1.4} and proved the existence of a ground
state solution when the coupling constant $\beta$ is large.
In
\cite{WS}, Wang and Shi studied \eqref{eq1.4} with positive constant
potentials and proved the
existence and nonexistence of positive ground state solutions. For the critical case, the
authors of \cite{ZSSY} considered a critical coupled Hartree system with a
fractional Laplacian operator and proved the existence of a ground state
solution via the Dirichlet-to-Neumann map.

 Clearly,
semitrivial solutions of \eqref{eq1.4} correspond to solutions of the Choquard
type equation
$$
-\Delta u+ V_i(x)u=\alpha_i\big(|x|^{-\mu}\ast u^2\big)u\ \ \mbox{in}\ \R^N.
$$
The Choquard type equation goes back to the description of
the quantum theory of a polaron at rest by Pekar in 1954 \cite{Ps} and the
modelling of an electron trapped in its own hole in 1976 in the work of Choquard
\cite{L1}. They also appear in the study of boson stars \cite{ES}.
Mathematically, Lieb \cite{L1} and Lions \cite{Ls} studied the existence of solutions to the Choquard's equation.
Lieb \cite{L1} also established the uniqueness of the ground state solution when $N=3$ and $\mu=-1$.
Ma and Zhao \cite{MZ} studied symmetry and uniqueness of positive solutions.
The existence of radial ground states with nonlinearities more general than $u^2$  was studied in \cite{MS1,MS3}.
The authors of \cite{GSYZ} considered the Choquard equation
\begin{equation}\label{eq1.6}
-\Delta u+V(x)u=\big(|x|^{-\mu}\ast |u|^{2_{\mu}^{\ast}}\big)|u|^{2_{\mu}^{\ast}-2}u
\ \ \mbox{in}\ \R^N,
\end{equation}
where $2_{\mu}^{\ast}=\frac{2N-\mu}{N-2}$ is the critical exponent.
They established a global compactness result and proved that \eqref{eq1.6}
has at least a positive solution if $|V|_{L^{N/2}(\R^N)}>0$ is suitable small.
This extended to the nonlocal Choquard equation the well known result for semilinear Schr\"odinger equation by Benci
and Cerami \cite{BC}.
The existence of multiple solutions for \eqref{eq1.6} was established in \cite{AFM}.
Lei \cite{Lei}, Du and Yang \cite{DY} studied positive solutions of the critical equation
\begin{equation}\label{eq1.5}
-\Delta u=\big(|x|^{-\mu}\ast |u|^{2_{\mu}^{\ast}}\big)|u|^{2_{\mu}^{\ast}-2}u
\ \ \mbox{in}\ \R^N,
\end{equation}
and proved that every positive solution of \eqref{eq1.5} must assume the form
$$
u(x)=c\Big(\frac{\delta}{\delta^{2}+|x-z|^{2}}\Big)^{\frac{N-2}{2}}.
$$
They also established the nondegeneracy result when $\mu$ is close to
$N$. We also refer the readers to \cite{Acker, AGSY, ANY, GSYZ, MZ, WW} and a survey \cite{MS7} for recent progress on the topic of Choquard equation.

Inspired by the work in \cite{BC, LL} for local nonlinear Schr\"odinger equations and \cite{CeMo} for a Schr\"odinger--Poisson system,
in the present paper we aim to study the existence of nontrivial solutions of the critical Hartree system
\begin{equation}\label{eq1.7}
\left\{\begin{array}{ll}
-\Delta u+ V_1(x)u
=\alpha_1\big(|x|^{-4}\ast u^{2}\big)u+\beta \big(|x|^{-4}\ast v^{2}\big)u
&\mbox{in}\ \R^N,\\[1mm]
-\Delta v+ V_2(x)v
= \alpha_2\big(|x|^{-4}\ast v^{2}\big)v +\beta\big(|x|^{-4}\ast u^{2}\big)v
&\mbox{in}\ \R^N,
\end{array}\right.
\end{equation}
where $N\geq5$,
$\beta>\max\{\alpha_1,\alpha_2\}\geq\min\{\alpha_1,\alpha_2\}>0$,
and $V_1,\,V_2\in L^{\frac{N}{2}}(\R^N)\cap L_{\text{loc}}^{\infty}(\R^N)$
are nonnegative potential functions. Our goal is to find a nontrivial positive solution of \eqref{eq1.7} at a higher energy level when $L^2$-norm of $V_i$ are both suitably small.

To state the main results, we first recall
the Hardy-Littlewood-Sobolev inequality (see \cite[Theorem 4.3]{LLo}) to clarify the meaning of "critical" for the nonlocal Hartree equation.

\begin{Prop}\label{pro1.1}
Let $t,\,r>1$ and $0<\mu<N$ be such that $\frac{1}{t}+\frac{\mu}{N}+\frac{1}{r}=2$.
Then there is a sharp constant $C(N,\mu,t)$ such that, for $f\in L^{t}(\R^N)$
and $h\in L^{r}(\R^N)$,
\begin{equation}\label{eq1.8}
\left|\int_{\R^{N}}\int_{\R^{N}}\frac{f(x)h(y)}{|x-y|^{\mu}}dxdy\right|
\leq C(N,\mu,t) |f|_{L^t(\R^N)}|h|_{L^r(\R^N)}.
\end{equation}
In particular, if $t=r=\frac{2N}{2N-\mu}$ then
$$
C(N,\mu,t)=C(N,\mu)=\pi^{\frac{\mu}{2}}
\frac{\Gamma(\frac{N}{2}-\frac{\mu}{2})}{\Gamma(N-\frac{\mu}{2})}
\bigg(\frac{\Gamma(\frac{N}{2})}{\Gamma(N)}\bigg)^{-1+\frac{\mu}{N}},
$$
where $\Gamma(s)=\int_0^{+\infty} x^{s-1}e^{-x}\,dx$, $s>0$. In this
case, equality in \eqref{eq1.8} holds if and only if $f\equiv(const.)\,h$
and
$$
h(x)=c\big(\delta^{2}+|x-z|^{2}\big)^{-\frac{2N-\mu}{2}}
$$
for some $c\in \mathbb{R}$, $\delta>0$ and $z\in \R^{N}$.
\end{Prop}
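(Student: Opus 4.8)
The plan is to establish the statement in two stages: the inequality \eqref{eq1.8} with \emph{some} finite constant, and then — in the conformal case $t=r=\frac{2N}{2N-\mu}$ — the identification of the \emph{sharp} constant $C(N,\mu)$ together with the classification of extremals.

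For the inequality itself I would first use the layer-cake representation to reduce to bounding $\iint|x-y|^{-\mu}\chi_A(x)\chi_B(y)\,dx\,dy$ for characteristic functions of sets of finite measure, and then reassemble. Splitting the kernel as $|x-y|^{-\mu}=|x-y|^{-\mu}\chi_{\{|x-y|\le R\}}+|x-y|^{-\mu}\chi_{\{|x-y|>R\}}$, estimating the near piece by $\int_{|z|\le R}|z|^{-\mu}\,dz\cdot\min\{|A|,|B|\}$ and the far piece by a bound of the form $R^{-\mu}|A|^{1/2}|B|^{1/2}$, and then optimizing over $R$, yields a weak-type $(t,r)$ bound for the bilinear form; equivalently the Riesz potential $h\mapsto|x|^{-\mu}\ast h$ maps $L^r(\R^N)$ into weak-$L^{t'}(\R^N)$. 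The full strong estimate \eqref{eq1.8} then follows from the Marcinkiewicz interpolation theorem, or directly from the weak Young inequality together with the fact that $|x|^{-\mu}\in L^{N/\mu,\infty}(\R^N)$.

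For the conformal case I would follow Lieb's argument. First, by the Riesz rearrangement inequality the left-hand side of \eqref{eq1.8} does not decrease when $f,h$ are replaced by their symmetric decreasing rearrangements, while the right-hand side is unchanged; so it suffices to maximize over radial nonincreasing functions, and using the $f\leftrightarrow h$ symmetry one may take $f=h$. The crucial point is that precisely when $t=r=\frac{2N}{2N-\mu}$ the quotient $\iint|x-y|^{-\mu}f(x)f(y)\,dx\,dy\,/\,|f|_{L^t}^{2}$ is invariant under the conformal group of $\R^N\cup\{\infty\}$: transplanting to the sphere $S^N$ via stereographic projection turns it into the analogous quotient on $S^N$, invariant under both $O(N+1)$ and the M\"obius transformations. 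I would then establish existence of a maximizer — either by the competing-symmetries iteration (alternating symmetric decreasing rearrangement in $\R^N$ with the conformal pull-back of rearrangement on $S^N$, and showing the iteration converges to a common fixed point) or by concentration-compactness on $S^N$, where the conformal symmetry prevents loss of mass. The maximizer solves the Euler--Lagrange integral equation $\bigl(|x|^{-\mu}\ast f\bigr)(x)=\lambda f(x)^{t-1}$, and I would classify its positive solutions by the moving-sphere method in integral form — which is exactly the technique this paper develops later for the system \eqref{eq1.7} — to conclude that $f(x)=c(\delta^2+|x-z|^2)^{-(2N-\mu)/2}$.

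Finally, with the explicit extremal in hand I would compute $C(N,\mu)$ by substitution, using the beta-type identity $\int_{\R^N}(1+|x|^2)^{-a}\,dx=\pi^{N/2}\Gamma(a-\tfrac{N}{2})/\Gamma(a)$ and the fact that convolving $(1+|y|^2)^{-(2N-\mu)/2}$ against $|x-y|^{-\mu}$ again produces a multiple of $(1+|x|^2)^{-\mu/2}$; collecting the Gamma factors gives the stated formula. The hard part will be the third paragraph: making the conformal invariance bite through the transplantation to $S^N$, the attendant compactness — ruling out vanishing and dichotomy on the sphere, or proving convergence of the competing-symmetries iteration — and the strict form of the Riesz rearrangement inequality that is needed to pin down uniqueness of the extremal up to the parameters $c,\delta,z$.
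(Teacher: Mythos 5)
The paper does not actually prove Proposition~\ref{pro1.1}; it is quoted verbatim from Lieb--Loss \cite[Theorem~4.3]{LLo} and serves only as an input to the variational framework. So there is no ``paper's own proof'' to compare against. That said, your outline is a faithful summary of the standard path: your first paragraph sketches the non-sharp estimate (weak-type bound for the Riesz kernel plus Marcinkiewicz, or weak Young since $|x|^{-\mu}\in L^{N/\mu,\infty}$), and your second and third paragraphs retrace Lieb's conformal argument for the sharp constant --- Riesz rearrangement to reduce to radial nonincreasing $f=h$, stereographic transplantation to $S^N$, existence of a maximizer via competing symmetries or concentration--compactness (where conformal invariance rules out loss of mass), identification of the extremal, and then the explicit Gamma-function computation.

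Two small remarks on accuracy. In your near/far split for characteristic functions, the far piece is bounded by $R^{-\mu}|A||B|$, not $R^{-\mu}|A|^{1/2}|B|^{1/2}$; the latter does not hold for general disjoint sets. The clean statement is the one you give as an alternative (weak Young plus $|x|^{-\mu}\in L^{N/\mu,\infty}$), so this is cosmetic. Second, your classification of the extremal via the moving-sphere method in integral form is valid --- it is exactly the Chen--Li--Ou argument \cite{CLO} --- but it postdates Lieb's 1983 proof; Lieb pinned down the extremal on $S^N$ by a different (and more delicate) uniqueness argument specific to the conformally invariant functional. Using moving spheres here is arguably cleaner and dovetails nicely with the technique the paper develops for the system, so it is a sensible modernization. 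Overall the sketch is correct in structure and you have correctly flagged the genuinely hard steps (existence of a maximizer on $S^N$, and the strict rearrangement inequality needed for uniqueness of extremals); a complete write-up would need to fill those in.
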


According to Proposition \ref{pro1.1}, the functional
$$
\int_{\R^{N}}\int_{\R^{N}}\frac{|u(x)|^{p}|v(y)|^{p}}{|x-y|^{\mu}}dxdy
$$
is well defined in $H^1(\R^N)\times H^1(\R^N)$ if
$\frac{2N-\mu}{N}\leq p\leq\frac{2N-\mu}{N-2}$.
Here the constant $2_{\mu}^{\ast}=\frac{2N-\mu}{N-2}$ is called the upper
Hardy-Littlewood-Sobolev critical exponent.
In this sense, \eqref{eq1.7} is said to be a critical Hartree system.

The authors of \cite{GY} investigated a critical Choquard type equation on a bounded domain and extended the well known
results in \cite{BN}. In particular, it was proved in \cite{GY} that the infimum
$$
S_{H,L}:=
\inf\limits_{u\in D^{1,2}(\R^N)\backslash\{{0}\}}\ \
\frac{\displaystyle\int_{\R^N}|\nabla u|^{2}dx}{\displaystyle\big(\int_{\R^N}\int_{\R^N}
\frac{u^2(x)u^2(y)}{|x-y|^4}dxdy\big)^{\frac12}}
$$
is achieved if and only if
$$
u(x)=c\Big(\frac{\delta}{\delta^{2}+|x-z|^{2}}\Big)^{\frac{N-2}{2}},
$$
where $c>0$, $\delta>0$ and $z\in \R^{N}$. Moreover,
\begin{equation}\label{eq1.9}
S_{H,L}=\frac{S}{\sqrt{C(N,4)}},
\end{equation}
where $S$ is the optimal constant for the Sobolev embedding
$D^{1,2}(\R^N)\hookrightarrow L^{\frac{2N}{N-2}}(\R^N)$.

Our first step in this work is to establish a classification of positive solutions for the critical Hartree system
\begin{equation}\label{eq3.1+}
\left\{\begin{array}{ll}
-\Delta u=\alpha_1\big(|x|^{-4}\ast u^{2}\big)u+\beta \big(|x|^{-4}\ast v^{2}\big)u
&\mbox{in}\ \R^N,\\[1mm]
-\Delta v=\alpha_2\big(|x|^{-4}\ast v^{2}\big)v +\beta\big(|x|^{-4}\ast u^{2}\big)v
&\mbox{in}\ \R^N,
\end{array}\right.
\end{equation}
which plays a role of the limit system for \eqref{eq1.7}.
To formulate our result denote $k_{0}=\frac{\beta-\alpha_2}{\beta^{2}-\alpha_1\alpha_2}$,  $l_{0}=\frac{\beta-\alpha_1}{\beta^{2}-\alpha_1\alpha_2}$ and $R_{N}=\frac14 \pi^{-\frac{N}{2}}\Gamma(\frac{N-2}{2})$. For
$0<s<\frac{N}{2}$ we set
$$
I(s)=\frac{\pi^{\frac{N}{2}}\Gamma(\frac{N-2s}{2})}{\Gamma(N-s)}.
$$
Using moving sphere arguments in integral form inspired by \cite{CLO, DY, Lei} we establish the uniqueness of positive solutions of  \eqref{eq3.1+} up to translation and dilation.

\begin{thm}\label{thm3.1+}
	Let $\beta>\max\{\alpha_1,\alpha_2\}\geq\min\{\alpha_1,\alpha_2\}>0$. If $(u, v)\in H:=D^{1,2}(\mathbb{R}^{N})\times D^{1,2}(\mathbb{R}^{N})$ is a positive classical solution of \eqref{eq3.1+}, then
	$$
	u(x)=C_{1}\Big(\frac{\tau}{\tau^{2}+|x-\overline{x}|^{2}}\Big)^{\frac{N-2}{2}},
	\ \ v(x)=C_{2}\Big(\frac{\tau}{\tau^{2}+|x-\overline{x}|^{2}}\Big)^{\frac{N-2}{2}}
	$$
	for some $\tau>0$ and $\overline{x}\in\R^N$, where
	$$
	C_{1}=\frac{\sqrt{k_{0}}}{\sqrt{R_{N}I(2)I\big(\frac{N-2}{2}\big)}},
	\ \ C_{2}=\frac{\sqrt{l_{0}}}{\sqrt{R_{N}I(2)I\big(\frac{N-2}{2}\big)}}.
	$$
\end{thm}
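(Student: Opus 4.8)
The plan is to reformulate \eqref{eq3.1+} as a system of integral equations and then run a moving-sphere argument in integral form in the spirit of \cite{CLO, DY, Lei}, pinning down the amplitudes $C_1,C_2$ at the end by substituting the resulting profiles back into the equations. First I would establish regularity, decay and the integral reformulation. Since $(u,v)\in D^{1,2}(\R^N)\times D^{1,2}(\R^N)$, Proposition~\ref{pro1.1} gives $|x|^{-4}\ast u^{2},\,|x|^{-4}\ast v^{2}\in L^{N/2}(\R^N)$, so the right-hand sides of \eqref{eq3.1+} lie in $L^{2N/(N+2)}(\R^N)$; a bootstrap based on the Hardy--Littlewood--Sobolev inequality then yields $u,v\in L^\infty(\R^N)$ together with the decay $u(x),v(x)=O(|x|^{-(N-2)})$ as $|x|\to\infty$. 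Combining this with the superharmonicity of $u$ and $v$ and the Riesz decomposition (a nonnegative harmonic function on $\R^N$ that is dominated by an $L^{2N/(N-2)}$ function is zero) shows that $(u,v)$ solves \eqref{eq3.1+} if and only if
$$
u(x)=R_N\!\int_{\R^N}\frac{\alpha_1\big(|y|^{-4}\ast u^{2}\big)u+\beta\big(|y|^{-4}\ast v^{2}\big)u}{|x-y|^{N-2}}\,dy,\qquad
v(x)=R_N\!\int_{\R^N}\frac{\alpha_2\big(|y|^{-4}\ast v^{2}\big)v+\beta\big(|y|^{-4}\ast u^{2}\big)v}{|x-y|^{N-2}}\,dy,
$$
where $R_N|x|^{-(N-2)}$ is the fundamental solution of $-\Delta$ on $\R^N$.

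Next I would carry out the moving-sphere scheme. For $x_0\in\R^N$ and $\lambda>0$ put $x^{x_0,\lambda}=x_0+\lambda^2(x-x_0)|x-x_0|^{-2}$ and define the Kelvin transforms $u_{x_0,\lambda}(x)=(\lambda/|x-x_0|)^{N-2}u(x^{x_0,\lambda})$ and $v_{x_0,\lambda}(x)=(\lambda/|x-x_0|)^{N-2}v(x^{x_0,\lambda})$. Because $\mu=4$ corresponds to the upper critical exponent $2_{\mu}^{\ast}=2$, the integral system is invariant under these transforms; subtracting the integral identities for $(u,v)$ and for $(u_{x_0,\lambda},v_{x_0,\lambda})$ and using the sign of the kernel difference $|x-y|^{-(N-2)}-(\lambda/|x-x_0|)^{N-2}|x^{x_0,\lambda}-y|^{-(N-2)}$ on $\{|x-x_0|>\lambda\}$ produces a coupled pair of integral inequalities for $w_u:=u-u_{x_0,\lambda}$ and $w_v:=v-v_{x_0,\lambda}$. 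Since $\alpha_1,\alpha_2,\beta>0$, the nonlinear remainder is dominated by $w_u$ and $w_v$ with nonnegative coefficients, and a Hardy--Littlewood--Sobolev smallness estimate on thin and exterior regions gives: (a) $w_u,w_v\ge0$ on $\{|x-x_0|>\lambda\}$ for all small $\lambda$; (b) $\bar\lambda(x_0):=\sup\{\lambda>0:\ w_u,w_v\ge0\text{ on }\{|x-x_0|>\mu\}\text{ for all }\mu\in(0,\lambda]\}$ is well defined; and (c) if $\bar\lambda(x_0)<\infty$ then at least one of $w_u\equiv0$, $w_v\equiv0$ holds at $\lambda=\bar\lambda(x_0)$, and the equations together with the injectivity of the Riesz potential $|x|^{-4}\ast\,\cdot\,$ then force \emph{both} $u_{x_0,\bar\lambda(x_0)}\equiv u$ and $v_{x_0,\bar\lambda(x_0)}\equiv v$. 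If instead $\bar\lambda(x_0)=\infty$ for some $x_0$, comparing at the scales $\lambda=|x-x_0|^{1/2}$ yields $u(x)\gtrsim|x-x_0|^{-(N-2)/2}$, which contradicts $u\in L^{2N/(N-2)}(\R^N)$; hence $\bar\lambda(x_0)<\infty$ and $u_{x_0,\bar\lambda(x_0)}\equiv u$, $v_{x_0,\bar\lambda(x_0)}\equiv v$ for every $x_0\in\R^N$.

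Finally I would read off the solution and the constants. By the calculus lemma for functions that are invariant under a Kelvin transform about every point (see \cite{CLO}), the previous step forces $u$ and $v$ to be multiples of Aubin--Talenti bubbles. Writing $U_\tau(x)=\big(\tau/(\tau^{2}+|x-\bar x|^{2})\big)^{(N-2)/2}$, the Riesz identity $\int_{\R^N}(1+|y|^{2})^{-(N-s)}|x-y|^{-2s}\,dy=I(s)(1+|x|^{2})^{-s}$ (for $0<s<N/2$) gives, with $s=2$, that $|x|^{-4}\ast U_\tau^{2}=I(2)\,U_\tau^{4/(N-2)}$, and with $s=(N-2)/2$, that $-\Delta U_\tau=\big(R_N I(\tfrac{N-2}{2})\big)^{-1}U_\tau^{(N+2)/(N-2)}$. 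Substituting $u=C_1U_\tau$ and $v=C_2U_\tau$ into \eqref{eq3.1+} forces $v$ to share the centre $\bar x$ and dilation $\tau$ of $u$ (its inner potential must be proportional to that of $u$) and yields $\alpha_1 C_1^{2}+\beta C_2^{2}=\beta C_1^{2}+\alpha_2 C_2^{2}=\big(R_N I(2)I(\tfrac{N-2}{2})\big)^{-1}$. Solving this linear system (whose determinant $\alpha_1\alpha_2-\beta^{2}$ is nonzero since $\beta>\max\{\alpha_1,\alpha_2\}$) gives $C_1^{2}=k_0\big(R_N I(2)I(\tfrac{N-2}{2})\big)^{-1}$ and $C_2^{2}=l_0\big(R_N I(2)I(\tfrac{N-2}{2})\big)^{-1}$, which are positive because $k_0,l_0>0$ under the hypothesis; this is exactly the asserted form.

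The main obstacle is the moving-sphere step: proving the comparison estimates in integral form, with the correct sign for the kernel differences and with the nonlinear remainder uniformly controlled by $w_u$ and $w_v$, for the coupled system with its two nested nonlocal operators (the inner convolution with $|x|^{-4}$ and the outer Newtonian potential), and showing that equality in one component propagates to the other. The regularity and decay input of the first step and the explicit constant computation of the last step are comparatively routine.
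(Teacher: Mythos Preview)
Your proposal is correct and follows essentially the same moving-sphere-in-integral-form route as the paper (the paper's Lemmas~\ref{lem3.3}--\ref{lem3.5} for the comparison estimates and propagation of equality, Lemma~\ref{lem3.6} for the calculus step, and the identity~\eqref{eq3.18} to pin down $C_1,C_2$). The only minor deviations are cosmetic or local: you run the comparison on the exterior of the sphere while the paper works on the punctured interior (equivalent by Kelvin duality), and you exclude $\bar\lambda(x_0)=\infty$ by contradicting $u\in L^{2N/(N-2)}$ directly via the lower bound $u\gtrsim|x-x_0|^{-(N-2)/2}$, whereas the paper uses a two-case dichotomy (Lemma~\ref{lem3.6}(ii) if every $\lambda_{x_0}=\infty$, otherwise a clash between the finite limit of $|x|^{N-2}u(x)$ coming from one center and the infinite limit coming from another)---your version is a touch more direct because it exploits the finite-energy hypothesis, while the paper's version does not need it.
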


To study \eqref{eq1.7} using variational methods we introduce the energy functional
$$
\aligned
\mathcal J(u,v)&=
\frac{1}{2}\int_{\R^N}\big(|\nabla u|^{2}+|\nabla v|^{2}+V_1(x)u^{2}+ V_2(x)v^{2}\big)dx\\
&\quad\,-\frac{1}{4}\int_{\R^N}\int_{\R^N}
\frac{\alpha_1|u(x)|^{2}|u(y)|^{2}+\alpha_2|v(x)|^{2}|v(y)|^{2}
	+2\beta|u(x)|^{2}|v(y)|^{2}}{|x-y|^{4}}dxdy,
\endaligned
$$
In view of the Hardy-Littlewood-Sobolev inequality, the functional $\mathcal J$ is of class $\mathcal{C}^{1}$ on the Hilbert space $H:=D^{1,2}(\mathbb{R}^{N})\times D^{1,2}(\mathbb{R}^{N})$. Critical points of $\mathcal J$ are weak solutions of \eqref{eq1.7}.
Consider the infimum
\begin{equation}\label{e-inf}
c = \inf_{(u,v)\in\tilde{\mathcal{N}}}\mathcal J(u, v),
\end{equation}
where $\tilde{\mathcal{N}} = \{(u, v)\in H : (u, v) \neq (0, 0),\,\langle \mathcal J'(u, v),(u, v)\rangle=0\}$
is the Nehari manifold of $\mathcal J$.
A ground state solution of \eqref{eq1.7} is {\it by definition} a minimizer of \eqref{e-inf}.

In the case $V_1=V_2=0$, the analogues of $\mathcal J$ and $c$ are
denoted by $\mathcal J_{\infty}$ and $c_{\infty}$ respectively. We prove in Lemma \ref{lem2.3}
that if $\beta>\max\{\alpha_1,\alpha_2\}$, then
$$
c_{\infty}=\frac{1}{4}(k_{0}+l_{0})S_{H,L}^{2}.
$$
This estimate, combined with the uniqueness result of Theorem \ref{thm3.1+} implies that
{\em every finite energy positive classical solution of the limit system \eqref{eq3.1+} is a ground state solution} (see Corollary \ref{cor3.2}).

We also prove in Lemma \ref{lem2.4}
that if $V_j\neq0$ for some $j\in\{1,2\}$ and $\beta>\max\{\alpha_1,\alpha_2\}$ then
$$c=c_\infty$$
and the infimum in \eqref{e-inf} is not attained, i.e.~\eqref{eq1.7} does not have ground state solutions.

The main result of this paper is the following.

\begin{thm}\label{thm1.2}
Let $N\geq5$ and $\beta>\max\{\alpha_1,\alpha_2\}\geq\min\{\alpha_1,\alpha_2\}>0$.
If $V_1,\,V_2\in L^{\frac{N}{2}}(\R^N)\,\cap\, L_{\text{\rm loc}}^{\infty}(\R^N)$
are nonnegative functions satisfying
\begin{equation}\label{eq1.10}
\aligned
0
&<\frac{\beta-\alpha_2}{2\beta-\alpha_1-\alpha_2}C(N, 4)^{-\frac12} |V_1|_{L^{N/2}(\R^N)}
+\frac{\beta-\alpha_1}{2\beta-\alpha_1-\alpha_2}C(N, 4)^{-\frac12} |V_2|_{L^{N/2}(\R^N)}\\
&\hspace{4mm}<\min\bigg\{\sqrt{\frac{\beta^{2}-\alpha_1\alpha_2}{\alpha_1(2\beta-\alpha_1-\alpha_2)}},
\sqrt{\frac{\beta^{2}-\alpha_1\alpha_2}{\alpha_2(2\beta-\alpha_1-\alpha_2)}},\sqrt{2}\bigg\}
S_{H,L}-S_{H,L},
\endaligned
\end{equation}
then system \eqref{eq1.7} has a positive solution $(u,v)\in H$ such that
$$c_\infty=c<\mathcal J(u,v)\le\min\Big\{\frac{S_{H,L}^{2}}{4\alpha_1}, \frac{S_{H,L}^{2}}{4\alpha_2}, 2c_{\infty}\Big\}.$$
\end{thm}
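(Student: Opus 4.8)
The plan is to produce a critical point of $\mathcal{J}$ at an energy level lying strictly between $c_\infty$ and $c^{**}:=\min\{\tfrac{S_{H,L}^{2}}{4\alpha_1},\tfrac{S_{H,L}^{2}}{4\alpha_2},2c_\infty\}$, following the linking/deformation scheme of Benci--Cerami \cite{BC} in the nonlocal form developed in \cite{GSYZ, LL1}. By Lemma \ref{lem2.4} the infimum $c=c_\infty$ is not attained, so any such critical point is necessarily nontrivial. To obtain a \emph{positive} solution I would carry out the whole argument for the modified functional $\mathcal{J}_{+}$ obtained by replacing $u^{2}$ and $v^{2}$ by $(u^{+})^{2}$ and $(v^{+})^{2}$ in the nonlocal terms of $\mathcal{J}$; testing the associated Euler--Lagrange equations against $u^{-}$ and $v^{-}$, whose contribution to the (modified) right-hand sides vanishes because of the factors $u^{+},v^{+}$, forces $u^{-}=v^{-}=0$, so every critical point of $\mathcal{J}_{+}$ is a nonnegative solution of \eqref{eq1.7}, and the strong maximum principle gives $u,v>0$ once both components are known to be nonzero. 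The limit system \eqref{eq3.1+}, the constant $c_\infty$, and the value of $c^{**}$ are unaffected by this modification.

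\emph{Step 1 (compactness in the window $(c_\infty,c^{**})$).} Let $(u_n,v_n)\subset\tilde{\mathcal{N}}$ be a Palais--Smale sequence for $\mathcal{J}$ at a level $c\in(c_\infty,c^{**})$. It is bounded, its weak limit $(u_0,v_0)$ is a (sign-definite) solution of \eqref{eq1.7}, and the nonlocal global compactness lemma expresses the sequence, along a subsequence, as $(u_0,v_0)$ plus finitely many translated and rescaled nontrivial solutions $w_1,\dots,w_k$ of \eqref{eq3.1+}, with
\[
c=\mathcal{J}(u_0,v_0)+\sum_{j=1}^{k}\mathcal{J}_\infty(w_j).
\]
Each term is $\ge 0$, and in fact $\ge c_\infty$ when nonzero: if $(u_0,v_0)\neq(0,0)$ then $(u_0,v_0)\in\tilde{\mathcal{N}}$, hence $\mathcal{J}(u_0,v_0)\ge\inf_{\tilde{\mathcal{N}}}\mathcal{J}=c_\infty$; and $\mathcal{J}_\infty(w_j)\ge c_\infty$ because by Corollary \ref{cor3.2} a ground state of \eqref{eq3.1+} has energy exactly $c_\infty$, while if $w_j$ is semitrivial then testing the associated scalar Choquard equation against $w_j$ and using the definition of $S_{H,L}$ gives $\mathcal{J}_\infty(w_j)\ge\tfrac{S_{H,L}^{2}}{4\alpha_i}\ge c^{**}$. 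Comparing with $c<c^{**}\le 2c_\infty$ excludes $k\ge 1$ altogether, so $k=0$ and $(u_n,v_n)\to(u_0,v_0)$ in $H$. The same arithmetic shows that a critical point of $\mathcal{J}$ at a level in $(c_\infty,c^{**})$ has both components nonzero, since a nonzero semitrivial solution already has energy $\ge\tfrac{S_{H,L}^{2}}{4\alpha_i}\ge c^{**}$. Thus $\mathcal{J}$ (equivalently $\mathcal{J}_{+}$) satisfies $(PS)_c$ for every $c\in(c_\infty,c^{**})$, and any critical point found there is a genuine solution with both components nonzero.

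\emph{Step 2 (bubbles, and the role of \eqref{eq1.10}, and the linking).} Let $W$ be the Hardy--Littlewood--Sobolev extremal, $W_{\tau,y}(x)=\tau^{-(N-2)/2}W((x-y)/\tau)$, so that by Theorem \ref{thm3.1+} the functions $U_{\tau,y}=(C_1W_{\tau,y},C_2W_{\tau,y})$ are the ground states of \eqref{eq3.1+}; projecting radially onto $\tilde{\mathcal{N}}$ gives $\Phi_\tau(y):=t_{\tau,y}U_{\tau,y}\in\tilde{\mathcal{N}}$. Since the quartic nonlocal form is invariant under translations and dilations, while $\|U_{\tau,y}\|_{H}^{2}$ acquires only the extra term $\varepsilon_{\tau,y}:=C_1^{2}\int_{\R^N}V_1W_{\tau,y}^{2}+C_2^{2}\int_{\R^N}V_2W_{\tau,y}^{2}\ge0$, a direct computation gives $\mathcal{J}(\Phi_\tau(y))=c_\infty\big(1+\tfrac{\varepsilon_{\tau,y}}{4c_\infty}\big)^{2}$. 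Here $\varepsilon_{\tau,y}>0$ for every $y$ (as $V_1,V_2\ge0$ are not both identically zero by \eqref{eq1.10}), $\varepsilon_{\tau,y}\to0$ as $|y|\to\infty$, and the Hardy--Littlewood--Sobolev inequality gives $\sup_{y}\varepsilon_{\tau,y}\le\big(C_1^{2}|V_1|_{L^{N/2}(\R^N)}+C_2^{2}|V_2|_{L^{N/2}(\R^N)}\big)|W|_{L^{2N/(N-2)}(\R^N)}^{2}$ uniformly in $\tau$. Using $c_\infty=\tfrac14(k_0+l_0)S_{H,L}^{2}$, $S_{H,L}=S/\sqrt{C(N,4)}$ and the explicit $C_1,C_2$, one checks that \eqref{eq1.10} is equivalent to $c_\infty<s_\tau:=\sup_{y}\mathcal{J}(\Phi_\tau(y))<c^{**}$ for every $\tau>0$, with $s_\tau$ attained. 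Now suppose, for contradiction, that $\mathcal{J}$ has no critical point at any level in $(c_\infty,c^{**})$; then by Step 1 the deformation lemma applies on $[c_\infty+\delta,c^{**}-\delta']$, so for small $\delta,\delta'>0$ the sublevel set $\mathcal{J}^{c^{**}-\delta'}=\{(u,v)\in\tilde{\mathcal{N}}:\mathcal{J}\le c^{**}-\delta'\}$ deformation retracts, fixing $\mathcal{J}^{c_\infty+\delta}$, onto $\mathcal{J}^{c_\infty+\delta}$, whence $H_{N}(\mathcal{J}^{c^{**}-\delta'},\mathcal{J}^{c_\infty+\delta})=0$. On the other hand, the concentration behaviour of low-energy elements of $\tilde{\mathcal{N}}$ --- which again follows from the global compactness lemma and Theorem \ref{thm3.1+}, the only way to approach $c_\infty$ being to converge, after translation and dilation, to a ground-state profile of \eqref{eq3.1+} --- allows one to build a continuous barycenter map $\beta$ on $\mathcal{J}^{c_\infty+\delta_0}$ (for $\delta_0$ small) with $\beta(\Phi_\tau(y))=y$. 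Choosing $\tau$ and $\delta'$ with $s_\tau<c^{**}-\delta'$ and $\delta<\delta_0$, the compactified bubble map $\Phi_\tau$ represents (via $\Phi_\tau(\R^N)\subset\mathcal{J}^{s_\tau}\subset\mathcal{J}^{c^{**}-\delta'}$ and $\mathcal{J}(\Phi_\tau(y))\to c_\infty$) a class in $H_{N}(\mathcal{J}^{c^{**}-\delta'},\mathcal{J}^{c_\infty+\delta})$ that is detected --- by $\beta$ composed with the retraction, mapping it to a generator of the top homology of a pair $(\R^N,\R^N\setminus\{\text{pt}\})$ --- to be nonzero, a contradiction. Hence $\mathcal{J}$ has a critical point $(u,v)$ with $c_\infty<\mathcal{J}(u,v)<c^{**}$; by Step 1 both its components are nonzero, by the sign property of $\mathcal{J}_{+}$ they are nonnegative, and by the strong maximum principle they are positive. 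Since $c=c_\infty$ by Lemma \ref{lem2.4}, this yields $c_\infty=c<\mathcal{J}(u,v)\le\min\{\tfrac{S_{H,L}^{2}}{4\alpha_1},\tfrac{S_{H,L}^{2}}{4\alpha_2},2c_\infty\}$.

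\emph{Main obstacle.} The arithmetic of Step 2 (the energy identity for $\Phi_\tau$ and the matching of constants with \eqref{eq1.10}), together with the sign and regularity arguments, is routine; the smallness hypothesis \eqref{eq1.10} is used only to guarantee $s_\tau<c^{**}$. The genuinely hard parts are: (i) the nonlocal global compactness lemma and, within it, the lower bound $\mathcal{J}_\infty(w_j)\ge c_\infty$ for each bubble, which is precisely where the classification Theorem \ref{thm3.1+} (through Corollary \ref{cor3.2}) is indispensable; and (ii) the construction of the barycenter map on a sublevel set just above $c_\infty$ and the proof that the bubble family realizes a nontrivial relative homology class there --- in particular establishing that in the range $(0,c^{**})$ the Palais--Smale condition for $\mathcal{J}$ fails only at $c_\infty$, and correctly tracking the bubble scale when comparing a low-energy element of $\tilde{\mathcal{N}}$ with the ground-state profile of \eqref{eq3.1+}. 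This topological step is the crux of the proof.
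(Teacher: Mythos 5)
Your proposal follows the paper's high-level strategy faithfully: pass to the modified functional with positive parts, prove a nonlocal global compactness decomposition, use the classification (Theorem~\ref{thm3.1+} via Corollary~\ref{cor3.2}) to read off the energies of the bubbles and deduce $(PS)_d$ for $d\in(c_\infty,c^{**})$, compute the energy of projected bubbles, and then extract a critical point via a topological argument. Step~1 is essentially Corollary~\ref{cor4.3}, and the bubble-energy identity $\mathcal J(\Phi_\tau(y))=c_\infty\big(1+\varepsilon_{\tau,y}/(4c_\infty)\big)^2$ and its comparison with the hypothesis on $|V_1|_{N/2},|V_2|_{N/2}$ match the paper's estimate~\eqref{e5.11} (though the hypothesis is a \emph{sufficient} condition for $s_\tau<c^{**}$, not ``equivalent to'' it as you write, since the H\"older step is not sharp).

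Where you diverge is the topological step, and this is also where your argument is incomplete. The paper does not argue by contradiction through relative homology of sublevel sets; instead it builds an explicit constraint set $\mathcal M=\{(u,v)\in\mathcal N:\beta(u,v)=0,\ \gamma(u,v)=\tfrac12\}$ using \emph{two} auxiliary functionals, the barycenter $\beta$ and a concentration functional $\gamma$. Lemma~\ref{lem5.1} then shows $c^\star:=\inf_{\mathcal M}J>c_\infty$, and Lemmas~\ref{lem5.3}--\ref{lem5.6} set up a finite-dimensional degree computation for the $(N+1)$-parameter bubble family $(\delta,z)\mapsto\Theta\circ T(\delta,z)$ which establishes the linking $\mathcal M\cap h(\mathbb A)\ne\emptyset$, producing a minimax level in $(c_\infty,c^{**})$ directly. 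Your sketch omits the concentration functional entirely and works with a one-parameter-in-scale family at a fixed $\tau$; this is a real gap, not a stylistic choice. The constraint $\gamma=\tfrac12$ is what rules out, in Lemma~\ref{lem5.1}, the degenerate cases $\delta_n\to0$ and $\delta_n\to\infty$ where a sequence in $\mathcal M$ could otherwise drift to the energy floor $c_\infty$; a homological argument that controls only the translation variable has to confront the same degeneracy but you give no mechanism to handle it. Relatedly, the assertion that one can arrange $\beta(\Phi_\tau(y))=y$ exactly is incorrect; the barycenter of a bubble centered at $y$ is only close to (and on the same side as) $y$, which is precisely why the paper establishes a degree identity via the homotopy in Lemma~\ref{lem5.5} rather than asserting an equality. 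Finally, the crux --- that the bubble family represents a nonzero class in $H_N(\mathcal J^{c^{**}-\delta'},\mathcal J^{c_\infty+\delta})$ --- is stated as the goal but not proved, and you flag it as such; the paper's Lemmas~\ref{lem5.1}--\ref{lem5.6} are exactly the content needed there. So the approach is sound in outline and correct through Step~1, but Step~2 as written does not constitute a proof.
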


The main difficulties in the proof of Theorem \ref{thm1.2} are the loss of compactness and the need to distinguish the solutions from the semitrivial ones. To overcome the loss of compactness we will follow the idea of Struwe \cite{Sm} to establish a novel nonlocal version of the global compactness lemma for the critical Hartree system (Lemma \ref{lem4.2}). This result is more delicate than the lemma obtained in \cite{GSYZ} for the scalar equation, since the proofs in this work rely heavily on the classification of positive solutions of the limit system established in Theorem \ref{thm3.1+}.

\begin{Rem}
{\rm The proofs in this paper can also be adapted to the general form of the critical Hartree system
\begin{equation}\label{equation2}
\left\{\begin{array}{ll}
-\Delta u+ V_1(x)u
=\alpha_1\big(|x|^{-\mu}\ast |u|^{2_{\mu}^{\ast}}\big)|u|^{2_{\mu}^{\ast}-2}u+\beta \big(|x|^{-\mu}\ast |v|^{2_{\mu}^{\ast}}\big)|u|^{2_{\mu}^{\ast}-2}u
&\mbox{in}\ \R^N,\\[1mm]
-\Delta v+ V_2(x)v
= \alpha_2\big(|x|^{-\mu}\ast |v|^{2_{\mu}^{\ast}}\big)|v|^{2_{\mu}^{\ast}-2}v +\beta\big(|x|^{-\mu}\ast u^{2_{\mu}^{\ast}}\big)|v|^{2_{\mu}^{\ast}-2}v
&\mbox{in}\ \R^N,
\end{array}\right.
\end{equation}
where $N\geq3$, $N>\mu>0$,
$\beta>\frac{N-\mu+2}{N-2}\max\{\alpha_1,\alpha_2\}\geq\min\{\alpha_1,\alpha_2\}>0$,
and $V_1,\,V_2\in L^{\frac{N}{2}}(\R^N)\cap L_{\text{loc}}^{\infty}(\R^N)$
are nonnegative potential functions with small $L^{\frac{N}{2}}$ norms.}
\end{Rem}

Throughout this paper, we will use the following notations.
\begin{itemize}
\item The standard norm in the Sobolev space $D^{1,2}(\mathbb{R}^{N})$
is given by
$$\|u\|:=\Big(\int_{\mathbb{R}^N}|\nabla u|^{2}dx\Big)^{\frac12}.$$

\item Set $H:=D^{1,2}(\mathbb{R}^{N})\times D^{1,2}(\mathbb{R}^{N})$
equipped with the norm
$$\|(u,v)\|:=\Big(\int_{\mathbb{R}^N}(|\nabla u|^{2}+|\nabla v|^{2})dx\Big)^{\frac12}.$$

\item The standard norm in $L^{q}(\Omega)$ is denoted by $|\cdot|_{q,\Omega}$
and by $|\cdot|_{q}$ if $\Omega=\R^N$.

\item $o(1)$ means a quantity which tends to 0.

\item $c, C_j, C_N, C(\cdot)$ stand for various positive constants whose exact
values are irrelevant.
\end{itemize}

The paper is organized as follows. In Section 2, we give some preliminary
results. In Section 3, we prove a uniqueness result for limit system by the
method of moving spheres. Section 4 is devoted to the proof of a nonlocal
global compactness lemma and Theorem \ref{thm1.2} is proved in Section 5.

\section{Preliminaries}
To prove the existence of positive solutions of \eqref{eq1.7}, we will study the
modified system
\begin{equation}\label{eq2.1}
\left\{\begin{array}{ll}
\displaystyle-\Delta u+ V_1(x)u
=\alpha_1\big(|x|^{-4}\ast |u^{+}|^{2}\big)u^{+}+\beta\big(|x|^{-4}\ast |v^{+}|^{2}\big)u^{+}
&\mbox{in}\ \R^N,\\[1mm]
\displaystyle -\Delta v+ V_2(x)v
= \alpha_2\big(|x|^{-4}\ast |v^{+}|^{2}\big)v^{+}+\beta\big(|x|^{-4}\ast |u^{+}|^{2}\big)v^{+}
&\mbox{in}\ \R^N,
\end{array}\right.
\end{equation}
where $u^{+}=\max\{u,0\}$ and $v^{+}=\max\{v,0\}$. In fact, if $(u, v)$ is
a nontrivial solution of \eqref{eq2.1}, then $u(x)>0$ and $v(x)>0$ for all $x\in\mathbb R^N$ by the strong maximum principle,
which implies that $(u, v)$ is a positive solution of \eqref{eq1.7}. Therefore, we only need to prove the
existence of a nontrivial solution of \eqref{eq2.1}.

Semitrivial solutions of \eqref{eq2.1} are closely related to solutions of the
single elliptic equation
$$
-\Delta u+V_{i}(x)u=\alpha_{i}\big(|x|^{-4}\ast |u^{+}|^{2}\big)u^{+}\ \
\mbox{in}\ \R^N,
$$
of which the associated functional $I_i : D^{1,2}(\mathbb R^N)\to\mathbb R$
is defined by
$$
I_{i}(u)=\frac{1}{2}\int_{\R^N}\big(|\nabla u|^{2}+ V_{i}(x)u^{2}\big)dx
-\frac{\alpha_{i}}{4}\int_{\R^N}\int_{\R^N}
\frac{|u^{+}(x)|^{2}|u^{+}(y)|^{2}}{|x-y|^{4}}dxdy.
$$
Consider the infimum
$$
c_{i} = \inf_{u\in\mathcal{M}_{i}}I_{i}(u),
$$
where
$\mathcal{M}_{i} = \{u\in D^{1,2}(\R^{N}) : u \neq 0,\,\langle I_{i}'(u), u\rangle=0\}$.
In the case $V_i= 0$, we denote the analogues of $I_{i}$, $c_{i}$, $\mathcal{M}_{i}$
by $I_{i\infty}$, $c_{i\infty}$, $\mathcal{M}_{i\infty}$ respectively.

For $\delta>0$ and $z\in\R^N$, we denote
$$
U_{\delta,z}(x)=C_{N}\Big(\frac{\delta}{\delta^{2}+|x-z|^{2}}\Big)^{\frac{N-2}{2}}
$$
where $C_N=S^{-\frac{N-4}{4}}C(N, 4)^{-\frac12}[N(N-2)]^{\frac{N-2}{4}}$.
Then we have
$$
\int_{\R^N}|\nabla U_{\delta,z}|^{2}dx
=\int_{\R^N}\int_{\R^N}\frac{U^{2}_{\delta,z}(x) U^{2}_{\delta,z}(y)}{|x-y|^{4}}dxdy
=S_{H,L}^{2}
$$
and, according to \cite[Theorem 1.3]{DY}, the set
$\{U_{\delta,z} : \delta>0, z\in\R^N\}$ contains all positive solutions of
$$
-\Delta u=\big(|x|^{-4}\ast u^{2}\big)u\ \ \hbox{in}\ \R^N.
$$
It is easy to verify that $c_{i\infty}=\frac{1}{4\alpha_{i}}S_{H,L}^{2}$.

\begin{lem}\label{lem2.1}
If $V_i\in L^{\frac{N}{2}}(\R^N)$ is nonnegative, then
$c_{i}=c_{i\infty}=\frac{1}{4\alpha_{i}}S_{H,L}^{2}$.
\end{lem}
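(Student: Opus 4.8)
The plan is to establish the two inequalities $c_i\ge c_{i\infty}$ and $c_i\le c_{i\infty}$ separately, exploiting that $I_i$ and $I_{i\infty}$ carry the same quartic nonlinearity, so that an element of either Nehari manifold can be rescaled onto the other. First note that for $u\in\mathcal M_i$ the constraint $\langle I_i'(u),u\rangle=0$ reads
$$
\int_{\R^N}\big(|\nabla u|^{2}+V_i(x)u^{2}\big)dx
=\alpha_i\int_{\R^N}\int_{\R^N}\frac{|u^{+}(x)|^{2}|u^{+}(y)|^{2}}{|x-y|^{4}}dxdy,
$$
so that $I_i(u)=\tfrac14\int_{\R^N}(|\nabla u|^{2}+V_i u^{2})dx$; in particular $u^{+}\not\equiv0$ and both sides of the identity above are positive. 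Similarly $I_{i\infty}(u)=\tfrac14\|u\|^{2}$ for $u\in\mathcal M_{i\infty}$.

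For the bound $c_i\ge c_{i\infty}$ I would rescale onto $\mathcal M_{i\infty}$. Given $u\in\mathcal M_i$, there is a unique $t=t(u)>0$ with $tu\in\mathcal M_{i\infty}$, determined by
$$
t^{2}\,\alpha_i\int_{\R^N}\int_{\R^N}\frac{|u^{+}(x)|^{2}|u^{+}(y)|^{2}}{|x-y|^{4}}dxdy=\|u\|^{2},
$$
and since $V_i\ge0$ the Nehari identity for $u$ forces $\|u\|^{2}\le\alpha_i\int_{\R^N}\int_{\R^N}\frac{|u^{+}(x)|^{2}|u^{+}(y)|^{2}}{|x-y|^{4}}dxdy$, hence $t\le1$. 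Therefore
$$
c_{i\infty}\le I_{i\infty}(tu)=\tfrac14\,t^{2}\|u\|^{2}\le\tfrac14\|u\|^{2}\le\tfrac14\int_{\R^N}\big(|\nabla u|^{2}+V_i u^{2}\big)dx=I_i(u),
$$
and taking the infimum over $u\in\mathcal M_i$ yields $c_{i\infty}\le c_i$. In particular, since $c_{i\infty}=\tfrac1{4\alpha_i}S_{H,L}^{2}>0$, the number $c_i$ is finite and positive.

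For the reverse inequality $c_i\le c_{i\infty}$ I would use the bubbles $U_{1,z}$ and send $z$ to infinity to annihilate the potential contribution. Fix $z_n\in\R^N$ with $|z_n|\to\infty$. The decisive estimate is
$$
\int_{\R^N}V_i(x)U_{1,z_n}^{2}\,dx\longrightarrow0\qquad(n\to\infty),
$$
proved by splitting $\R^N=B_R(z_n)\cup\big(\R^N\setminus B_R(z_n)\big)$ and applying H\"older's inequality with exponents $\tfrac N2$ and $\tfrac N{N-2}$: on the complement one obtains the bound $|V_i|_{N/2}\,|U_{1,0}|_{2N/(N-2),\,\R^N\setminus B_R(0)}^{2}$, which is small uniformly in $n$ once $R$ is large because $U_{1,0}\in L^{2N/(N-2)}(\R^N)$; on $B_R(z_n)$ one obtains $|V_i|_{N/2,\,B_R(z_n)}\,|U_{1,0}|_{2N/(N-2)}^{2}$, and $|V_i|_{N/2,\,B_R(z_n)}\to0$ as $|z_n|\to\infty$ by the absolute continuity of $\int|V_i|^{N/2}$. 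Now choose $t_n>0$ with $t_nU_{1,z_n}\in\mathcal M_i$; using $\|U_{1,z_n}\|^{2}=\int_{\R^N}\int_{\R^N}\frac{U_{1,z_n}^{2}(x)U_{1,z_n}^{2}(y)}{|x-y|^{4}}dxdy=S_{H,L}^{2}$ this reads
$$
t_n^{2}=\frac{S_{H,L}^{2}+\int_{\R^N}V_i U_{1,z_n}^{2}\,dx}{\alpha_i S_{H,L}^{2}}\longrightarrow\frac1{\alpha_i},
$$
and consequently
$$
c_i\le I_i(t_nU_{1,z_n})=\tfrac14\,t_n^{2}\Big(S_{H,L}^{2}+\int_{\R^N}V_i U_{1,z_n}^{2}\,dx\Big)\longrightarrow\tfrac1{4\alpha_i}S_{H,L}^{2}=c_{i\infty}.
$$
Combining the two bounds gives $c_i=c_{i\infty}=\tfrac1{4\alpha_i}S_{H,L}^{2}$.

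The only step that requires genuine care — and where I expect the main obstacle to lie — is the vanishing $\int_{\R^N}V_i U_{1,z_n}^{2}\to0$: since the $L^{2N/(N-2)}$-norm of $U_{1,z}$ is invariant under translations (and dilations), H\"older's inequality alone produces no decay, and one has to combine the concentration of the mass of $U_{1,z}$ near the point $z$ with the equi-integrability of $|V_i|^{N/2}$ to conclude that a bubble placed far out is not seen by the potential. Everything else reduces to routine rescaling computations on the two Nehari manifolds.
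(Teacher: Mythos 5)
Your proposal is correct and follows essentially the same route as the paper: both directions are obtained by rescaling onto the relevant Nehari manifold, the inequality $c_i\ge c_{i\infty}$ from $t\le1$ because $V_i\ge0$, and the inequality $c_i\le c_{i\infty}$ by pushing a bubble $U_{1,z_n}$ to infinity and showing $\int V_i U_{1,z_n}^2\to0$ via H\"older together with the smallness of $\int|V_i|^{N/2}$ outside a large ball. The only cosmetic difference is that you split the integral as $B_R(z_n)\cup(\R^N\setminus B_R(z_n))$ while the paper splits as $B_r(0)\cup(\R^N\setminus B_r(0))$ and controls $\int_{B_r(0)}U_{1,z_n}^{2N/(N-2)}$ by translating the bubble to the origin; the two decompositions are equivalent and both close the argument.
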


\begin{proof}[\bf Proof.]
For $u \in \mathcal{M}_{i}$, let $t_u > 0$ be such that
$t_uu \in\mathcal{M}_{i\infty}$. Since $V_i(x)\geq0$ for $x\in\R^N$, we
have
$$
t_u^{2}
=\frac{\|u\|^{2}}{\displaystyle\alpha_{i}\int_{\R^N}\int_{\R^N}
\frac{|u^{+}(x)|^{2}|u^{+}(y)|^{2}}{|x-y|^{4}}dxdy}
\leq\frac{\displaystyle\|u\|^{2}+\int_{\R^N}V_{i}(x)u^{2}dx}{\displaystyle\alpha_{i}\int_{\R^N}\int_{\R^N}
\frac{|u^{+}(x)|^{2}|u^{+}(y)|^{2}}{|x-y|^{4}}dxdy}
=1.
$$
Then
$$
c_{i\infty}\leq I_{i\infty}(t_uu)=\frac{1}{4} t_u^{2}\|u\|^{2}
\leq\frac{1}{4}\Big(\|u\|^{2}+\int_{\R^N} V_{i}(x)u^{2}dx\Big)
=I_{i}(u),
$$
which implies that $c_{i}\geq c_{i\infty}$.

To prove the reverse inequality, let $t_n > 0$ be given by
$$
t_n^{2}
=\frac{\displaystyle\|U_{1,z_{n}}\|^{2}+\int_{\R^N}V_{i}(x)U_{1,z_{n}}^{2}dx}
{\displaystyle\alpha_{i}\int_{\R^N}\int_{\R^N}
\frac{U_{1,z_{n}}^{2}(x) U_{1,z_{n}}^{2}(y)}{|x-y|^{4}}dxdy},
$$
then we have $t_n U_{1,z_{n}}\in\mathcal{M}_{i}$. Therefore
$$
\aligned
c_{i}\leq I_{i}(t_nU_{1,z_{n}})
&=\frac{1}{4} t_n^{2}\Big(\|U_{1,z_{n}}\|^{2}
+\int_{\R^N} V_{i}(x)U_{1,z_{n}}^{2}dx\Big).
\endaligned
$$
Since $V_i\in L^{\frac{N}{2}}(\R^N)$, for any $\varepsilon>0$
there exists a number $r=r(\varepsilon)>0$ such that
$$
\Big(\int_{\R^N\backslash B_r(0)}|V_i|^{\frac{N}{2}}dx\Big)^{\frac{2}{N}}
<\varepsilon.
$$
For such $r$, let $\{z_n\}\subset\R^{N}$ be a sequence such that
$\lim_{n\to\infty}|z_n|=+\infty$, then we can find
$n_{0}\in\mathbb N$ such that
$$
\Big(\int_{B_r(0)}U_{1,z_{n}}^{\frac{2N}{N-2}}dx\Big)^{\frac{N-2}{N}}
=\Big(\int_{B_r(-z_{n})}U_{1,0}^{\frac{2N}{N-2}}dx\Big)^{\frac{N-2}{N}}
<\varepsilon,\ \ \text{for}\ n\geq n_{0}.
$$
Then the H\"older inequality leads to
$$
\aligned
\Big|\int_{\R^N}V_i(x)U_{1,z_{n}}^{2}dx\Big|
&=\Big|\int_{B_r(0)}V_i(x)U_{1,z_{n}}^{2}dx
+\int_{\R^N\backslash B_r(0)}V_i(x)U_{1,z_{n}}^{2}dx\Big|\\
&\leq|V_i|_{\frac{N}{2}}\Big(\int_{B_r(0)}U_{1,z_{n}}^{\frac{2N}{N-2}}dx\Big)^{\frac{N-2}{N}}
+\Big(\int_{\R^N\backslash B_r(0)}|V_i|^{\frac{N}{2}}dx\Big)^{\frac{2}{N}}
\Big(\int_{\R^{N}}U_{1,0}^{\frac{2N}{N-2}}dx\Big)^{\frac{N-2}{N}}\\
&<C\varepsilon
\endaligned
$$
for $n\geq n_{0}$, which implies that
\begin{equation}\label{eq2.2}
\lim_{n\rightarrow\infty}\int_{\R^N}V_i(x)U_{1,z_{n}}^{2}dx=0.
\end{equation}
Moreover, by the definition of $t_n$, we know
$t_n^{2}=\frac{1}{\alpha_{i}}+o_{n}(1)$ as $n\rightarrow\infty$. Then
$$
\aligned
c_{i}\leq I_{i}(t_nU_{1,z_{n}})
=\frac{1}{4}\Big(\frac{1}{\alpha_{i}}+o_{n}(1)\Big)\big(S_{H,L}^{2}+o_{n}(1)\big)
=\frac{1}{4\alpha_{i}}S_{H,L}^{2}+o_{n}(1)
\endaligned
$$
and so $c_{i}\leq\frac{1}{4\alpha_{i}}S_{H,L}^{2}=c_{i\infty}$. Combining
this with $c_{i}\geq c_{i\infty}$ , we get the desired conclusion.
\end{proof}

To study \eqref{eq2.1} by variational methods, we define the energy
functional $J: H\to\mathbb R$ by
$$
\aligned
J(u,v)&=
\frac{1}{2}\int_{\R^N}\big(|\nabla u|^{2}+|\nabla v|^{2}+V_1(x)u^{2}+ V_2(x)v^{2}\big)dx\\
&\quad\,-\frac{1}{4}\int_{\R^N}\int_{\R^N}
\frac{\alpha_1|u^{+}(x)|^{2}|u^{+}(y)|^{2}+\alpha_2|v^{+}(x)|^{2}|v^{+}(y)|^{2}
+2\beta|u^{+}(x)|^{2}|v^{+}(y)|^{2}}{|x-y|^{4}}dxdy.
\endaligned
$$
In view of the Hardy-Littlewood-Sobolev inequality, the functional $J$ is well
defined and belongs to $\mathcal{C}^{1}(H, \mathbb R)$. Then we see that
$(u,v)$ is a weak solution of \eqref{eq2.1} if and only if $(u,v)$ is a critical
point of the functional $J$.

Consider the infimum
$$
c = \inf_{(u,v)\in\mathcal{N}}J(u, v),
$$
where
$$\mathcal{N} = \{(u, v)\in H : (u, v) \neq (0, 0),\,\langle J'(u, v),(u, v)\rangle=0\}.$$
In the case $V_1=V_2=0$, the analogues of $J$, $c$, $\mathcal{N}$ will be
denoted by $J_{\infty}$, $c_{\infty}$, $\mathcal{N}_{\infty}$ respectively. As
is known, critical points of the functional $J_{\infty}$ correspond to weak
solutions of the coupled system
\begin{equation}\label{eq2.3}
\left\{\begin{array}{ll}
-\Delta u
=\alpha_1\big(|x|^{-4}\ast |u^{+}|^{2}\big)u^{+}+\beta\big(|x|^{-4}\ast |v^{+}|^{2}\big)u^{+}
&\mbox{in}\ \R^N,\\[1mm]
-\Delta v
=\alpha_2\big(|x|^{-4}\ast |v^{+}|^{2}\big)v^{+}+\beta\big(|x|^{-4}\ast |u^{+}|^{2}\big)v^{+}
&\mbox{in}\ \R^N.
\end{array}
\right.
\end{equation}

The next lemma is proved in \cite{LL1}.

\begin{lem}\label{lem2.2}
If $\beta>\max\{\alpha_1,\alpha_2\}$ and $f : [0,+\infty)\rightarrow\R$ is
defined by
$$
f(t)=\frac{(t+1)^{2}}{\alpha_1t^{2}+2\beta t+\alpha_2},
$$
then $\min_{t\geq0}f(t)=k_{0}+l_{0}$, where
$k_{0}=\frac{\beta-\alpha_2}{\beta^{2}-\alpha_1\alpha_2}$
and $l_{0}=\frac{\beta-\alpha_1}{\beta^{2}-\alpha_1\alpha_2}$.
\end{lem}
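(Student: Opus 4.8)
The plan is a short one-variable calculus argument. First I would record two elementary facts. For $t\ge 0$ the denominator $g(t):=\alpha_1 t^2+2\beta t+\alpha_2$ satisfies $g(t)\ge\alpha_2>0$, so $f$ is well defined and of class $\mathcal{C}^{1}$ on $[0,+\infty)$; and since $\beta>\max\{\alpha_1,\alpha_2\}$ we have $\beta^{2}>\alpha_1\alpha_2$, so $\beta^{2}-\alpha_1\alpha_2>0$ and $k_0,l_0$ are positive and well defined.

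Next I would differentiate. Using $g(t)-(t+1)(\alpha_1 t+\beta)=(\beta-\alpha_1)t-(\beta-\alpha_2)$, a direct computation gives
$$
f'(t)=\frac{2(t+1)\big[(\beta-\alpha_1)t-(\beta-\alpha_2)\big]}{g(t)^{2}}.
$$
On $[0,+\infty)$ both $t+1$ and $g(t)^{2}$ are positive and $\beta-\alpha_1>0$, so $f'(t)$ has the sign of $t-t_0$, where $t_0:=\frac{\beta-\alpha_2}{\beta-\alpha_1}$; moreover $t_0>0$ because $\beta>\alpha_2$. Hence $f$ is strictly decreasing on $[0,t_0]$ and strictly increasing on $[t_0,+\infty)$, so $t_0$ is the unique minimiser and $\min_{t\ge 0}f(t)=f(t_0)$.

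Finally I would evaluate $f(t_0)$. Setting $a:=\beta-\alpha_1>0$ and $b:=\beta-\alpha_2>0$ (so $t_0=b/a$, $\alpha_1=\beta-a$, $\alpha_2=\beta-b$), one has $(t_0+1)^{2}=(a+b)^{2}/a^{2}$ and, after expanding,
$$
g(t_0)=\frac{\alpha_1 b^{2}+2\beta ab+\alpha_2 a^{2}}{a^{2}}=\frac{(a+b)\big(\beta(a+b)-ab\big)}{a^{2}}=\frac{(a+b)(\beta^{2}-\alpha_1\alpha_2)}{a^{2}},
$$
where the last equality uses $\beta(a+b)-ab=\beta(2\beta-\alpha_1-\alpha_2)-(\beta-\alpha_1)(\beta-\alpha_2)=\beta^{2}-\alpha_1\alpha_2$. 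Therefore
$$
f(t_0)=\frac{a+b}{\beta^{2}-\alpha_1\alpha_2}=\frac{2\beta-\alpha_1-\alpha_2}{\beta^{2}-\alpha_1\alpha_2}=k_0+l_0,
$$
which is the assertion. The only mildly delicate point is the algebra in the evaluation of $g(t_0)$; the substitution $a=\beta-\alpha_1$, $b=\beta-\alpha_2$ makes the factorisation $\alpha_1 b^{2}+2\beta ab+\alpha_2 a^{2}=(a+b)(\beta^{2}-\alpha_1\alpha_2)$ transparent, and the rest of the argument is routine.
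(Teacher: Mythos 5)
Your proof is correct and complete: the differentiation, the sign analysis giving the unique minimiser $t_0=\frac{\beta-\alpha_2}{\beta-\alpha_1}>0$, and the algebraic evaluation $f(t_0)=\frac{2\beta-\alpha_1-\alpha_2}{\beta^2-\alpha_1\alpha_2}=k_0+l_0$ all check out. The paper does not reproduce a proof of this lemma (it simply cites \cite{LL1}), and your one-variable calculus argument is exactly the standard route one would expect there, so nothing further is needed.
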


\begin{lem}\label{lem2.3}
If $\beta>\max\{\alpha_1,\alpha_2\}$, then we have
$$
c_{\infty}=\frac{1}{4}(k_{0}+l_{0})S_{H,L}^{2}
$$
and any least energy solution of \eqref{eq2.3} must be of the form
$$
(\sqrt{k_{0}}U_{\delta,z},\sqrt{l_{0}}U_{\delta,z})
$$
for some $\delta> 0$ and $z\in \R^{N}$, where again
$k_{0}=\frac{\beta-\alpha_2}{\beta^{2}-\alpha_1\alpha_2}$
and $l_{0}=\frac{\beta-\alpha_1}{\beta^{2}-\alpha_1\alpha_2}$.
\end{lem}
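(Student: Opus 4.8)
The plan is to view $c_{\infty}$ as a constrained minimisation. On the Nehari manifold $\mathcal{N}_{\infty}$ one has $\langle J_{\infty}'(u,v),(u,v)\rangle=0$, so $\|(u,v)\|^{2}=D(u,v)$, where $D(u,v)$ is the quartic term of $J_{\infty}$, and hence $J_{\infty}(u,v)=\tfrac14\|(u,v)\|^{2}=\tfrac14 D(u,v)$. First I would bound $D$ from above. Writing $\langle f,g\rangle_{*}=\int_{\R^{N}}\int_{\R^{N}}\frac{f(x)g(y)}{|x-y|^{4}}\,dxdy$, the Hardy--Littlewood--Sobolev inequality makes this a nonnegative symmetric bilinear form, so Cauchy--Schwarz gives $\langle u^{2},v^{2}\rangle_{*}\le\langle u^{2},u^{2}\rangle_{*}^{1/2}\langle v^{2},v^{2}\rangle_{*}^{1/2}$; combined with $0\le u^{+}\le|u|$ and the defining inequality $\langle w^{2},w^{2}\rangle_{*}\le S_{H,L}^{-2}\|w\|^{4}$ for $w=u,v$, this yields $D(u,v)\le S_{H,L}^{-2}\big(\alpha_{1}\|u\|^{4}+\alpha_{2}\|v\|^{4}+2\beta\|u\|^{2}\|v\|^{2}\big)$. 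Setting $a=\|u\|^{2}$, $b=\|v\|^{2}$ and using the Nehari identity $a+b=D(u,v)$ together with $\alpha_{1}a^{2}+\alpha_{2}b^{2}+2\beta ab=(a+b)^{2}/f(a/b)$ and Lemma \ref{lem2.2}, I obtain $a+b\le S_{H,L}^{-2}(a+b)^{2}/(k_{0}+l_{0})$, hence $\|(u,v)\|^{2}\ge(k_{0}+l_{0})S_{H,L}^{2}$ and $c_{\infty}\ge\tfrac14(k_{0}+l_{0})S_{H,L}^{2}$ (the degenerate cases $a=0$ or $b=0$ being handled separately via $k_{0}+l_{0}\le\min\{1/\alpha_{1},1/\alpha_{2}\}$, which follows from $\beta>\alpha_{1},\alpha_{2}$).

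For the matching upper bound I would test with $(\sqrt{k_{0}}\,U_{\delta,z},\sqrt{l_{0}}\,U_{\delta,z})$. From the definitions of $k_{0},l_{0}$ one has the identities $\alpha_{1}k_{0}+\beta l_{0}=1=\beta k_{0}+\alpha_{2}l_{0}$, which show directly that this pair solves \eqref{eq2.3}, lies on $\mathcal{N}_{\infty}$, and (since $\|U_{\delta,z}\|^{2}=S_{H,L}^{2}$) has energy exactly $\tfrac14(k_{0}+l_{0})S_{H,L}^{2}$. Thus $c_{\infty}\le\tfrac14(k_{0}+l_{0})S_{H,L}^{2}$, which together with the lower bound gives the asserted formula for $c_{\infty}$.

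For the classification, let $(u,v)$ be a least energy solution of \eqref{eq2.3}. Testing the two equations against $u^{-}$ and $v^{-}$ gives $u,v\ge0$, and neither component can vanish, since a semitrivial solution would have energy $\tfrac{1}{4\alpha_{i}}S_{H,L}^{2}>\tfrac14(k_{0}+l_{0})S_{H,L}^{2}=c_{\infty}$ (strict because $\beta>\alpha_{i}$). Since $J_{\infty}(u,v)=c_{\infty}$, every inequality in the first paragraph must be an equality. Equality in $\langle w^{2},w^{2}\rangle_{*}\le S_{H,L}^{-2}\|w\|^{4}$ for $w=u$ and $w=v$ forces, by the extremal characterisation of $S_{H,L}$ (\cite{GY}; cf.\ Proposition \ref{pro1.1} and \cite{DY}), $u=c_{1}U_{\delta_{1},z_{1}}$ and $v=c_{2}U_{\delta_{2},z_{2}}$ with $c_{1},c_{2}>0$; equality in Cauchy--Schwarz forces $u^{2}$ proportional to $v^{2}$, hence $(\delta_{1},z_{1})=(\delta_{2},z_{2})=:(\delta,z)$ and $c_{1}^{2}/c_{2}^{2}=a/b$; and equality in Lemma \ref{lem2.2} forces $a/b=k_{0}/l_{0}$, the unique minimiser of $f$. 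Finally, substituting $u=c_{1}U_{\delta,z}$, $v=c_{2}U_{\delta,z}$ into $\|(u,v)\|^{2}=D(u,v)$ and using $\alpha_{1}k_{0}^{2}+2\beta k_{0}l_{0}+\alpha_{2}l_{0}^{2}=k_{0}+l_{0}$ pins down $c_{1}^{2}=k_{0}$, $c_{2}^{2}=l_{0}$, whence $(u,v)=(\sqrt{k_{0}}\,U_{\delta,z},\sqrt{l_{0}}\,U_{\delta,z})$.

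The hard part is really just the bookkeeping of the equality cases in the last step — in particular checking that equality in the Cauchy--Schwarz inequality for the Riesz bilinear form forces the two profiles to share a common centre and dilation — and keeping the elementary algebraic identities relating $k_{0},l_{0},\alpha_{1},\alpha_{2},\beta$ straight; the rest is a direct consequence of Lemma \ref{lem2.2}, the identity $\|U_{\delta,z}\|^{2}=S_{H,L}^{2}$, and the known classification of extremals.
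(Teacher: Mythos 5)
Your proof is correct, and it follows essentially the same strategy as the paper: test $(\sqrt{k_0}U_{\delta,z},\sqrt{l_0}U_{\delta,z})$ for the upper bound, reduce the lower bound to the one-variable minimisation in Lemma~\ref{lem2.2}, and recover the classification by tracking equality cases. The one genuine (and modest) difference is in the lower bound: the paper introduces $t=|u|_{2^*}^2/|v|_{2^*}^2$ and chains the Sobolev inequality with the Hardy--Littlewood--Sobolev inequality (eq.~\eqref{eq2.4}), whereas you parametrise by $a=\|u\|^2$, $b=\|v\|^2$, bound the cross term by Cauchy--Schwarz for the Riesz bilinear form, and then invoke the defining inequality $\langle w^2,w^2\rangle_* \le S_{H,L}^{-2}\|w\|^4$ on each diagonal factor. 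The two routes are equivalent since $S_{H,L}=S/\sqrt{C(N,4)}$ packages exactly the paper's Sobolev$+$HLS pair; your version buys a cleaner ledger of equality cases in the classification step (proportionality of $u^2$ and $v^2$ from Cauchy--Schwarz, the bubble form from the $S_{H,L}$ extremal characterisation, and $a/b=k_0/l_0$ from Lemma~\ref{lem2.2}), where the paper's argument is terser and leaves these to the reader. Your verification of $k_0+l_0<1/\alpha_i$ for the degenerate cases and the algebraic identity $\alpha_1k_0^2+2\beta k_0l_0+\alpha_2l_0^2=k_0+l_0$ used to pin down $c_1,c_2$ are both correct.
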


\begin{proof}[\bf Proof.]
Firstly, we show that $c_{\infty}=\frac{1}{4}(k_{0}+l_{0})S_{H,L}^{2}$. Since
$(\sqrt{k_{0}}U_{\delta,z}, \sqrt{l_{0}}U_{\delta,z})\in\mathcal{N}_{\infty}$,
we have
$$
c_{\infty}
\leq J_{\infty}(\sqrt{k_{0}}U_{\delta,z}, \sqrt{l_{0}}U_{\delta,z})
=\frac{1}{4}(k_{0}+l_{0})S_{H,L}^{2}.
$$
To prove the reverse inequality, let $(u, v)\in\mathcal{N}_{\infty}$ and
assume without loss of generality that $v\neq 0$. Set
$$
t=\frac{|u|_{2^{\ast}}^{2}}{|v|_{2^{\ast}}^{2}}\geq 0,
$$
where $2^{\ast}=\frac{2N}{N-2}$. Then, by the Sobolev inequality and
Proposition \ref{pro1.1},
$$
\aligned
S(t+1)|v|_{2^{\ast}}^{2}
&=S|u|_{2^{\ast}}^{2}+S|v|_{2^{\ast}}^{2}\\
&\leq\int_{\mathbb R^N}\big(|\nabla u|^{2}+|\nabla v|^{2}\big)dx\\
&=\int_{\R^N}\int_{\R^N}
\frac{\alpha_1|u^{+}(x)|^{2}|u^{+}(y)|^{2}+\alpha_2|v^{+}(x)|^{2}|v^{+}(y)|^{2}
+2\beta|u^{+}(x)|^{2}|v^{+}(y)|^{2}}{|x-y|^{4}}dxdy\\
&\leq C(N,4)\big(\alpha_1|u|_{2^{\ast}}^{4}
+\alpha_2|v|_{2^{\ast}}^{4}+2\beta |u|_{2^{\ast}}^{2}|v|_{2^{\ast}}^{2}\big)\\
&=C(N,4)\big(\alpha_1 t^{2}+2\beta t+\alpha_2\big)|v|_{2^{\ast}}^{4}
\endaligned
$$
and so
$$
|v|_{2^{\ast}}^{2}\geq\frac{S(t+1)}{C(N,4)\big(\alpha_1t^{2}+2\beta t+\alpha_2\big)}.
$$
Noting that $(u, v)\in\mathcal{N}_{\infty}$, by Lemma \ref{lem2.2} and
\eqref{eq1.9}, we obtain
\begin{equation}\label{eq2.4}
\aligned
J_{\infty}(u,v)
&=\frac14 \int_{\mathbb R^N}\big(|\nabla u|^{2}+|\nabla v|^{2}\big)dx\\
&\geq\frac{1}{4}S(t+1)|v|_{2^{\ast}}^{2}\\
&\geq\frac{S^{2}(t+1)^{2}}{4C(N,4)\big(\alpha_1t^{2}+2\beta t+\alpha_2\big)}\\
&\geq\frac{1}{4}(k_{0}+l_{0})S_{H,L}^{2}.
\endaligned
\end{equation}
Then $c_{\infty}\geq\frac{1}{4}(k_{0}+l_{0})S_{H,L}^{2}$ and we conclude
that $c_{\infty}=\frac{1}{4}(k_{0}+l_{0})S_{H,L}^{2}$.

Secondly, we prove the uniqueness of least energy solutions up to translation
and dilation. On one hand, if $(u,v)\in\mathcal{N}_{\infty}$ and either
$u\neq a_{1}U_{\delta,z}$ or $v\neq a_{2}U_{\delta,z}$, where $a_{1}\neq 0$,
$a_{2}\neq 0$, $\delta>0$ and $z\in \R^{N}$ are parameters, then we
see from \eqref{eq2.4} and
$$
c_{i\infty}=\frac{1}{4\alpha_{i}}S_{H,L}^{2}>\frac{1}{4}(k_{0}+l_{0})S_{H,L}^{2},\ \ i=1, 2
$$
that $J_{\infty}(u,v)>\frac{1}{4}(k_{0}+l_{0})S_{H,L}^{2}$. On the other hand,
if $(u, v)=(a_{1}U_{\delta,z},a_{2}U_{\delta,z})\in\mathcal{N}_{\infty}$ and
$J_{\infty}(u,v)=c_{\infty}$, then there must be $a_{1}=\sqrt{k_{0}}$ and
$a_{2}=\sqrt{l_{0}}$. Therefore, any least energy solution of \eqref{eq2.3}
must be of the form
$$
(\sqrt{k_{0}}U_{\delta,z},\sqrt{l_{0}}U_{\delta,z})
$$
for some $\delta> 0$ and $z\in \R^{N}$. The proof is complete.
\end{proof}

\begin{lem}\label{lem2.4}
If $\beta>\max\{\alpha_1,\alpha_2\}$ and
$V_1,\,V_2\in L^{\frac{N}{2}}(\R^{N})$ are nonnegative functions satisfying
\begin{equation}\label{eq2.5}
|V_1|_{\frac{N}{2}}+|V_2|_{\frac{N}{2}}>0,
\end{equation}
then $c =c_{\infty}$ and $c$ is not achieved.
\end{lem}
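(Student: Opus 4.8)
The plan is to first establish $c=c_\infty$ and then argue by contradiction that $c$ is not attained. For the inequality $c\ge c_\infty$ I would mimic the first half of the proof of Lemma~\ref{lem2.1}: given $(u,v)\in\mathcal N$, pick $t>0$ with $t(u,v)\in\mathcal N_\infty$; using the Nehari identities for $J$ and $J_\infty$ together with $V_1,V_2\ge0$ one finds $t^2\le1$, whence
$$
c_\infty\le J_\infty(t(u,v))=\tfrac14 t^2\|(u,v)\|^2\le\tfrac14\Big(\|(u,v)\|^2+\int_{\R^N}(V_1u^2+V_2v^2)\,dx\Big)=J(u,v).
$$
For the reverse inequality I would test with translated least energy solutions of the limit system furnished by Lemma~\ref{lem2.3}: set $w_n=(\sqrt{k_0}\,U_{1,z_n},\sqrt{l_0}\,U_{1,z_n})\in\mathcal N_\infty$ with $|z_n|\to\infty$, and let $s_n>0$ be such that $s_nw_n\in\mathcal N$; arguing as in \eqref{eq2.2} one has $\int_{\R^N}V_iU_{1,z_n}^2\,dx\to0$, so (using also $w_n\in\mathcal N_\infty$) $s_n\to1$ and therefore
$$
c\le J(s_nw_n)=\tfrac14 s_n^2\Big(\|w_n\|^2+\int_{\R^N}(k_0V_1+l_0V_2)U_{1,z_n}^2\,dx\Big)\to\tfrac14(k_0+l_0)S_{H,L}^2.
$$
Combined with Lemma~\ref{lem2.3} this gives $c=c_\infty=\tfrac14(k_0+l_0)S_{H,L}^2$.

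Now suppose, for contradiction, that $(u,v)\in\mathcal N$ attains $c$. Being a minimizer of $J$ on the Nehari manifold, a natural $\mathcal C^1$ constraint for $J$, $(u,v)$ is a free critical point of $J$, hence a weak solution of \eqref{eq2.1}. I first rule out that the minimizer is semitrivial: if $v\equiv0$, then $u\in\mathcal M_1$ and $J(u,0)=I_1(u)\ge c_1=\tfrac1{4\alpha_1}S_{H,L}^2$ by Lemma~\ref{lem2.1}; but $\tfrac1{4\alpha_1}S_{H,L}^2>\tfrac14(k_0+l_0)S_{H,L}^2=c$, the strict inequality $c_{i\infty}>c_\infty$ being exactly the one used in the proof of Lemma~\ref{lem2.3} and valid since $\beta>\max\{\alpha_1,\alpha_2\}$ by Lemma~\ref{lem2.2}, and this contradicts $J(u,0)=c$; the case $u\equiv0$ is symmetric. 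Hence $u\not\equiv0$ and $v\not\equiv0$, so by the strong maximum principle (as recorded at the start of this section) $u>0$ and $v>0$ in $\R^N$.

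To finish I would exploit positivity. Write $a=\|(u,v)\|^2>0$ and $b=\int_{\R^N}(V_1u^2+V_2v^2)\,dx\ge0$. Since $(u,v)\in\mathcal N$ we have $J(u,v)=\tfrac14(a+b)$, and choosing $t>0$ with $t(u,v)\in\mathcal N_\infty$ gives $t^2=a/(a+b)$, so that
$$
c_\infty\le J_\infty(t(u,v))=\tfrac14 t^2a=\frac{a^2}{4(a+b)}.
$$
Together with $\tfrac14(a+b)=J(u,v)=c=c_\infty$ this forces $a^2\ge(a+b)^2$, hence $b=0$, i.e.\ $\int_{\R^N}(V_1u^2+V_2v^2)\,dx=0$; since $u,v>0$ everywhere and $V_1,V_2\ge0$, this means $V_1\equiv V_2\equiv0$, contradicting \eqref{eq2.5}. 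I expect the delicate point to be the exclusion of semitrivial minimizers, which hinges on the strict ordering $c_{i\infty}>c_\infty$ (a consequence of $\beta>\max\{\alpha_1,\alpha_2\}$ via Lemma~\ref{lem2.2}) and on making rigorous that a minimizer on $\mathcal N$ is a genuine critical point so that the strong maximum principle can be applied.
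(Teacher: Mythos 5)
Your proof is correct, and for the identity $c=c_\infty$ it follows the paper's argument essentially verbatim. For the non-attainment part you reach the same two crucial facts as the paper does --- that $\int_{\R^N}(V_1u^2+V_2v^2)\,dx=0$ and that $u>0$, $v>0$ everywhere --- but via a genuinely different route for positivity. You treat $\mathcal N$ as a natural constraint to get a free critical point of $J$, exclude the semitrivial cases $(u,0)$ and $(0,v)$ by comparing $c=\tfrac14(k_0+l_0)S_{H,L}^2$ with $c_i=\tfrac{1}{4\alpha_i}S_{H,L}^2$ (the strict gap $\tfrac{1}{\alpha_i}>k_0+l_0$ being a rewriting of $(\beta-\alpha_i)^2>0$), and then invoke the strong maximum principle for the system \emph{with the potential} $V_i$. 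The paper instead observes that the algebra forcing $t_{(u,v)}=1$ and $\int(V_1u^2+V_2v^2)\,dx=0$ simultaneously makes $(u,v)\in\mathcal N_\infty$ with $J_\infty(u,v)=c_\infty$, so $(u,v)$ is a least energy (hence, by the natural constraint property, genuine) solution of the \emph{limit} system \eqref{eq2.3}; Lemma~\ref{lem2.3} then gives the explicit form $(\sqrt{k_0}\,U_{\delta,z},\sqrt{l_0}\,U_{\delta,z})$, which is manifestly positive. The paper's detour through the limit system is worth noticing: it avoids entirely the issue of applying the strong maximum principle to $-\Delta u+V_1u\ge0$ when the stated hypothesis of Lemma~\ref{lem2.4} is only $V_i\in L^{N/2}(\R^N)$ (no local boundedness is assumed in this lemma, unlike in Theorem~\ref{thm1.2}), and it also disposes of the semitrivial alternatives automatically, since the classification result already tells you both components are nonzero. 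Your argument is sound in the context in which the lemma is actually applied (where $V_i\in L^\infty_{\mathrm{loc}}$ is available), and your explicit exclusion of semitrivial minimizers is a correct and instructive alternative, but it is a little heavier than needed and formally relies on more regularity of $V_i$ than the lemma's own hypotheses provide.
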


\begin{proof}[\bf Proof.]
Using the arguments in the proof of Lemma \ref{lem2.1}, one can prove
$c\geq c_{\infty}=\frac{1}{4}(k_{0}+l_{0})S_{H,L}^{2}$. We shall show that
the equality holds indeed. Let us consider the sequence
$\{z_{n}\}\subset\R^{N}$ satisfying $|z_{n}|\rightarrow+\infty$ as
$n\rightarrow\infty$. By the claim in the proof of Lemma \ref{lem2.1}, we have
$$
\lim_{n\rightarrow\infty}\int_{\R^N}
\big(k_{0} V_1(x)U_{1,z_{n}}^{2}+l_{0} V_2(x)U_{1,z_{n}}^{2}\big)dx=0.
$$
For $t_n > 0$ defined by
$$
t_n^{2}=\frac{\displaystyle\|(\sqrt{k_{0}}U_{1,z_{n}},\sqrt{l_{0}}U_{1,z_{n}})\|^{2}
+\int_{\R^N}\big(k_{0} V_1(x)U_{1,z_{n}}^{2}+l_{0} V_2(x)U_{1,z_{n}}^{2}\big)dx}
{\displaystyle(k_{0}+l_{0})\int_{\R^N}\int_{\R^N}
\frac{U^{2}_{1,z_{n}}(x) U^{2}_{1,z_{n}}(y)}{|x-y|^{4}}dxdy},
$$
we have $(t_n\sqrt{k_{0}}U_{1,z_{n}},t_n\sqrt{l_{0}}U_{1,z_{n}})\in\mathcal{N}$
and $t_n^{2}=1+o_{n}(1)$ as $n\rightarrow\infty$. Then
$$
\aligned
c
&\leq J(t_n\sqrt{k_{0}}U_{1,z_{n}},t_n\sqrt{l_{0}}U_{1,z_{n}})\\
&=\frac{1}{4}t_n^{2}
\Big(\|(\sqrt{k_{0}}U_{1,z_{n}},\sqrt{l_{0}}U_{1,z_{n}})\|^{2}
+\int_{\R^N}\big(k_{0} V_1(x)U_{1,z_{n}}^{2}+l_{0} V_2(x)U_{1,z_{n}}^{2}\big)dx\Big)\\
&=\frac{1}{4}\big(1+o_{n}(1)\big)\big((k_{0}+l_{0})S_{H,L}^{2}+o_{n}(1)\big)\\
&=\frac{1}{4}(k_{0}+l_{0})S_{H,L}^{2}+o_{n}(1),
\endaligned
$$
which implies $c\leq\frac{1}{4}(k_{0}+l_{0})S_{H,L}^{2}=c_{\infty}$.
Therefore, we conclude that $c =c_{\infty}$.

We use an argument of contradiction to prove the nonexistence result.
Assume that $(u,v)\in\mathcal{N}$ satisfies
$$
J(u,v)=\frac{1}{4}
\Big(\|(u,v)\|^{2}+\int_{\R^N}\big(V_1(x)u^{2}+V_2(x)v^{2}\big)dx\Big)
=c.
$$
Let $t_{(u,v)}>0$ be such that $(t_{(u,v)}u, t_{(u,v)}v)\in\mathcal{N}_{\infty}$.
It is easy to verify that $t_{(u,v)}\leq 1$. Then
$$
c_{\infty}
\leq J_{\infty}(t_{(u,v)}u, t_{(u,v)}v)
=\frac{1}{4}t_{(u,v)}^{2}\|(u,v)\|^{2}\leq\frac{1}{4}
\Big(\|(u,v)\|^{2}+\int_{\R^N}\big(V_1(x)u^{2}+V_2(x)v^{2}\big)dx\Big)
=c=c_{\infty},
$$
which indicates that $t_{(u,v)}=1$ and
\begin{equation}\label{eq2.6}
\int_{\R^N}\big(V_1(x)u^{2}+V_2(x)v^{2}\big)dx=0.
\end{equation}
This means that $(u,v)$ is a least energy solution of \eqref{eq2.3}. By
Lemma \ref{lem2.3}, we know $u(x)>0$ and $v(x)>0$ for $x\in\R^N$. Combining
this with \eqref{eq2.6}, we have $V_1=V_2=0$ almost everywhere in $\R^N$,
which contradicts \eqref{eq2.5}. Therefore, the infimum $c$ is not attained.
\end{proof}

By Lemma \ref{lem2.4}, we know that \eqref{eq2.1} does not have a ground
state solution. Therefore, nontrivial solutions of \eqref{eq2.1} only at high
energy levels can be expected.

\section{Uniqueness for a limit problem}
This section is devoted to the classification of positive solutions for critical coupled Hartree system
\begin{equation}\label{eq3.1}
\left\{\begin{array}{ll}
-\Delta u=\alpha_1\big(|x|^{-4}\ast u^{2}\big)u+\beta \big(|x|^{-4}\ast v^{2}\big)u
&\mbox{in}\ \R^N,\\[1mm]
-\Delta v=\alpha_2\big(|x|^{-4}\ast v^{2}\big)v +\beta\big(|x|^{-4}\ast u^{2}\big)v
&\mbox{in}\ \R^N.
\end{array}\right.
\end{equation}
We will employ the Kelvin transformation and the method
of moving spheres in integral forms to complete the proof. See
\cite{CLO, DY, Lei} and references therein for uniqueness results for a
single elliptic equation.

Recall that $k_{0}=\frac{\beta-\alpha_2}{\beta^{2}-\alpha_1\alpha_2}$
and $l_{0}=\frac{\beta-\alpha_1}{\beta^{2}-\alpha_1\alpha_2}$ in Lemma \ref{lem2.3}. We also
denote $R_{N}=\frac14 \pi^{-\frac{N}{2}}\Gamma(\frac{N-2}{2})$ and, for
$0<s<\frac{N}{2}$,
$$
I(s)=\frac{\pi^{\frac{N}{2}}\Gamma(\frac{N-2s}{2})}{\Gamma(N-s)},
$$
where $\Gamma(s)=\int_0^{+\infty} x^{s-1}e^{-x}\,dx$, $s>0$.

According to \cite{CLO}, \eqref{eq3.1} is equivalent to the following
integral system in $\R^N$
\begin{equation}\label{eq3.2}
\left\{\begin{array}{l}
\displaystyle  u(x)
=\alpha_1R_{N}\int_{\R^N}\frac{u(y)w(y)}{|x-y|^{N-2}}dy
+\beta R_{N}\int_{\R^N}\frac{u(y)g(y)}{|x-y|^{N-2}}dy,\\[3mm]
\displaystyle v(x)
=\alpha_2R_{N}\int_{\R^N}\frac{v(y)g(y)}{|x-y|^{N-2}}dy
+\beta R_{N}\int_{\R^N}\frac{v(y)w(y)}{|x-y|^{N-2}}dy,\\[3mm]
\displaystyle  w(x)=\int_{\R^N}\frac{u^{2}(y)}{|x-y|^{4}}dy,\\[3mm]
\displaystyle  g(x)=\int_{\R^N}\frac{v^{2}(y)}{|x-y|^{4}}dy.
\end{array}
\right.
\end{equation}
Let $x_0\in\R^N$ and $\lambda> 0$. The inversion of
$x\in\R^N\backslash\{x_0\}$ about the sphere $\partial B_{\lambda}(x_0)$
is given by
$$
x_{x_0,\lambda}=\frac{\lambda^{2}(x-x_0)}{|x-x_0|^{2}}+x_0.
$$
Assume $(u, v, w, g)$ satisfies \eqref{eq3.2} and each component is
positive. We define the Kelvin transform of $u$ and $v$ with respect to
$\partial B_{\lambda}(x_0)$ by
$$
u_{x_0,\lambda}(x)=\Big(\frac{\lambda}{|x-x_0|}\Big)^{N-2}u(x_{x_0,\lambda}),\ \
v_{x_0,\lambda}(x)=\Big(\frac{\lambda}{|x-x_0|}\Big)^{N-2}v(x_{x_0,\lambda})
$$
and the Kelvin transform of $w$ and $g$ with respect to
$\partial B_{\lambda}(x_0)$ by
$$
w_{x_0,\lambda}(x)=\Big(\frac{\lambda}{|x-x_0|}\Big)^{4}w(x_{x_0,\lambda}),\ \
g_{x_0,\lambda}(x)=\Big(\frac{\lambda}{|x-x_0|}\Big)^{4}g(x_{x_0,\lambda}),
$$
respectively. Set
$$
U_{x_0,\lambda}=u_{x_0,\lambda}-u,\ \ V_{x_0,\lambda}=v_{x_0,\lambda}-v,\ \
W_{x_0,\lambda}=w_{x_0,\lambda}-w,\ \ G_{x_0,\lambda}=g_{x_0,\lambda}-g.
$$
When $x_0 = 0$, we will drop $x_0$ in the subscript of above notations and
write, for example,
$$
x_{\lambda}=\frac{\lambda^{2}x}{|x|^{2}},
\ \ u_{\lambda}(x)=\left(\frac{\lambda}{|x|}\right)^{N-2}u(x_{\lambda}),
\ \ U_{\lambda}=u_{\lambda}-u.
$$
Denoting
$$
\begin{array}{ll}
B_{\lambda}^{u}=\{x\in B_{\lambda}\backslash\{0\}:U_{\lambda}(x)<0\},
&B_{\lambda}^{v}=\{x\in B_{\lambda}\backslash\{0\}:V_{\lambda}(x)<0\},\\[2mm]
B_{\lambda}^{w}=\{x\in B_{\lambda}\backslash\{0\}:W_{\lambda}(x)<0\},
&B_{\lambda}^{g}=\{x\in B_{\lambda}\backslash\{0\}:G_{\lambda}(x)<0\},
\end{array}
$$
we have

\begin{lem}\label{lem3.3}
There exists a positive constant $C$ independent of $\lambda$ such that
\begin{equation}\label{eq3.3}
\aligned
|U_{\lambda}|_{2^\ast, B_{\lambda}^{u}}
\leq C\big(\alpha_1|u|^{2}_{2^\ast,B_{\lambda}^{u}}
&+\alpha_1|w|_{\frac{N}{2}, B_{\lambda}^{u}}
+\beta|g|_{\frac{N}{2}, B_{\lambda}^{u}}\big)
|U_{\lambda}|_{2^\ast, B_{\lambda}^{u}}\\[1mm]
&+C\beta|u|_{2^\ast, B_{\lambda}^{u}}|v|_{2^\ast, B_{\lambda}^{v}}
|V_{\lambda}|_{2^\ast, B_{\lambda}^{v}}
\endaligned
\end{equation}
and
\begin{equation}\label{eq3.4}
\aligned
|V_{\lambda}|_{2^\ast, B_{\lambda}^{v}}
\leq C\big(\alpha_2|v|^{2}_{2^\ast, B_{\lambda}^{v}}
&+\alpha_2|g|_{\frac{N}{2}, B_{\lambda}^{v}}
+\beta|w|_{\frac{N}{2},B_{\lambda}^{v}}\big)
|V_{\lambda}|_{2^\ast, B_{\lambda}^{v}}\\[1mm]
&+C\beta|u|_{2^\ast, B_{\lambda}^{u}}|v|_{2^\ast, B_{\lambda}^{v}}
|U_{\lambda}|_{2^\ast, B_{\lambda}^{u}},
\endaligned
\end{equation}
where $2^\ast=\frac{2N}{N-2}$.
\end{lem}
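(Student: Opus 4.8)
The plan is to derive the two integral inequalities by applying the Kelvin transform to the integral system \eqref{eq3.2}, subtracting the original equations, and then estimating the resulting integrals on the ``bad sets'' $B_\lambda^u, B_\lambda^v, B_\lambda^w, B_\lambda^g$ via the Hardy--Littlewood--Sobolev and H\"older inequalities. First I would record the conformal invariance of the Riesz kernels appearing in \eqref{eq3.2}: a standard change of variables $y\mapsto y_\lambda$ shows that $u_\lambda$ satisfies an integral identity of exactly the same shape as the first line of \eqref{eq3.2} but with $u,w,g$ replaced by $u_\lambda,w_\lambda,g_\lambda$ and with the domain of integration still all of $\R^N$; likewise for $v_\lambda$ and for the defining integrals of $w_\lambda$ and $g_\lambda$ in terms of $u_\lambda^2$ and $v_\lambda^2$. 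The key point is that the exponents are matched: $N-2$ and $4$ are precisely the powers for which $\big(\tfrac{\lambda}{|x|}\big)^{\ast}$ weights cancel correctly, so no extra factors survive.

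Next, subtracting, one gets for $x\in B_\lambda\setminus\{0\}$ a representation of the form
$$
U_\lambda(x)=\alpha_1 R_N\!\int_{\R^N}\!\Big(\tfrac{1}{|x-y|^{N-2}}-\tfrac{1}{|x_\lambda-y|^{N-2}}\tfrac{|x|^{2-N}}{\lambda^{2-N}}\cdots\Big)\big(u_\lambda w_\lambda-uw\big)(y)\,dy+\cdots,
$$
and the standard trick (as in \cite{CLO,DY,Lei}) is that, after folding the region $|y|\ge\lambda$ back into $B_\lambda$ via the inversion, the kernel difference is \emph{nonnegative} and dominated by $|x-y|^{2-N}$, while the integrand can be bounded pointwise on $B_\lambda$ by a sum of products such as $u(u_\lambda-u)^+$, $u_\lambda(w_\lambda-w)^+$, etc. Restricting to $B_\lambda^u$ (where $U_\lambda<0$) and using $u_\lambda\le u$ there, together with the analogous pointwise bounds for $W_\lambda$ and $G_\lambda$ in terms of $U_\lambda$ and $V_\lambda$ (coming from $w_\lambda-w$ being controlled by $\int |x-y|^{-4}(u+u_\lambda)|U_\lambda|$, hence by $\int|x-y|^{-4}u|U_\lambda|$ on the bad set), one arrives at a pointwise estimate for $|U_\lambda|$ on $B_\lambda^u$ as a convolution of $|x-y|^{2-N}$ against terms like $\alpha_1(u^2+w+g/\beta\cdot\beta)|U_\lambda|+\beta uv|V_\lambda|$, all supported on the bad sets.

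The final step is to apply the Hardy--Littlewood--Sobolev inequality with the pair $\big(\tfrac{2N}{N+2},\tfrac{2N}{N+2}\big)$ (the case $\mu=N-2$ of Proposition \ref{pro1.1}) to the convolution with $|x-y|^{2-N}$, which maps $L^{2N/(N+2)}$ into $L^{2N/(N-2)}=L^{2^\ast}$, and then H\"older's inequality to split each product, e.g.
$$
\big| u^2\,U_\lambda\big|_{\frac{2N}{N+2},B_\lambda^u}\le |u|^2_{2^\ast,B_\lambda^u}\,|U_\lambda|_{2^\ast,B_\lambda^u},\qquad
\big| g\,U_\lambda\big|_{\frac{2N}{N+2},B_\lambda^u}\le |g|_{\frac{N}{2},B_\lambda^u}\,|U_\lambda|_{2^\ast,B_\lambda^u},
$$
and $| uv\,V_\lambda|_{\frac{2N}{N+2},B_\lambda^u}\le |u|_{2^\ast,B_\lambda^u}|v|_{2^\ast,B_\lambda^v}|V_\lambda|_{2^\ast,B_\lambda^v}$ (here one uses that the bad set for the mixed term is contained in $B_\lambda^u\cup B_\lambda^v$ and enlarges the integration domains). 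Absorbing the constants $\alpha_1,\alpha_2,\beta,R_N$ and the HLS constant into a single $C$ yields \eqref{eq3.3}; the derivation of \eqref{eq3.4} is symmetric, exchanging the roles of $u$ and $v$ and of $\alpha_1$ and $\alpha_2$.

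I expect the main obstacle to be the bookkeeping in the second step: verifying that after the inversion the kernel differences have the right sign and the correct $|x-y|^{2-N}$ majorant, and that the auxiliary potentials $W_\lambda,G_\lambda$ on the bad sets really are controlled by $U_\lambda,V_\lambda$ with no loss in exponents. Once those pointwise inequalities are in place, the HLS/H\"older estimates that produce the stated form are routine.
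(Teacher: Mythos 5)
Your proposal takes essentially the same route as the paper's proof: Kelvin-transform the integral system \eqref{eq3.2}, subtract to obtain a representation of $U_\lambda$ over $B_\lambda$ with a sign-definite kernel difference, control the nonlinear differences pointwise, and close with Hardy--Littlewood--Sobolev and H\"older on the bad sets. The one place that needs care is the pointwise step: you write bounds in terms of $(u_\lambda-u)^+$ and $(w_\lambda-w)^+$, but those vanish on $B_\lambda^u$ where $U_\lambda<0$; the ingredient the argument actually requires is the \emph{lower} bound $u_\lambda w_\lambda-uw\ge wU_\lambda^-+uW_\lambda^-$ (and likewise with $g$ and $G_\lambda$), with $a^-=\min\{a,0\}$, which the paper establishes by a four-case sign analysis and then combines with $0\le \tfrac{1}{|x-y|^{N-2}}-\tfrac{1}{|\frac{|y|}{\lambda}x-\frac{\lambda}{|y|}y|^{N-2}}\le\tfrac{1}{|x-y|^{N-2}}$ on $B_\lambda$; once that inequality is in place, the HLS/H\"older splitting you describe, including converting $W_\lambda,G_\lambda$ back to $U_\lambda,V_\lambda$ via \eqref{eq3.8}--\eqref{eq3.11}, coincides with the paper's.
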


\begin{proof}[\bf Proof.]
Since $dy_\lambda=(\frac{\lambda}{|y|})^{2N}dy$, a direct computation
shows that
$$
\aligned
u(x)
&=\alpha_1R_{N}\Big(\int_{B_{\lambda}}\frac{u(y)w(y)}{|x-y|^{N-2}}dy
+\int_{\R^N\backslash B_{\lambda}}\frac{u(y)w(y)}{|x-y|^{N-2}}dy\Big)\\
&\quad\,+\beta R_{N}\Big(\int_{B_{\lambda}}\frac{u(y)g(y)}{|x-y|^{N-2}}dy
+\int_{\R^N\backslash B_{\lambda}}\frac{u(y)g(y)}{|x-y|^{N-2}}dy\Big)\\
&=\alpha_1R_{N}\Big(\int_{B_{\lambda}}\frac{u(y)w(y)}{|x-y|^{N-2}}dy
+\int_{B_{\lambda}}\frac{u_{\lambda}(y)w_{\lambda}(y)}
{\big|\frac{|y|}{\lambda}x-\frac{\lambda}{|y|}y\big|^{N-2}}dy\Big)\\
&\quad\,+\beta R_{N}\Big(\int_{B_{\lambda}}\frac{u(y)g(y)}{|x-y|^{N-2}}dy
+\int_{ B_{\lambda}}\frac{u_{\lambda}(y)g_{\lambda}(y)}
{\big|\frac{|y|}{\lambda}x-\frac{\lambda}{|y|}y\big|^{N-2}}dy\Big)
\endaligned
$$
and
$$
\aligned
u_{\lambda}(x)
&=\Big(\frac{\lambda}{|x|}\Big)^{N-2}u(x_{\lambda})\\
&=\alpha_1R_{N}\Big(\frac{\lambda}{|x|}\Big)^{N-2}
\Big(\int_{B_{\lambda}}\frac{u(y)w(y)}{|x_{\lambda}-y|^{N-2}}dy
+\int_{\R^N\backslash B_{\lambda}}\frac{u(y)w(y)}{|x_{\lambda}-y|^{N-2}}dy\Big)\\
&\quad\,+\beta R_{N}\Big(\frac{\lambda}{|x|}\Big)^{N-2}
\Big(\int_{B_{\lambda}}\frac{u(y)g(y)}{|x_{\lambda}-y|^{N-2}}dy
+\int_{\R^N\backslash B_{\lambda}}\frac{u(y)g(y)}{|x_{\lambda}-y|^{N-2}}dy\Big)\\
&=\alpha_1R_{N}\Big(\int_{B_{\lambda}}\frac{u(y)w(y)}
{\big|\frac{|y|}{\lambda}x-\frac{\lambda}{|y|}y\big|^{N-2}}dy
+\int_{B_{\lambda}}\frac{u_{\lambda}(y)w_{\lambda}(y)}{|x-y|^{N-2}}dy\Big)\\
&\quad\,+\beta R_{N}\Big(\int_{ B_{\lambda}}\frac{u(y)g(y)}
{\big|\frac{|y|}{\lambda}x-\frac{\lambda}{|y|}y\big|^{N-2}}dy
+\int_{B_{\lambda}}\frac{u_{\lambda}(y)g_{\lambda}(y)}{|x-y|^{N-2}}dy\Big),
\endaligned
$$
where we have used
$$
\left|x_{\lambda}-y\right|
=\frac{\lambda^{2}|x-y_{\lambda}|}{|x||y_{\lambda}|}
=\frac{|y|}{|x|}\bigg|x-\frac{\lambda^2}{|y|^2}y\bigg|
=\frac{\lambda}{|x|}\bigg|\frac{|y|}{\lambda}x-\frac{\lambda}{|y|}y\bigg|
$$
and
$$
|x_{\lambda}-y_{\lambda}|=\frac{\lambda^{2}|x-y|}{|x||y|}
$$
for $x,\,y \in\R^{N}\backslash\{0\}$. Then it follows
\begin{equation}\label{eq3.5}
\aligned
U_{\lambda}(x)
&=u_{\lambda}(x)-u(x)\\
&=\alpha_1R_{N}\int_{B_{\lambda}}\Big(\frac{1}{|x-y|^{N-2}}
-\frac{1}{\big|\frac{|y|}{\lambda}x-\frac{\lambda}{|y|}y\big|^{N-2}}\Big)
\times(u_{\lambda}(y)w_{\lambda}(y)-u(y)w(y))dy\\
&\quad\,+\beta R_{N}\int_{B_{\lambda}}\Big(\frac{1}{|x-y|^{N-2}}
-\frac{1}{\big|\frac{|y|}{\lambda}x-\frac{\lambda}{|y|}y\big|^{N-2}}\Big)
\times(u_{\lambda}(y)g_{\lambda}(y)-u(y)g(y))dy.
\endaligned
\end{equation}

Denoting $a^-=\min\{a, 0\}$, we claim
\begin{equation}\label{eq3.6}
u_{\lambda}w_{\lambda}-uw\geq wU_{\lambda}^{-}+uW_{\lambda}^{-}\ \ \text{and}\ \
u_{\lambda}g_{\lambda}-ug\geq gU_{\lambda}^{-}+uG_{\lambda}^{-}.
\end{equation}
Since the argument is analogous, we only prove the first inequality and
divide the discussion into four cases.

\noindent{\bf Case 1.} If $u_{\lambda}(y)\geq u(y)$ and
$w_{\lambda}(y)\geq w(y)$, then $U_{\lambda}^{-}(y)=W_{\lambda}^{-}(y)=0$
and so
$$
u_{\lambda}(y)w_{\lambda}(y)-u(y)w(y)
\geq0
=w(y)U_{\lambda}^{-}(y)+u(y)W_{\lambda}^{-}(y).
$$

\noindent{\bf Case 2.} If $u_{\lambda}(y)\geq u(y)$ and
$w_{\lambda}(y)< w(y)$, then $U_{\lambda}^{-}(y)=0$ and
$W_{\lambda}^{-}(y)=W_{\lambda}(y)$, which implies
that
$$
u_{\lambda}(y)w_{\lambda}(y)-u(y)w(y)
\geq u(y)W_{\lambda}(y)
=w(y)U_{\lambda}^{-}(y)+u(y)W_{\lambda}^{-}(y).
$$

\noindent{\bf Case 3.} If $u_{\lambda}(y)< u(y)$ and
$w_{\lambda}(y)\geq w(y)$, then $U_{\lambda}^{-}(y)=U_{\lambda}(y)$
and $W_{\lambda}^{-}(y)=0$, which implies that
$$
u_{\lambda}(y)w_{\lambda}(y)-u(y)w(y)
\geq w(y)U_{\lambda}(y)
=w(y)U_{\lambda}^{-}(y)+u(y)W_{\lambda}^{-}(y).
$$

\noindent{\bf Case 4.} If $u_{\lambda}(y)< u(y)$ and
$w_{\lambda}(y)< w(y)$, then $U_{\lambda}^{-}(y)=U_{\lambda}(y)\leq 0$
and $W_{\lambda}^{-}(y)=W_{\lambda}(y)\leq 0$. Hence
\begin{align*}
u_{\lambda}(y)w_{\lambda}(y)-u(y)w(y)
&=w_{\lambda}(y)U_{\lambda}(y)+u(y)W_{\lambda}(y)\\[1mm]
&\geq w(y)U_{\lambda}(y)+u(y)W_{\lambda}(y)
=w(y)U_{\lambda}^{-}(y)+u(y)W_{\lambda}^{-}(y).
\end{align*}

Using \eqref{eq3.5}, \eqref{eq3.6} and the fact that
$$
\left|\frac{|y|}{\lambda} x-\frac{\lambda}{|y|} y\right|^{2}-|x-y|^{2}
=\frac{\left(|x|^{2}-\lambda^{2}\right)\left(|y|^{2}-\lambda^{2}\right)}{\lambda^{2}}
>0,\ \ \text {for}\ x,\,y \in B_\lambda\backslash\{0\}
$$
leads to
$$
\aligned
U_{\lambda}(x)
&=\alpha_1R_{N}\int_{B_{\lambda}}\Big(\frac{1}{|x-y|^{N-2}}
-\frac{1}{\big|\frac{|y|}{\lambda}x-\frac{\lambda}{|y|}y\big|^{N-2}}\Big)
\times(u_{\lambda}(y)w_{\lambda}(y)-u(y)w(y))dy\\
&\quad\,+\beta R_{N}\int_{B_{\lambda}}\Big(\frac{1}{|x-y|^{N-2}}
-\frac{1}{\big|\frac{|y|}{\lambda}x-\frac{\lambda}{|y|}y\big|^{N-2}}\Big)
\times(u_{\lambda}(y)g_{\lambda}(y)-u(y)g(y))dy\\
&\geq\alpha_1R_{N}\int_{B_{\lambda}}\Big(\frac{1}{|x-y|^{N-2}}
-\frac{1}{\big|\frac{|y|}{\lambda}x-\frac{\lambda}{|y|}y\big|^{N-2}}\Big)
\times(w(y)U_{\lambda}^{-}(y)+u(y)W_{\lambda}^{-}(y))dy\\
&\quad\,+\beta R_{N}\int_{B_{\lambda}}\Big(\frac{1}{|x-y|^{N-2}}
-\frac{1}{\big|\frac{|y|}{\lambda}x-\frac{\lambda}{|y|}y\big|^{N-2}}\Big)
\times(g(y)U_{\lambda}^{-}(y)+u(y)G_{\lambda}^{-}(y))dy\\
&\geq \alpha_1R_{N}\Big(\int_{B_{\lambda}}\frac{w(y)U_{\lambda}^{-}(y)}{|x-y|^{N-2}}dy
+\int_{B_{\lambda}}\frac{u(y)W_{\lambda}^{-}(y)}{|x-y|^{N-2}}dy\Big)\\
&\quad\,+\beta R_{N}\Big(\int_{B_{\lambda}}\frac{g(y)U_{\lambda}^{-}(y)}{|x-y|^{N-2}}dy
+\int_{B_{\lambda}}\frac{u(y)G_{\lambda}^{-}(y)}{|x-y|^{N-2}}dy\Big).
\endaligned
$$
Invoking the Hardy-Littlewood-Sobolev inequality and the H\"{o}lder
inequality, we have
\begin{equation}\label{eq3.7}
\aligned
|U_{\lambda}|_{2^\ast, B_{\lambda}^{u}}
&\leq C\alpha_1|wU_{\lambda}|_{\frac{2N}{N+2}, B_{\lambda}^{u}}
+C\alpha_1|uW_{\lambda}|_{\frac{2N}{N+2}, B_{\lambda}^{u}\cap B_{\lambda}^{w}}\\[1mm]
&\quad\,+ C\beta|gU_{\lambda}|_{\frac{2N}{N+2}, B_{\lambda}^{u}}
+C\beta|uG_{\lambda}|_{\frac{2N}{N+2}, B_{\lambda}^{u}\cap B_{\lambda}^{g}}\\[1mm]
&\leq C\alpha_1|w|_{\frac{N}{2}, B_{\lambda}^{u}}|U_{\lambda}|_{2^\ast, B_{\lambda}^{u}}
+C\alpha_1|u|_{2^\ast, B_{\lambda}^{u}}|W_{\lambda}|_{\frac{N}{2}, B_{\lambda}^{w}}\\[1mm]
&\quad\,+C\beta|g|_{\frac{N}{2}, B_{\lambda}^{u}}|U_{\lambda}|_{2^\ast, B_{\lambda}^{u}}
+C\beta|u|_{2^\ast, B_{\lambda}^{u}}|G_{\lambda}|_{\frac{N}{2}, B_{\lambda}^{g}}.
\endaligned
\end{equation}

By a similar argument, we can deduce from \eqref{eq3.2} that
\begin{equation}\label{eq3.8}
W_{\lambda}(x)=\int_{B_{\lambda}}\Big(\frac{1}{|x-y|^{4}}
-\frac{1}{|\frac{|y|}{\lambda}x-\frac{\lambda}{|y|}y|^{4}}\Big)
\times(u_{\lambda}^{2}(y)-u^{2}(y))dy
\geq C\int_{B_{\lambda}}\frac{u(y)U_{\lambda}^{-}(y)}{|x-y|^{4}}dy
\end{equation}
and
\begin{equation}\label{eq3.9}
G_{\lambda}(x)=\int_{B_{\lambda}}\Big(\frac{1}{|x-y|^{4}}
-\frac{1}{|\frac{|y|}{\lambda}x-\frac{\lambda}{|y|}y|^{4}}\Big)
\times(v_{\lambda}^{2}(y)-v^{2}(y))dy
\geq C\int_{B_{\lambda}}\frac{v(y)V_{\lambda}^{-}(y)}{|x-y|^{4}}dy.
\end{equation}
Using the Hardy-Littlewood-Sobolev inequality and the H\"{o}lder inequality
again leads to
\begin{equation}\label{eq3.10}
|W_{\lambda}|_{\frac{N}{2}, B_{\lambda}^{w}}
\leq C|uU_{\lambda}|_{\frac{N}{N-2}, B_{\lambda}^{w}\cap B_{\lambda}^{u}}
\leq C|u|_{2^\ast, B_{\lambda}^{u}}|U_{\lambda}|_{2^\ast, B_{\lambda}^{u}}
\end{equation}
and
\begin{equation}\label{eq3.11}
|G_{\lambda}|_{\frac{N}{2}, B_{\lambda}^{g}}
\leq C |vV_{\lambda}|_{\frac{N}{N-2}, B_{\lambda}^{g}\cap B_{\lambda}^{v}}
\leq C |v|_{2^\ast, B_{\lambda}^{v}}|V_{\lambda}|_{2^\ast, B_{\lambda}^{v}}.
\end{equation}
Then \eqref{eq3.3} follows easily from \eqref{eq3.7}, \eqref{eq3.10} and \eqref{eq3.11}.
Similarly, one can prove \eqref{eq3.4}.
\end{proof}

\begin{lem}\label{lem3.4}
For any $x_0\in\R^N$, the sets
$$
\Gamma_{x_0}^u:=\{\lambda>0 : U_{x_0,\sigma}\geq 0\
\mbox{in}\ B_{\sigma}(x_0)\backslash\{x_0\} \ \mbox{for all }\sigma\in(0,\lambda]\}
$$
and
$$
\Gamma_{x_0}^v:=\{\lambda>0 : V_{x_0,\sigma}\geq0\
\mbox{in}\ B_{\sigma}(x_0)\backslash\{x_0\} \ \mbox{for all }\sigma\in(0,\lambda]\}
$$
are not empty.
\end{lem}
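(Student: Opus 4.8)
The plan is to run the first step of the method of moving spheres directly on the integral system \eqref{eq3.2}. By the translation invariance of \eqref{eq3.1} we may assume $x_0=0$ and use the notation fixed just before Lemma \ref{lem3.3}. Write
$$
a(\lambda)=|U_{\lambda}|_{2^\ast, B_{\lambda}^{u}},\qquad b(\lambda)=|V_{\lambda}|_{2^\ast, B_{\lambda}^{v}}.
$$
Since $(u,v)$ is a positive classical solution, $u_\lambda=(\lambda/|x|)^{N-2}u(x_\lambda)>0$ and $v_\lambda>0$, so on $B_\lambda^u$ one has $0<u-u_\lambda<u$ and on $B_\lambda^v$ one has $0<v-v_\lambda<v$; hence $a(\lambda)\le|u|_{2^\ast,B_\lambda}$ and $b(\lambda)\le|v|_{2^\ast,B_\lambda}$. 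In particular $a(\lambda),b(\lambda)$ are finite (because $u,v\in D^{1,2}(\R^N)\hookrightarrow L^{2^\ast}(\R^N)$) and tend to $0$ as $\lambda\to0^+$.

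Next I would make the right-hand coefficients of \eqref{eq3.3}--\eqref{eq3.4} small. The Hardy--Littlewood--Sobolev inequality (Proposition \ref{pro1.1}) with $\mu=4$ and $t=\tfrac{N}{N-2}$, applied to $u^2,v^2\in L^{N/(N-2)}(\R^N)$, gives $w=|x|^{-4}\ast u^2\in L^{N/2}(\R^N)$ and $g=|x|^{-4}\ast v^2\in L^{N/2}(\R^N)$ (this uses $N\ge5$, so that $0<\mu<N$). Since $u,v\in L^{2^\ast}(\R^N)$ and $w,g\in L^{N/2}(\R^N)$, absolute continuity of the Lebesgue integral shows that
$$
\eta(\lambda):=|u|_{2^\ast,B_\lambda}+|v|_{2^\ast,B_\lambda}+|w|_{N/2,B_\lambda}+|g|_{N/2,B_\lambda}
$$
is nondecreasing in $\lambda$ and satisfies $\eta(\lambda)\to0$ as $\lambda\to0^+$. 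Because $B_\lambda^u,B_\lambda^v\subset B_\lambda$, each of the four coefficients on the right of \eqref{eq3.3}--\eqref{eq3.4} is bounded by $\varepsilon(\lambda)$ for a suitable function $\varepsilon(\lambda)\to0$ depending only on $\alpha_1,\alpha_2,\beta$ and on $\eta(\lambda)$; hence \eqref{eq3.3}--\eqref{eq3.4} give, for every $\lambda>0$,
$$
a(\lambda)\le\varepsilon(\lambda)\big(a(\lambda)+b(\lambda)\big),\qquad b(\lambda)\le\varepsilon(\lambda)\big(a(\lambda)+b(\lambda)\big).
$$

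Now fix $\lambda_0>0$ so small that $2\varepsilon(\lambda_0)<1$. For any $\sigma\in(0,\lambda_0]$ we have $\varepsilon(\sigma)\le\varepsilon(\lambda_0)$, so adding the last two inequalities (written for $\sigma$ in place of $\lambda$) yields $\big(1-2\varepsilon(\sigma)\big)\big(a(\sigma)+b(\sigma)\big)\le0$; since $a(\sigma),b(\sigma)\ge0$ are finite and $1-2\varepsilon(\sigma)>0$, this forces $a(\sigma)=b(\sigma)=0$. Hence $|B_\sigma^u|=|B_\sigma^v|=0$, and since $U_{0,\sigma}$ and $V_{0,\sigma}$ are continuous on $B_\sigma\setminus\{0\}$ (the solution being classical), $U_{0,\sigma}\ge0$ and $V_{0,\sigma}\ge0$ throughout $B_\sigma\setminus\{0\}$. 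As $\sigma\in(0,\lambda_0]$ was arbitrary, $(0,\lambda_0]\subset\Gamma_0^u\cap\Gamma_0^v$; translating back, $\Gamma_{x_0}^u$ and $\Gamma_{x_0}^v$ are nonempty.

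The one place that requires care is the estimate in the second paragraph: one must ensure that all four coefficients produced by Lemma \ref{lem3.3} become small \emph{simultaneously} and uniformly for $\sigma\le\lambda_0$, which is exactly what the monotonicity and vanishing of $\eta(\lambda)$ provide; once that is in hand, the coupled pair \eqref{eq3.3}--\eqref{eq3.4} is absorbed into its left-hand side precisely as in the scalar situation. Beyond this I expect no real obstacle — only the routine verification that the Kelvin reflections $U_{0,\sigma},V_{0,\sigma}$ are continuous away from the centre, so that the a.e.\ sign information upgrades to a pointwise one.
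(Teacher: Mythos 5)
Your proof is correct and takes essentially the same approach as the paper: one shrinks the ball until the right-hand coefficients of \eqref{eq3.3}--\eqref{eq3.4} (controlled via absolute continuity of the $L^{2^\ast}$-norms of $u,v$ and $L^{N/2}$-norms of $w,g$ over $B_\lambda$) can be absorbed, forcing $|U_\sigma|_{2^\ast,B_\sigma^u}=|V_\sigma|_{2^\ast,B_\sigma^v}=0$ for all small $\sigma$. The only cosmetic divergence is the final upgrade from the measure-zero conclusion to the pointwise sign: the paper feeds this information back through \eqref{eq3.5}, \eqref{eq3.8}, \eqref{eq3.9}, whereas you invoke continuity of $U_{0,\sigma},V_{0,\sigma}$ on $B_\sigma\setminus\{0\}$ together with the fact that an open null set is empty; both are equally valid.
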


\begin{proof}[\bf Proof.]
Since \eqref{eq3.2} is invariant by translations, it suffices to consider
the case $x_0 = 0$. Let $C>0$ be the constant given in Lemma \ref{lem3.3}
and choose $\varepsilon_{0}\in(0, 1)$ sufficiently small such that, for all
$0 <\lambda\leq\varepsilon_{0}$,
$$
\alpha_1|u|^{2}_{2^\ast, B_{\lambda}^{u}}
+\alpha_1|w|_{\frac{N}{2}, B_{\lambda}^{u}}
+\beta|g|_{\frac{N}{2}, B_{\lambda}^{u}}
+\beta|u|_{2^\ast,B_{\lambda}^{u}}|v|_{2^\ast,B_{\lambda}^{v}}
\leq\frac{1}{4C}
$$
and
$$
\alpha_2|v|^{2}_{2^\ast, B_{\lambda}^{v}}
+\alpha_2|g|_{\frac{N}{2}, B_{\lambda}^{v}}
+\beta|w|_{\frac{N}{2}, B_{\lambda}^{v}}
+\beta|u|_{2^\ast,B_{\lambda}^{u}}|v|_{2^\ast, B_{\lambda}^{v}}
\leq\frac{1}{4C}.
$$
We see from Lemma \ref{lem3.3} that
$$
|U_{\lambda}|_{2^\ast, B_{\lambda}^{u}}
=|V_{\lambda}|_{2^\ast, B_{\lambda}^{v}}=0,
$$
which implies meas\,$(B_{\lambda}^{u})=$ meas\,$(B_{\lambda}^{v})=0$.
Then it follows from \eqref{eq3.5}, \eqref{eq3.8} and \eqref{eq3.9} that
$U_{\lambda}\geq0$ in $B_{\lambda}\backslash\{0\}$. Similarly, we also
have $V_{\lambda}\geq0$ in $B_{\lambda}\backslash\{0\}$. Therefore, we
have $\Gamma_0^u\neq\emptyset\neq\Gamma_0^v$.
\end{proof}

According to Lemma \ref{lem3.4}, we define
$$
\lambda_{x_0}^u:=\sup \Gamma_{x_0}^u>0,\ \
\lambda_{x_0}^v:=\sup \Gamma_{x_0}^v>0,\ \
\lambda_{x_0}:=\min\{\lambda_{x_0}^u, \lambda_{x_0}^v\}>0.
$$

\begin{lem}\label{lem3.5}
If $\lambda_{x_0}<+\infty$, then
$U_{x_0,\lambda_{x_0}}=V_{x_0,\lambda_{x_0}}=0$
in $B_{\lambda_{x_0}}(x_0)\backslash\{x_0\}$.
\end{lem}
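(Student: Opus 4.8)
The plan is to argue by contradiction following the standard moving-sphere scheme. Suppose $\lambda_{x_0}<+\infty$ but the conclusion fails; without loss of generality take $x_0=0$. By the definition of $\lambda_0=\min\{\lambda_0^u,\lambda_0^v\}$ as a supremum of the nested family $\Gamma_0^u\cap\Gamma_0^v$, we have $U_{\lambda_0}\ge 0$ and $V_{\lambda_0}\ge 0$ in $B_{\lambda_0}\setminus\{0\}$, so that meas$(B_{\lambda_0}^u)=$meas$(B_{\lambda_0}^v)=0$. Assume for contradiction that $(U_{\lambda_0},V_{\lambda_0})\not\equiv(0,0)$; say $U_{\lambda_0}\not\equiv 0$ (the case $V_{\lambda_0}\not\equiv 0$ is symmetric). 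First I would upgrade ``$U_{\lambda_0}\ge 0$, $U_{\lambda_0}\not\equiv 0$'' to the strict inequality $U_{\lambda_0}>0$ in $B_{\lambda_0}\setminus\{0\}$: from the representation formula analogous to \eqref{eq3.5} together with the claim \eqref{eq3.6}, $U_{\lambda_0}$ is given by an integral of a nonnegative kernel against $u_{\lambda_0}w_{\lambda_0}-uw$ and $u_{\lambda_0}g_{\lambda_0}-ug$, which are $\ge 0$ a.e. once $U_{\lambda_0},W_{\lambda_0},U_{\lambda_0},G_{\lambda_0}\ge 0$; since the integrand cannot vanish identically, $U_{\lambda_0}(x)>0$ for every $x\in B_{\lambda_0}\setminus\{0\}$. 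The same propagates to $W_{\lambda_0}>0$ via \eqref{eq3.8}. (If instead $U_{\lambda_0}\equiv 0$ but $V_{\lambda_0}\not\equiv 0$, then $W_{\lambda_0}\equiv 0$, but the equation for $V_{\lambda_0}$ still forces $V_{\lambda_0}>0$ and $G_{\lambda_0}>0$ strictly, and the argument below applies with the roles interchanged.)

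Next I would show that this strict positivity at $\lambda_0$ cannot persist, i.e.\ that $\lambda_0$ can be increased, contradicting maximality. The mechanism: for $\lambda$ slightly larger than $\lambda_0$ one wants meas$(B_\lambda^u)=$meas$(B_\lambda^v)=0$ again, via Lemma \ref{lem3.3}, which requires the smallness
$$
\alpha_1|u|^{2}_{2^\ast,B_{\lambda}^{u}}+\alpha_1|w|_{\frac{N}{2},B_{\lambda}^{u}}+\beta|g|_{\frac{N}{2},B_{\lambda}^{u}}+\beta|u|_{2^\ast,B_{\lambda}^{u}}|v|_{2^\ast,B_{\lambda}^{v}}\le\frac{1}{4C}
$$
and its $v$-counterpart. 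So I would prove that $\mathrm{meas}(B_\lambda^u)+\mathrm{meas}(B_\lambda^v)\to 0$ as $\lambda\downarrow\lambda_0$. Here is where the strict positivity is used: fix a small $\delta>0$ and a compact set $K_\delta=\{x:\delta\le|x|\le\lambda_0-\delta\}$; by strict positivity and continuity, $U_{\lambda_0}\ge c_\delta>0$ and $V_{\lambda_0}\ge c_\delta>0$ on $K_\delta$ (if $V_{\lambda_0}$ was identically zero we instead work only with $U$, treating $v$'s contribution through the coupling terms, which still tends to zero by absolute continuity of the integral and the decay of $u$). By continuity of $(x,\lambda)\mapsto U_\lambda(x)$, for $\lambda$ close to $\lambda_0$ we still have $U_\lambda>0$ and $V_\lambda>0$ on $K_\delta$, so $B_\lambda^u\cup B_\lambda^v\subset\{|x|<\delta\}\cup\{\lambda_0-\delta<|x|<\lambda\}$. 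The measure of the outer annulus is $O(\lambda-\lambda_0+\delta)$ and the inner ball has measure $O(\delta^N)$; letting first $\lambda\downarrow\lambda_0$ and then $\delta\downarrow 0$, and using the absolute continuity of the integrals $|u|_{2^\ast}^{2^\ast}$, $|v|_{2^\ast}^{2^\ast}$, $|w|_{N/2}^{N/2}$, $|g|_{N/2}^{N/2}$ (finite because $(u,v)\in D^{1,2}\times D^{1,2}$ and the convolutions lie in the appropriate Lebesgue spaces by Hardy--Littlewood--Sobolev), the norms over $B_\lambda^u$ and $B_\lambda^v$ become $<\frac{1}{4C}$ for $\lambda\in(\lambda_0,\lambda_0+\varepsilon)$.

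Applying Lemma \ref{lem3.3} with these small coefficients yields a linear system
$$
|U_\lambda|_{2^\ast,B_\lambda^u}\le\tfrac14|U_\lambda|_{2^\ast,B_\lambda^u}+\tfrac14|V_\lambda|_{2^\ast,B_\lambda^v},\qquad
|V_\lambda|_{2^\ast,B_\lambda^v}\le\tfrac14|V_\lambda|_{2^\ast,B_\lambda^v}+\tfrac14|U_\lambda|_{2^\ast,B_\lambda^u},
$$
forcing $|U_\lambda|_{2^\ast,B_\lambda^u}=|V_\lambda|_{2^\ast,B_\lambda^v}=0$, hence meas$(B_\lambda^u)=$meas$(B_\lambda^v)=0$, hence (as in Lemma \ref{lem3.4}, via \eqref{eq3.5}, \eqref{eq3.8}, \eqref{eq3.9}) $U_\lambda\ge 0$ and $V_\lambda\ge 0$ in $B_\lambda\setminus\{0\}$ for all $\lambda\in(\lambda_0,\lambda_0+\varepsilon)$. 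But then $\lambda_0+\tfrac{\varepsilon}{2}\in\Gamma_0^u\cap\Gamma_0^v$, contradicting $\lambda_0=\min\{\lambda_0^u,\lambda_0^v\}$. Therefore $(U_{\lambda_0},V_{\lambda_0})=(0,0)$ in $B_{\lambda_0}\setminus\{0\}$. The main obstacle I anticipate is the passage from ``$U_{\lambda_0}\not\equiv0$ and $\ge0$'' to the quantitative claim $\mathrm{meas}(B_\lambda^u)+\mathrm{meas}(B_\lambda^v)\to0$: it requires combining the strong-maximum-principle-type strict positivity coming from the positive kernels with a careful treatment of the degenerate endpoints $|x|=0$ and $|x|=\lambda$, and handling the possibility that only one of $U_{\lambda_0},V_{\lambda_0}$ is nontrivial while the other vanishes, where one must still extract the needed smallness of the coupling contributions from absolute continuity of the integrals alone.
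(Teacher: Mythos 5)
Your overall scheme follows the paper's proof closely: reduce to $x_0=0$, note $U_{\lambda_0},V_{\lambda_0}\ge 0$ by continuity, upgrade to strict positivity, then use Lemma \ref{lem3.3} plus continuity and absolute continuity of the integrals to push $\lambda$ slightly beyond $\lambda_0$, contradicting maximality. These steps are fine in the case $U_{\lambda_0}\not\equiv 0$ \emph{and} $V_{\lambda_0}\not\equiv 0$.

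The gap is your treatment of the mixed case (say $U_{\lambda_0}\equiv 0$ but $V_{\lambda_0}\not\equiv 0$). You dismiss it with ``the argument below applies with the roles interchanged,'' but this does not work: the pushing argument needs $B_\lambda^u$ to have small integral norms for $\lambda$ slightly above $\lambda_0$, and this is obtained from the strict lower bound $U_{\lambda_0}\ge\delta'>0$ on a large set. If $U_{\lambda_0}\equiv 0$ there is no such lower bound --- e.g.\ for a bubble centred at the origin one has $U_{\lambda_0}\equiv 0$ and $U_\lambda<0$ on essentially \emph{all} of $B_\lambda$ for every $\lambda>\lambda_0$, so meas$(B_\lambda^u)$ does not shrink. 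With the coupling term in \eqref{eq3.3}--\eqref{eq3.4} the $u$ and $v$ estimates cannot be decoupled, and the ``$2\times 2$ linear system'' you propose no longer has small coefficients. The paper closes this hole with a separate structural observation: if $U_{\lambda_0}\equiv 0$ then $u_{\lambda_0}=u$, hence (by \eqref{eq3.8}) $W_{\lambda_0}\equiv 0$, and plugging this into the exact identity \eqref{eq3.5} gives $0=U_{\lambda_0}(x)=\beta R_N\int_{B_{\lambda_0}}(\text{positive kernel})\,u\,G_{\lambda_0}\,dy$; since $u>0$, $G_{\lambda_0}\ge 0$, this forces $G_{\lambda_0}\equiv 0$, and then \eqref{eq3.12} forces $V_{\lambda_0}\equiv 0$. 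In other words the mixed case simply cannot occur. Notice that your own parenthetical already shows that $V_{\lambda_0}\not\equiv 0$ together with $W_{\lambda_0}\equiv 0$ would give $G_{\lambda_0}>0$ strictly --- but that is \emph{incompatible} with $U_{\lambda_0}\equiv 0$ by the displayed identity, so you were one step from the paper's observation. You need to insert this elimination of the mixed case explicitly; without it the proof is incomplete.

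Two minor remarks. First, your localisation on $K_\delta=\{\delta\le|x|\le\lambda_0-\delta\}$ plus absolute continuity is a workable substitute for the paper's sharper step of proving $\liminf_{|x|\to 0}U_{\lambda_0}(x)>0$, which gives a uniform lower bound on the whole punctured ball $B_{\lambda_0-r_0}\setminus\{0\}$ directly. Second, once the mixed case is excluded, your contradiction is obtained only against the definition of $\lambda_0=\min\{\lambda_0^u,\lambda_0^v\}$; you should state that the push shows both $U_\lambda\ge 0$ and $V_\lambda\ge 0$ in $B_\lambda\setminus\{0\}$ for $\lambda\in(\lambda_0,\lambda_0+\varepsilon)$, so that $\lambda_0+\varepsilon/2\in\Gamma_0^u\cap\Gamma_0^v$, which indeed contradicts the minimum.
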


\begin{proof}[\bf Proof.]
As before, it is sufficient to consider the case $x_0=0$. Without loss of
generality, we assume that $\lambda_0=\lambda_0^u<+\infty$. Since
$U_{\lambda}$ and $V_{\lambda}$ are continuous with respect to
$\lambda$, we have $U_{\lambda_{0}}\geq0$ and $V_{\lambda_{0}}\geq0$
in $B_{\lambda_{0}}\backslash\{0\}$. Then we see from \eqref{eq3.8} and
\eqref{eq3.9} that
$$
\aligned
W_{\lambda_{0}}(x)
&=\int_{B_{\lambda_{0}}}\Big(\frac{1}{|x-y|^{4}}
-\frac{1}{|\frac{|y|}{\lambda_{0}}x-\frac{\lambda_{0}}{|y|}y|^{4}}\Big)
\times(u_{\lambda_{0}}^{2}(y)-u^{2}(y))dy\geq 0
\endaligned
$$
and
\begin{equation}\label{eq3.12}
\aligned
G_{\lambda_{0}}(x)
&=\int_{B_{\lambda_{0}}}\Big(\frac{1}{|x-y|^{4}}
-\frac{1}{|\frac{|y|}{\lambda_{0}}x-\frac{\lambda_{0}}{|y|}y|^{4}}\Big)
\times(v_{\lambda_{0}}^{2}(y)-v^{2}(y))dy\geq0.
\endaligned
\end{equation}
Observe that if $U_{\lambda_{0}}=0$ in $B_{\lambda_0}\backslash\{0\}$,
then it follows from \eqref{eq3.5} that $G_{\lambda_0}=0$ in
$B_{\lambda_0}\backslash\{0\}$, which combined with \eqref{eq3.12} implies
$V_{\lambda_0}=0$ in $B_{\lambda_0}\backslash\{0\}$. Analogously, if
$V_{\lambda_{0}}=0$ in $B_{\lambda_0}\backslash\{0\}$, then we also
have $U_{\lambda_0}=0$ in $B_{\lambda_0}\backslash\{0\}$.

Assume to the contrary that $U_{\lambda_{0}}\not\equiv0$ and
$V_{\lambda_{0}}\not\equiv0$ in $B_{\lambda_{0}}\backslash\{0\}$. Then
$W_{\lambda_0}>0$ in $B_{\lambda_{0}}\backslash\{0\}$, which together
with \eqref{eq3.5} indicates that $U_{\lambda_{0}}>0$ in
$B_{\lambda_{0}}\backslash\{0\}$. Similarly, there also holds
$V_{\lambda_{0}}>0$ in $B_{\lambda_{0}}\backslash\{0\}$. We claim that
there exist two numbers $\rho>0$ and $\delta>0$ such that
\begin{equation}\label{eq3.13}
U_{\lambda_{0}}\geq \delta\ \ \text{and}\ \ V_{\lambda_{0}}\geq \delta \ \mbox{ in } B_{\rho}\backslash\{0\}.
\end{equation}
Indeed, since $U_{\lambda_{0}}>0$, $V_{\lambda_{0}}>0$,
$W_{\lambda_{0}}>0$ and $G_{\lambda_{0}}>0$ in
$B_{\lambda_{0}}\backslash\{0\}$, we deduce from \eqref{eq3.5} that
$$
\aligned
\liminf_{|x|\rightarrow0}U_{\lambda_{0}}(x)
&\geq\alpha_1R_{N}\int_{B_{\lambda_{0}}}\left(\frac{1}{|y|^{N-2}}
-\frac{1}{\lambda_{0}^{N-2}}\right)
\times(u_{\lambda_{0}}(y)w_{\lambda_{0}}(y)-u(y)w(y))dy\\
&\quad\,+\beta R_{N}\int_{B_{\lambda_{0}}}\left(\frac{1}{|y|^{N-2}}
-\frac{1}{\lambda_{0}^{N-2}}\right)
\times(u_{\lambda_{0}}(y)g_{\lambda_{0}}(y)-u(y)g(y))dy\\
&>0
\endaligned
$$
and, similarly, $\liminf_{|x|\rightarrow0}V_{\lambda_{0}}(x)>0$.
Then \eqref{eq3.13} holds for small $\rho>0$ and $\delta > 0$.

Let $C>0$ be the constant given in Lemma \ref{lem3.3} and fix a number
$r_0\in (0, \frac{\lambda_{0}}{2})$ such that
\begin{equation}\label{eq3.14}
\aligned
\alpha_1|u|^{2}_{2^\ast, B_{\lambda_{0}+r_{0}}\backslash B_{\lambda_{0}-r_{0}}}
+\alpha_1&|w|_{\frac{N}{2},B_{\lambda_{0}+r_{0}}\backslash B_{\lambda_{0}-r_{0}}}
+\beta|g|_{\frac{N}{2}, B_{\lambda_{0}+r_{0}}\backslash B_{\lambda_{0}-r_{0}}}\\
&+\beta|u|_{2^\ast, B_{\lambda_{0}+r_{0}}\backslash B_{\lambda_{0}-r_{0}}}
|v|_{2^\ast, B_{\lambda_{0}+r_{0}}\backslash B_{\lambda_{0}-r_{0}}}
\leq\frac{1}{4C}
\endaligned
\end{equation}
and
\begin{equation}\label{eq3.15}
\aligned
\alpha_2|v|^{2}_{2^\ast, B_{\lambda_{0}+r_{0}}\backslash B_{\lambda_{0}-r_{0}}}
+\alpha_2&|g|_{\frac{N}{2},B_{\lambda_{0}+r_{0}}\backslash B_{\lambda_{0}-r_{0}}}
+\beta|w|_{\frac{N}{2}, B_{\lambda_{0}+r_{0}}\backslash B_{\lambda_{0}-r_{0}}}\\
&+\beta|u|_{2^\ast, B_{\lambda_{0}+r_{0}}\backslash B_{\lambda_{0}-r_{0}}}
|v|_{2^\ast, B_{\lambda_{0}+r_{0}}\backslash B_{\lambda_{0}-r_{0}}}
\leq\frac{1}{4C}.
\endaligned
\end{equation}
Since $U_{\lambda_{0}}>0$ and $V_{\lambda_{0}}>0$ in $B_{\lambda_{0}}\backslash\{0\}$,
we see from \eqref{eq3.13} that $U_{\lambda_{0}}\geq \delta'$ and $V_{\lambda_{0}}\geq \delta'$
in $B_{\lambda_{0}-r_{0}}\backslash\{0\}$ for some $\delta'>0$. By uniform
continuity, there exists $\tau_{0}\in(0, r_0)$ such that, for any
$\lambda\in(\lambda_{0},\lambda_{0}+\tau_{0})$, $U_{\lambda}\geq \delta'/2$
and $V_{\lambda}\geq \delta'/2$ in $B_{\lambda_{0}-r_{0}}\backslash\{0\}$. Then
$B_{\lambda}^{u}\subset B_{\lambda_{0}+r_{0}}\backslash B_{\lambda_{0}-r_{0}}$
and $B_{\lambda}^{v}\subset B_{\lambda_{0}+r_{0}}\backslash B_{\lambda_{0}-r_{0}}$
for any $\lambda\in(\lambda_{0},\lambda_{0}+\tau_{0})$. Using Lemma \ref{lem3.3},
\eqref{eq3.14} and \eqref{eq3.15} leads to $|U_{\lambda}|_{2^\ast, B_{\lambda}^{u}}=0$
when $\lambda\in(\lambda_{0},\lambda_{0}+\tau_{0})$, which means
meas\,$(B_{\lambda}^{u})=0$. Therefore, we have $U_{\lambda}\geq 0$ in
$B_{\lambda}\backslash\{0\}$ for any $\lambda\in(\lambda_{0},\lambda_{0}+\tau_{0})$,
which contradicts the definition of $\lambda_{0}^u$. The proof is complete.
\end{proof}

The following lemma is proved in \cite{LZ1, Liyy}.

\begin{lem} \label{lem3.6}
Let $N\geq1$, $\nu\in\R$ and $u\in C^{1}(\R^{N}, \mathbb R)$. For every
$x_0\in\R^N$ and $\lambda> 0$, define
$$
u_{x_0,\lambda}(x)=\Big(\frac{\lambda}{|x-x_0|}\Big)^{\nu}
u\Big(\frac{\lambda^{2}(x-x_0)}{|x-x_0|^{2}}+x_0\Big),\ \
x\in\R^N\backslash\{x_0\}.
$$
$(i)$ If for every $x_0\in\R^N$ there exists $\lambda_{x_0}<+\infty$ such
that
$$
u_{x_0,\lambda_{x_0}}(x)=u(x),\ \ \text{for all}\ x\in\R^N\backslash\{x_0\},
$$
then
$$
u(x)=C\Big(\frac{\tau}{\tau^{2}+|x-\overline{x}|^{2}}\Big)^{\frac{\nu}{2}}
$$
for some $C\in\R$, $\tau>0$ and $\overline{x}\in\R^N$.
Moreover, we have $\lambda_{x_0}=\sqrt{\tau^{2}+|x_{0}-\overline{x}|^{2}}$.

\vskip 1mm
\noindent $(ii)$ If for every $x_0\in\R^N$ there holds
$$
u_{x_0,\lambda}(x)\geq u(x),\ \ \mbox{for all}\ \lambda\in\R\
\text{and}\ x\in B_{\lambda}(x_0)\backslash\{x_0\},
$$
then $u\equiv C$ for some $C\in\R$.
\end{lem}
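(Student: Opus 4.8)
I would prove the two parts separately; both rely only on the explicit pointwise form of the Kelvin transform, not on any equation $u$ satisfies. \emph{Part $(ii)$} is the elementary half. Fix $x_0\in\R^N$ and $x\neq x_0$, put $r=|x-x_0|$ and $\omega=(x-x_0)/|x-x_0|$, and consider the scalar function $\psi(\lambda):=u_{x_0,\lambda}(x)=(\lambda/r)^{\nu}\,u\big(x_0+\tfrac{\lambda^{2}}{r}\,\omega\big)$, which is $C^1$ for $\lambda>0$ because $u\in C^1$. Since $x\in B_\lambda(x_0)$ exactly when $\lambda>r$, the hypothesis gives $\psi(\lambda)\ge u(x)=\psi(r)$ for all $\lambda\ge r$, so $\psi$ has a left-endpoint minimum at $\lambda=r$ and hence $\psi'(r)\ge0$. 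A direct computation gives $\psi'(r)=\tfrac{\nu}{r}\,u(x)+2\,\nabla u(x)\cdot\omega$, so $2\,\nabla u(x)\cdot\omega\ge-\tfrac{\nu}{r}\,u(x)$. Here $x$ is fixed while $x_0=x-r\omega$ is arbitrary, so this holds for every $\omega\in S^{N-1}$ and every $r>0$; letting $r\to+\infty$ yields $\nabla u(x)\cdot\omega\ge0$ for all $\omega$, hence $\nabla u(x)=0$, and as $x$ was arbitrary, $u$ is constant.

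For \emph{part $(i)$} the plan has three steps. \emph{Reduction.} Writing $z=x_0+\lambda_{x_0}^{2}(x-x_0)/|x-x_0|^{2}$ one has $\lambda_{x_0}/|x-x_0|=|z-x_0|/\lambda_{x_0}$, so $u_{x_0,\lambda_{x_0}}=u$ reads $u(x)=\lambda_{x_0}^{-\nu}\,|z-x_0|^{\nu}\,u(z)$; letting $x\to x_0$ (so $z\to\infty$ in all directions) and using continuity of $u$ at $x_0$ shows that $\ell:=\lim_{|z|\to\infty}|z|^{\nu}u(z)$ exists and equals $\lambda_{x_0}^{\nu}u(x_0)$ for \emph{every} $x_0$. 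If $\nu=0$ this already gives $u\equiv\ell$; if $\nu\neq0$ and $\ell=0$ then $u\equiv0$; otherwise, replacing $u$ by $-u$ if necessary, $\ell>0$, $u=\ell\,\lambda_\cdot^{-\nu}>0$ on $\R^N$, and $x\mapsto\lambda_x=(\ell/u(x))^{1/\nu}$ is $C^1$. \emph{Functional equation.} Plugging $u=\ell\,\lambda_\cdot^{-\nu}$ into $u_{x_0,\lambda_{x_0}}(y)=u(y)$ and cancelling the positive $\nu$-th powers yields
$$
\lambda_{x_0}\,\lambda_y=|y-x_0|\,\lambda_{y^{*}},\qquad y^{*}:=x_0+\lambda_{x_0}^{2}\,\frac{y-x_0}{|y-x_0|^{2}},
$$
for all $x_0$ and $y\neq x_0$. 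Fixing $x_0$, letting $|y|\to\infty$ and writing $\hat e=(y-x_0)/|y-x_0|$ (so $y^{*}-x_0=\tfrac{\lambda_{x_0}^{2}}{|y-x_0|}\hat e\to0$), differentiability of $\lambda$ at $x_0$ gives $\lambda_{y^{*}}=\lambda_{x_0}+\tfrac{\lambda_{x_0}^{2}}{|y-x_0|}\nabla\lambda_{x_0}\cdot\hat e+o(|y-x_0|^{-1})$, and the functional equation becomes
$$
\lambda_y=|y-x_0|+\lambda_{x_0}\,\nabla\lambda_{x_0}\cdot\hat e+o(1)\qquad(|y|\to\infty),
$$
the remainder being uniform in the direction $\hat e$.

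\emph{Conclusion.} Writing this expansion for two centres $x_0,x_1$, letting $|y|\to\infty$ with $y/|y|\to e\in S^{N-1}$ (so $\hat e\to e$ in each and $|y-x_0|-|y-x_1|\to e\cdot(x_1-x_0)$), and subtracting gives $e\cdot\big[(x_1-x_0)+\lambda_{x_0}\nabla\lambda_{x_0}-\lambda_{x_1}\nabla\lambda_{x_1}\big]=0$ for every $e\in S^{N-1}$, hence $x\mapsto\lambda_x\nabla\lambda_x-x$ is a constant vector $-\overline{x}$. Thus $\nabla\big(\tfrac12\lambda_x^{2}\big)=x-\overline{x}=\nabla\big(\tfrac12|x-\overline{x}|^{2}\big)$ on the connected set $\R^N$, so $\lambda_x^{2}=|x-\overline{x}|^{2}+\tau^{2}$ with $\tau^{2}:=\lambda_{\overline{x}}^{2}>0$, whence $u(x)=\ell\,\lambda_x^{-\nu}=\ell\,(\tau^{2}+|x-\overline{x}|^{2})^{-\nu/2}=C\big(\tfrac{\tau}{\tau^{2}+|x-\overline{x}|^{2}}\big)^{\nu/2}$ with $C=\ell\,\tau^{-\nu/2}$; the ``Moreover'' claim is immediate since $\lambda_{x_0}=\sqrt{\tau^{2}+|x_0-\overline{x}|^{2}}$ by construction.

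I expect the main obstacle to be this last step, i.e.\ recovering the rigid quadratic form of $x\mapsto\lambda_x$ from nothing more than the $C^1$ hypothesis: one is forced to use only the first-order asymptotics of the functional equation at infinity, and to verify carefully that the expansion in the previous step is uniform in the direction $\hat e$ so that the limit taken along the varying directions $\hat e_0,\hat e_1$ is legitimate. The remaining bookkeeping (the sign normalisation of $\ell$, and the degenerate cases $\nu=0$ and $\ell=0$) is routine.
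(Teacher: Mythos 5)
Your proposal is correct. Note first that the paper does not prove this lemma at all: it is quoted verbatim from the references \cite{LZ1, Liyy}, so there is no in-paper argument to compare against. Your proof is a valid, self-contained rendition of the standard ``calculus lemma'' of the moving-spheres method and follows essentially the route of those references. Part $(ii)$ is exactly the classical endpoint-derivative argument: $\psi(\lambda)\ge\psi(r)$ for $\lambda>r$ plus differentiability gives $\psi'(r)\ge 0$, the computation $\psi'(r)=\tfrac{\nu}{r}u(x)+2\nabla u(x)\cdot\omega$ is right, and sending $r\to\infty$ over all $\omega$ kills the gradient. In part $(i)$, the identification $\ell=\lambda_{x_0}^{\nu}u(x_0)$ for every $x_0$, the reduction to the functional equation $\lambda_{x_0}\lambda_y=|y-x_0|\,\lambda_{y^{*}}$, and the first-order matching at infinity are all sound; the uniformity in $\hat e$ that you rightly worry about is automatic, since the $o(|h|)$ in $\lambda(x_0+h)=\lambda(x_0)+\nabla\lambda(x_0)\cdot h+o(|h|)$ is uniform over directions of $h$ by the very definition of differentiability at the single point $x_0$, and $|y^{*}-x_0|=\lambda_{x_0}^{2}/|y-x_0|$ converts it into a genuine $o(1)$ after multiplication by $|y-x_0|/\lambda_{x_0}$. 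The concluding step ($\lambda_x\nabla\lambda_x-x$ constant, hence $\lambda_x^2=\tau^2+|x-\overline{x}|^2$) is clean; the cited references organize the same first-order asymptotics slightly differently (expanding $|y|^{\nu}u(y)$ rather than $\lambda_y$), but the content is identical. Two cosmetic remarks: the hypothesis ``$\lambda\in\R$'' in $(ii)$ is a typo for $\lambda>0$, which you implicitly and correctly assume; and in the degenerate branches of $(i)$ ($\nu=0$, or $\ell=0$) the ``Moreover'' clause is vacuous since $\lambda_{x_0}$ is then not unique — harmless here, as the application has $\nu=N-2>0$ and $u>0$.
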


We are in a position to prove the main result in this section.

\begin{proof}[\bf Proof of Theorem \ref{thm3.1+}.]
Let $(u,v)$ be a positive solution of \eqref{eq3.1} and recall that
$$
\lambda_{x_0}^u:=\sup \Gamma_{x_0}^u>0,\ \
\lambda_{x_0}^v:=\sup \Gamma_{x_0}^v>0,\ \
\lambda_{x_0}:=\min\{\lambda_{x_0}^u, \lambda_{x_0}^v\}>0,
$$
where
$$
\Gamma_{x_0}^u:=\{\lambda>0 : U_{x_0,\sigma}\geq 0
\mbox{ in }B_{\sigma}(x_0)\backslash\{x_0\} \ \mbox{for all }\sigma\in(0,\lambda]\}
$$
and
$$
\Gamma_{x_0}^v:=\{\lambda>0 : V_{x_0,\sigma}\geq0 \mbox{ in }
B_{\sigma}(x_0)\backslash\{x_0\} \ \mbox{for all }\sigma\in(0,\lambda]\}.
$$
We first claim that $\lambda_{x_0}<+\infty$ for any $x_0\in\R^N$. If not,
then we have the following two cases.

\textbf{Case 1}. $\lambda_{x_0}=+\infty$ for any $x_0\in\R^N$. In this
case, we see from Lemma \ref{lem3.6}$(ii)$ that
$$
(u, v) \equiv (C_1, C_2)
$$
for some constants $C_1,\,C_2 > 0$. Then $(u, v)$ could not be a solution
of \eqref{eq3.1}, yielding a contradiction.

\textbf{Case 2}. There exist $x_0,\,y_{0}\in\R^N$ such that
$\lambda_{x_0}=+\infty$ and $\lambda_{y_0}<+\infty$. In this case, since
$\lambda_{x_0}^u\geq \lambda_{x_0}=+\infty$, we have, for any $\lambda>0$,
$U_{x_0, \lambda}\geq 0$ for $x\in B_{\lambda}(x_0)\backslash\{x_0\}$
which implies that $u(x)\geq u_{x_0, \lambda}(x)$ for
$x\in \R^N\backslash B_{\lambda}(x_0)$. Then we obtain
$|x-x_0|^{N-2}u(x)\geq\lambda^{N-2}u(x_{x_0,\lambda})$ for
$x\in \R^N\backslash B_{\lambda}(x_0)$ and so
$$
\liminf_{|x|\rightarrow\infty}|x|^{N-2}u(x)\geq\lambda^{N-2}u(x_0).
$$
Since $\lambda>0$ is arbitrary and $u(x_0)>0$, we obtain
\begin{equation}\label{eq3.16}
\lim_{|x|\rightarrow\infty}|x|^{N-2}u(x)=+\infty.
\end{equation}
On the other hand, since $\lambda_{y_0}<+\infty$, we see from Lemma
\ref{lem3.5} that
$$
u_{y_0, \lambda_{y_0}}(x)=u(x),\ \ \mbox{for}\ x\in \R^N\backslash \{y_0\}.
$$
Then we have
$\lim_{|x|\rightarrow\infty}|x|^{N-2}u(x)=\lambda_{y_0}^{N-2}u(y_0)<+\infty$,
yielding a contradiction with \eqref{eq3.16}.

Since $\lambda_{x_0}<+\infty$ for any $x_0\in\R^N$, we deduce from
Lemma \ref{lem3.5} that
$$
u_{x_0, \lambda_{x_0}}(x)=u(x)\ \ \text{and}\ \ v_{x_0, \lambda_{x_0}}(x)=v(x),\ \
\mbox{for all}\ x\in \R^N\backslash \{x_0\}.
$$
In view of Lemma \ref{lem3.6}$(i)$, $(u, v)$ must be of the form
\begin{equation}\label{eq3.17}
u(x)=C_{1}\Big(\frac{\tau}{\tau^{2}+|x-\overline{x}|^{2}}\Big)^{\frac{N-2}{2}},\ \
v(x)=C_{2}\Big(\frac{\tau}{\tau^{2}+|x-\overline{x}|^{2}}\Big)^{\frac{N-2}{2}}
\end{equation}
for some $C_{1}$, $C_{2}$, $\tau>0$ and $\overline{x}\in\R^N$.

Using \eqref{eq3.17} and the identity (see \cite[(37)]{DHQWF} for example)
\begin{equation}\label{eq3.18}
\int_{\R^N}\frac{1}{|x-y|^{2s}}\Big(\frac{1}{1+|y|^{2}}\Big)^{N-s}dy
=I(s)\Big(\frac{1}{1+|x|^{2}}\Big)^{s},\ \ 0 < s < \frac{N}{2},
\end{equation}
we have
$$
w(x)
=\int_{\R^N}\frac{u^2(y)}{|x-y|^{4}}dy
=C_{1}^{2}I(2)\Big(\frac{\tau}{\tau^{2}+|x-\overline{x}|^{2}}\Big)^{2}
$$
and
$$
g(x)
=\int_{\R^N}\frac{v^2(y)}{|x-y|^{4}}dy
=C_{2}^{2}I(2)\Big(\frac{\tau}{\tau^{2}+|x-\overline{x}|^{2}}\Big)^{2}.
$$
Then we deduce from \eqref{eq3.2} and \eqref{eq3.18} that
$$
\aligned
u(x)
&=\alpha_1R_{N}\int_{\R^N}\frac{u(y)w(y)}{|x-y|^{N-2}}dy
+\beta R_{N}\int_{\R^N}\frac{u(y)g(y)}{|x-y|^{N-2}}dy\\
&=\alpha_1R_{N}C_{1}^{3}I(2)I\Big(\frac{N-2}{2}\Big)
\Big(\frac{\tau}{\tau^{2}+|x-\overline{x}|^{2}}\Big)^{\frac{N-2}{2}}
+\beta R_{N}C_{1}C_{2}^{2}I(2)I\Big(\frac{N-2}{2}\Big)
\Big(\frac{\tau}{\tau^{2}+|x-\overline{x}|^{2}}\Big)^{\frac{N-2}{2}},
\endaligned$$
which combined with \eqref{eq3.17} leads to
$$
\alpha_1R_{N}C_{1}^{2}I(2)I\Big(\frac{N-2}{2}\Big)
+\beta R_{N}C_{2}^{2}I(2)I\Big(\frac{N-2}{2}\Big)=1.
$$
Similarly, we also have
$$
\alpha_2R_{N}C_{2}^{2}I(2)I\Big(\frac{N-2}{2}\Big)
+\beta R_{N}C_{1}^{2}I(2)I\Big(\frac{N-2}{2}\Big)=1.
$$
A simple calculation shows that
$$
C_{1}=\frac{\sqrt{k_{0}}}{\sqrt{R_{N}I(2)I\big(\frac{N-2}{2}\big)}},\ \
C_{2}=\frac{\sqrt{l_{0}}}{\sqrt{R_{N}I(2)I\big(\frac{N-2}{2}\big)}}.
$$
The proof is completed.
\end{proof}

As a direct consequence of Theorem \ref{thm3.1+} and Lemma \ref{lem2.3},
we have the following corollary.

\begin{cor}\label{cor3.2}
Let $\beta>\max\{\alpha_1,\alpha_2\}$. If $(u, v)\in H$  is a nontrivial classical positive solution of
\eqref{eq2.3}, then we have
$$
(u,v)=(\sqrt{k_{0}}U_{\delta,z},\sqrt{l_{0}}U_{\delta,z})
$$
for some $\delta>0$ and $z\in\R^N$. Moreover, each nontrivial classical positive solution $(u, v)\in H$ of
\eqref{eq2.3} is a ground state solution.
\end{cor}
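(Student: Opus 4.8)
The plan is to derive this corollary directly from Theorem \ref{thm3.1+} and Lemma \ref{lem2.3}, with no essentially new argument. First I would observe that if $(u,v)\in H$ is a nontrivial classical positive solution of \eqref{eq2.3}, then $u^{+}=u>0$ and $v^{+}=v>0$, so $(u,v)$ is in fact a positive classical solution of the limit system \eqref{eq3.1} (equivalently \eqref{eq3.1+}). Theorem \ref{thm3.1+} therefore applies and produces $\tau>0$ and $\overline{x}\in\R^{N}$ with
$$
u(x)=C_{1}\Big(\frac{\tau}{\tau^{2}+|x-\overline{x}|^{2}}\Big)^{\frac{N-2}{2}},\qquad
v(x)=C_{2}\Big(\frac{\tau}{\tau^{2}+|x-\overline{x}|^{2}}\Big)^{\frac{N-2}{2}},
$$
where $C_{1}=\sqrt{k_{0}}\,(R_{N}I(2)I(\tfrac{N-2}{2}))^{-1/2}$ and $C_{2}=\sqrt{l_{0}}\,(R_{N}I(2)I(\tfrac{N-2}{2}))^{-1/2}$.

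It then remains to recognize this profile as $(\sqrt{k_{0}}U_{\delta,z},\sqrt{l_{0}}U_{\delta,z})$. To this end I would use that, since $U_{\delta,z}$ solves $-\Delta w=(|x|^{-4}\ast w^{2})w$ in $\R^{N}$ and the identities $\alpha_{1}k_{0}+\beta l_{0}=1$ and $\alpha_{2}l_{0}+\beta k_{0}=1$ hold (both are immediate from the definitions of $k_{0}$ and $l_{0}$), the pair $(\sqrt{k_{0}}U_{\delta,z},\sqrt{l_{0}}U_{\delta,z})$ is itself a positive classical solution of \eqref{eq3.1}, as also recorded in Lemma \ref{lem2.3}. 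This pair has exactly the bubble shape above, with coefficients $\sqrt{k_{0}}\,C_{N}$ and $\sqrt{l_{0}}\,C_{N}$; applying the uniqueness of the coefficients in Theorem \ref{thm3.1+} to it forces $C_{1}=\sqrt{k_{0}}\,C_{N}$ and $C_{2}=\sqrt{l_{0}}\,C_{N}$ (equivalently $R_{N}I(2)I(\tfrac{N-2}{2})=C_{N}^{-2}$). Hence, taking $\delta=\tau$ and $z=\overline{x}$, we obtain $(u,v)=(\sqrt{k_{0}}U_{\delta,z},\sqrt{l_{0}}U_{\delta,z})$, which is the first assertion.

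Finally, for the ground-state claim, since $(\sqrt{k_{0}}U_{\delta,z},\sqrt{l_{0}}U_{\delta,z})\in\mathcal{N}_{\infty}$ and $\int_{\R^{N}}|\nabla U_{\delta,z}|^{2}\,dx=S_{H,L}^{2}$, one computes
$$
J_{\infty}\big(\sqrt{k_{0}}U_{\delta,z},\sqrt{l_{0}}U_{\delta,z}\big)
=\tfrac14\big\|(\sqrt{k_{0}}U_{\delta,z},\sqrt{l_{0}}U_{\delta,z})\big\|^{2}
=\tfrac14(k_{0}+l_{0})S_{H,L}^{2}=c_{\infty},
$$
the last equality being Lemma \ref{lem2.3}. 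Thus $(\sqrt{k_{0}}U_{\delta,z},\sqrt{l_{0}}U_{\delta,z})$ attains the infimum defining $c_{\infty}$, so it---and hence by the first part every nontrivial classical positive solution of \eqref{eq2.3}---is a ground state solution. I do not anticipate a genuine obstacle; the only slightly delicate bookkeeping is matching the constant $(R_{N}I(2)I(\tfrac{N-2}{2}))^{-1/2}$ coming out of Theorem \ref{thm3.1+} with the normalization $C_{N}$ in the definition of $U_{\delta,z}$, and this is handled most cleanly by the uniqueness argument above rather than by evaluating the Gamma-function factors by hand.
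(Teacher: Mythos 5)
Your proposal is correct and follows exactly the route the paper intends: the paper presents Corollary~\ref{cor3.2} as a ``direct consequence of Theorem~\ref{thm3.1+} and Lemma~\ref{lem2.3}'' without writing out the details, and your argument is precisely that deduction carried out carefully. Your device for reconciling the normalization constants --- applying Theorem~\ref{thm3.1+} to the known solution $(\sqrt{k_{0}}U_{\delta,z},\sqrt{l_{0}}U_{\delta,z})$ (which is easily checked to solve \eqref{eq3.1+} via $\alpha_{1}k_{0}+\beta l_{0}=\alpha_{2}l_{0}+\beta k_{0}=1$) to deduce $R_{N}I(2)I(\tfrac{N-2}{2})=C_{N}^{-2}$ --- is a clean way to avoid evaluating the Gamma-function products directly, and the final energy computation $J_{\infty}(\sqrt{k_{0}}U_{\delta,z},\sqrt{l_{0}}U_{\delta,z})=\tfrac14(k_{0}+l_{0})S_{H,L}^{2}=c_{\infty}$ correctly establishes the ground-state assertion.
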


\section{A nonlocal global compactness lemma}
In this section, we will prove a nonlocal global compactness result for
\eqref{eq2.1}, i.e., we will give a complete description for the Palais-Smale
sequences of the functional $J$. We start with a Br\'{e}zis-Lieb type lemma
about the nonlocal term which is inspired by the Br\'{e}zis-Lieb convergence
lemma (see \cite{BL1}). The proof is analogous to that of \cite[Lemma 2.2]{GY}
and \cite[Lemma 2.4]{MS1}, but we exhibit it here for completeness.

\begin{lem}\label{lem4.1}
Let $N\geq5$ and assume $\{(u_{n},v_{n})\}$ to be a bounded sequence
in $L^{\frac{2N}{N-2}}(\R^N)\times L^{\frac{2N}{N-2}}(\R^N)$ such that
$(u_{n},v_{n})\rightarrow (u,v)$ almost everywhere in $\R^N$ as
$n\rightarrow\infty$. Then we have
$$
\int_{\R^N}(|x|^{-4}\ast |u_{n}^{+}|^{2})|u_{n}^{+}|^{2}dx
-\int_{\R^N}(|x|^{-4}\ast |(u_{n}-u)^{+}|^{2})|(u_{n}-u)^{+}|^{2}dx
\rightarrow\int_{\R^N}(|x|^{-4}\ast |u^{+}|^{2})|u^{+}|^{2}dx
$$
and
$$
\int_{\R^N}(|x|^{-4}\ast |u_{n}^{+}|^{2})|v_{n}^{+}|^{2}dx
-\int_{\R^N}(|x|^{-4}\ast |(u_{n}-u)^{+}|^{2})|(v_{n}-v)^{+}|^{2}dx
\rightarrow\int_{\R^N}(|x|^{-4}\ast |u^{+}|^{2})|v^{+}|^{2}dx
$$
as $n\rightarrow\infty$.
\end{lem}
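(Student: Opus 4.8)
The plan is to follow the classical Brézis--Lieb strategy \cite{BL1}, which reduces the claimed convergence to pointwise almost everywhere convergence of the integrands plus a uniform integrability bound supplied by the Hardy--Littlewood--Sobolev inequality. First I would set $a_n=|u_n^+|^2$, $b_n=|v_n^+|^2$, $a=|u^+|^2$, $b=|v^+|^2$, and observe that since $t\mapsto t^+$ is Lipschitz, $(u_n-u)^+\to 0$ and $|u_n^+|^2-|(u_n-u)^+|^2\to|u^+|^2$ almost everywhere, with all these sequences bounded in $L^{N/(N-2)}(\R^N)$ because $\{(u_n,v_n)\}$ is bounded in $L^{2N/(N-2)}\times L^{2N/(N-2)}$ (recall $\tfrac1t+\tfrac\mu N+\tfrac1r=2$ with $\mu=4$ and $t=r=\tfrac{2N}{2N-4}=\tfrac{N}{N-2}$, which is exactly the exponent for which \eqref{eq1.8} applies to the quadratic densities here; this is where $N\ge5$ enters, ensuring $\tfrac{N}{N-2}>1$).

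Next I would expand the difference bilinearly. Writing the nonlocal quadratic form as $D(f,h)=\int\!\int\frac{f(x)h(y)}{|x-y|^4}\,dx\,dy$, one has
$$
D(a_n,a_n)-D(a_n-a,a_n-a)
= D(a,a)+2D(a_n-a,a)+2D(a_n-a,\,a_n-a)\ \text{(rearranged)},
$$
so after regrouping it suffices to show $D(a_n-a,\,a_n)\to D(a,a)$ type cross terms vanish in the appropriate sense; more precisely the error is controlled by $D$ evaluated on $\big(a_n-|(u_n-u)^+|^2-a\big)$ against bounded factors. The key point is that $r_n:=|u_n^+|^2-|(u_n-u)^+|^2-|u^+|^2\to0$ a.e.\ and $\{r_n\}$ is bounded in $L^{N/(N-2)}$; by the Brézis--Lieb lemma itself (or by a Vitali/equi-integrability argument) this forces $r_n\to0$ strongly in $L^{N/(N-2)}$ on bounded sets, and the HLS inequality \eqref{eq1.8} then gives $|D(r_n,\psi_n)|\le C|r_n|_{N/(N-2)}\,|\psi_n|_{N/(N-2)}\to0$ for any bounded family $\{\psi_n\}\subset L^{N/(N-2)}$. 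For the mixed term $\int(|x|^{-4}\ast|u_n^+|^2)|v_n^+|^2$ the same expansion works with $a_n$ in the first slot and $b_n$ in the second, using both $r_n\to0$ and its $v$-analogue $s_n:=|v_n^+|^2-|(v_n-v)^+|^2-|v^+|^2\to0$ in $L^{N/(N-2)}_{\mathrm{loc}}$, together with the boundedness of $|(u_n-u)^+|^2$ and $|u^+|^2$ in $L^{N/(N-2)}$.

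The only genuine obstacle is upgrading the a.e.\ convergence $r_n\to0$ (resp.\ $s_n\to0$) to convergence in $L^{N/(N-2)}$ strongly enough to kill the cross terms, since the sequences live on all of $\R^N$ and need not be tight. This is handled exactly as in \cite[Lemma 2.2]{GY} and \cite[Lemma 2.4]{MS1}: one does not need full strong convergence, only that $D(r_n,\psi_n)\to0$, and this follows by splitting $\R^N=B_R\cup(\R^N\setminus B_R)$, using the Brézis--Lieb lemma on $B_R$ to get $|r_n|_{N/(N-2),B_R}\to0$, and using the uniform $L^{N/(N-2)}$ bound plus the absolute continuity of the norm (equivalently, choosing $R$ large so the tail of $\psi_n$ is uniformly small) on the complement; the HLS constant is independent of $n$, so letting $n\to\infty$ and then $R\to\infty$ closes the estimate. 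Everything else is routine bilinear bookkeeping, so I would keep that part brief and refer to \cite{BL1,GY,MS1}.
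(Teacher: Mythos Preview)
Your overall strategy (Br\'ezis--Lieb for the quadratic densities, HLS to control the nonlocal pairing, bilinear expansion) is the paper's, but two steps are not right as written. First, the Br\'ezis--Lieb argument already yields \emph{global} strong convergence $r_n:=|u_n^+|^2-|(u_n-u)^+|^2-|u^+|^2\to 0$ in $L^{N/(N-2)}(\R^N)$; this is precisely the content of \eqref{eq4.1}--\eqref{eq4.2} in the paper and of the lemmas in \cite{GY,MS1} that you invoke. The standard trick is the pointwise bound $\big||(a+b)^+|^2-|a^+|^2\big|\le \varepsilon|a|^2+C_\varepsilon|b|^2$ with $a=u_n-u$, $b=u$, followed by dominated convergence on $(|r_n|-\varepsilon|u_n-u|^2)^+\le C_\varepsilon|u|^2$, which gives $|r_n|_{N/(N-2)}\le \varepsilon C+o(1)$ on all of $\R^N$. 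So your $B_R$/tail splitting is unnecessary---and in fact it fails: you cannot ``choose $R$ large so the tail of $\psi_n$ is uniformly small'' when $\psi_n=|(u_n-u)^+|^2$, since that sequence may concentrate at infinity.

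Second, and more importantly, your claim that the error is entirely ``$D$ evaluated on $r_n$ against bounded factors'' is incorrect. Writing $c_n=|(u_n-u)^+|^2$ and $a=|u^+|^2$, one has $a_n=c_n+a+r_n$ and
\[
D(a_n,a_n)-D(c_n,c_n)-D(a,a)=2D(r_n,a)+D(r_n,r_n)+2D(r_n,c_n)+2D(a,c_n).
\]
The first three terms vanish by HLS once $|r_n|_{N/(N-2)}\to 0$ globally, but $D(a,c_n)$ is not of the form $D(r_n,\cdot)$, and an HLS bound only gives $|D(a,c_n)|\le C|a|_{N/(N-2)}|c_n|_{N/(N-2)}$, which stays merely bounded. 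The paper kills this term with \emph{weak} convergence: $c_n\rightharpoonup 0$ in $L^{N/(N-2)}(\R^N)$ (bounded plus a.e.\ convergent to $0$), while $|x|^{-4}\ast a\in L^{N/2}(\R^N)$ is a fixed function, so $D(a,c_n)=\int(|x|^{-4}\ast a)\,c_n\to 0$. Equivalently, the paper groups the difference as $D(a_n-c_n,a_n-c_n)+2D(a_n-c_n,c_n)$ (see \eqref{eq4.5}--\eqref{eq4.6}), uses the strong convergence $a_n-c_n\to a$ for the first summand, and pairs the strong $L^{N/2}$ convergence $|x|^{-4}\ast(a_n-c_n)\to|x|^{-4}\ast a$ with $c_n\rightharpoonup 0$ for the second. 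Add this weak-convergence ingredient and drop the tail detour; the argument then coincides with the paper's.
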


\begin{proof}[\bf Proof.]
Similar to the proof of the Br\'{e}zis-Lieb Lemma in \cite{BL1}, we have
\begin{equation}\label{eq4.1}
|u_{n}^{+}|^{2}-|(u_{n}-u)^{+}|^{2}\rightarrow|u^{+}|^{2}\ \ \text{in}\ L^{\frac{N}{N-2}}(\R^N)
\end{equation}
and
\begin{equation}\label{eq4.2}
|v_{n}^{+}|^{2}-|(v_{n}-v)^{+}|^{2}\rightarrow|v^{+}|^{2}\ \ \text{in}\ L^{\frac{N}{N-2}}(\R^N).
\end{equation}
Using Proposition \ref{pro1.1} yields
\begin{equation}\label{eq4.3}
|x|^{-4}\ast(|u_{n}^{+}|^{2}-|(u_{n}-u)^{+}|^{2})\rightarrow|x|^{-4}\ast|u^{+}|^{2}
\ \ \text{in}\ L^{\frac{N}{2}}(\R^N)
\end{equation}
and
\begin{equation}\label{eq4.4}
|x|^{-4}\ast(|v_{n}^{+}|^{2}-|(v_{n}-v)^{+}|^{2})\rightarrow|x|^{-4}\ast|v^{+}|^{2}
\ \ \text{in}\ L^{\frac{N}{2}}(\R^N).
\end{equation}
Note that
\begin{equation}\label{eq4.5}
\aligned
&\quad\,\int_{\R^N}\big(|x|^{-4}\ast |u_{n}^{+}|^{2}\big)|u_{n}^{+}|^{2}dx
-\int_{\R^N}\big(|x|^{-4}\ast |(u_{n}-u)^{+}|^{2}\big)|(u_{n}-u)^{+}|^{2}dx\\
&=\int_{\R^N}\big(|x|^{-4}\ast (|u_{n}^{+}|^{2}-|(u_{n}-u)^{+}|^{2})\big)
(|u_{n}^{+}|^{2}-|(u_{n}-u)^{+}|^{2})dx\\
&\quad\,+2\int_{\R^N}\big(|x|^{-4}\ast (|u_{n}^{+}|^{2}-|(u_{n}-u)^{+}|^{2})\big)
|(u_{n}-u)^{+}|^{2}dx
\endaligned
\end{equation}
and
\begin{equation}\label{eq4.6}
\aligned
&\quad\,\int_{\R^N}\big(|x|^{-4}\ast |u_{n}^{+}|^{2}\big)|v_{n}^{+}|^{2}dx
-\int_{\R^N}\big(|x|^{-4}\ast |(u_{n}-u)^{+}|^{2}\big)|(v_{n}-v)^{+}|^{2}dx\\
&=\int_{\R^N}\big(|x|^{-4}\ast (|u_{n}^{+}|^{2}-|(u_{n}-u)^{+}|^{2})\big)
(|v_{n}^{+}|^{2}-|(v_{n}-v)^{+}|^{2})dx\\
&\quad\,+\int_{\R^N}\big(|x|^{-4}\ast (|u_{n}^{+}|^{2}-|(u_{n}-u)^{+}|^{2})\big)
|(v_{n}-v)^{+}|^{2}dx\\
&\quad\,+\int_{\R^N}\big(|x|^{-4}\ast (|v_{n}^{+}|^{2}-|(v_{n}-v)^{+}|^{2})\big)
|(u_{n}-u)^{+}|^{2}dx.
\endaligned
\end{equation}
Combining \eqref{eq4.1}$-$\eqref{eq4.6} with the fact that
$$
|(u_{n}-u)^{+}|^{2}\rightharpoonup0\ \ \text{and}\ \
|(v_{n}-v)^{+}|^{2}\rightharpoonup0\ \ \text{in}\ L^{\frac{N}{N-2}}(\R^N)
$$
leads to the desired result.
\end{proof}

The global compactness lemma plays an important role in the study of critical
problems, see \cite{Sm, Wi} for a single elliptic equation with a local interaction,
\cite{LL1, PPW} for local system and \cite{GSYZ} for a nonlocal Choquard
equation. For $r\in\R^{+}$ and $z\in \R^{N}$, we denote the rescaling
$$
(u,v)_{r,z}=r^{\frac{N-2}{2}}(u(rx+z),v(rx+z)).
$$
Inspired by the above results, we can establish the global compactness
lemma for nonlocal type systems.

\begin{lem}\label{lem4.2}
Suppose that $V_1,\,V_2\in L^{\frac{N}{2}}(\R^{N})\cap L_{\text{\rm loc}}^{\infty}(\R^N)$
and $\{(u_{n},v_{n})\}\subset H$ is a $(PS)_{d}$ sequence for the functional
$J$. Then there exist a number $k\in \N$, a solution $(u^{0},v^{0})$ of \eqref{eq2.1},
nonzero solutions $(u^{1},v^{1}),\cdots,(u^{k},v^{k})$ of \eqref{eq2.3},
sequences of points $\{z_{n}^{1}\},\cdots,\{z_{n}^{k}\}$ in $\R^N$ and radii
$\{r_{n}^{1}\},\cdots,\{r_{n}^{k}\}$ such that, up to a subsequence,
$$
(u_{n}^{0},v_{n}^{0}):= (u_{n},v_{n})\rightharpoonup (u^{0},v^{0})\ \ \mbox{in}\ H
$$
and
$$
(u_{n}^{j},v_{n}^{j}):= (u_{n}^{j-1}-u^{j-1},v_{n}^{j-1}-v^{j-1})_{r_{n}^{j},z_{n}^{j}}
\rightharpoonup (u^{j},v^{j})\ \ \mbox{in}\ H,\ \ j=1,...,k.
$$
Moreover, we have
$$
\lim_{n\to\infty}\|(u_{n},v_{n})\|^{2}=\sum_{j=0}^{k}\|(u^{j},v^{j})\|^{2}
$$
and
$$
\lim_{n\to\infty}J(u_{n},v_{n})=J(u^{0},v^{0})+\sum_{j=1}^{k}J_{\infty}(u^{j},v^{j}).
$$
\end{lem}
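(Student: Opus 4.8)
The plan is to adapt Struwe's iterated blow‑up scheme \cite{Sm} to the coupled nonlocal setting, using the Br\'ezis--Lieb type decomposition of Lemma \ref{lem4.1} to split the energy along weakly convergent (and suitably rescaled) subsequences, and invoking the classification of Theorem \ref{thm3.1+} through Corollary \ref{cor3.2} to force the scheme to terminate. \textbf{Step 1 (boundedness and the regular part).} From the identity
$$
J(u_{n},v_{n})-\frac14\langle J'(u_{n},v_{n}),(u_{n},v_{n})\rangle
=\frac14\|(u_{n},v_{n})\|^{2}+\frac14\int_{\R^N}\big(V_1u_{n}^{2}+V_2v_{n}^{2}\big)dx
\geq\frac14\|(u_{n},v_{n})\|^{2}
$$
and the fact that its left-hand side equals $d+o(1)+o(\|(u_{n},v_{n})\|)$, the $(PS)_{d}$ sequence is bounded; passing to a subsequence, $(u_{n},v_{n})\rightharpoonup(u^{0},v^{0})$ weakly in $H$, strongly in $L^{2}_{\mathrm{loc}}(\R^N)$, and a.e. By Proposition \ref{pro1.1} the nonlocal terms of $J'$ are weakly continuous on bounded sets, and since $V_i\in L^{N/2}(\R^N)$ the maps $u\mapsto\int_{\R^N}V_iu\varphi$ are weakly continuous (split $V_i=V_i\chi_{\{|V_i|>M\}}+V_i\chi_{\{|V_i|\le M\}}$, using smallness of the first piece in $L^{N/2}$, $L^{2}_{\mathrm{loc}}$ convergence for the second, and $\int_{\R^N\setminus B_R}|V_i|^{N/2}\to0$); hence $(u^{0},v^{0})$ solves \eqref{eq2.1}. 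The same splitting gives $\int_{\R^N}V_i(u_{n}-u^{0})^{2}\to0$, so combining the gradient Br\'ezis--Lieb lemma with Lemma \ref{lem4.1} yields
$$
\|(u_{n}-u^{0},v_{n}-v^{0})\|^{2}=\|(u_{n},v_{n})\|^{2}-\|(u^{0},v^{0})\|^{2}+o(1),\qquad
J_{\infty}(u_{n}-u^{0},v_{n}-v^{0})=d-J(u^{0},v^{0})+o(1),
$$
and, testing against arbitrary pairs, $J_{\infty}'(u_{n}-u^{0},v_{n}-v^{0})\to0$ strongly. Thus $(u_{n}-u^{0},v_{n}-v^{0})$ is a $(PS)$ sequence for $J_{\infty}$ with $(u_{n}-u^{0},v_{n}-v^{0})\rightharpoonup 0$.

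\textbf{Step 2 (extracting a bubble).} If $(u_{n}-u^{0},v_{n}-v^{0})\to0$ strongly, the lemma holds with $k=0$. Otherwise $\langle J_{\infty}'(u_{n}-u^{0},v_{n}-v^{0}),(u_{n}-u^{0},v_{n}-v^{0})\rangle\to0$, together with Proposition \ref{pro1.1} and the Sobolev inequality, gives $\|(u_{n}-u^{0},v_{n}-v^{0})\|^{2}\le C\|(u_{n}-u^{0},v_{n}-v^{0})\|^{4}+o(1)$, so along a subsequence $\liminf_{n}\big(|u_{n}-u^{0}|_{2^{\ast}}+|v_{n}-v^{0}|_{2^{\ast}}\big)>0$, where $2^{\ast}=\frac{2N}{N-2}$. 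Write $\rho_{n}:=|u_{n}-u^{0}|^{2^{\ast}}+|v_{n}-v^{0}|^{2^{\ast}}$, fix $\varepsilon_{0}>0$ small, set $Q_{n}(\lambda)=\sup_{z\in\R^N}\int_{B_{\lambda}(z)}\rho_{n}\,dx$, and choose $r_{n}^{1}>0$, $z_{n}^{1}\in\R^N$ with $Q_{n}(r_{n}^{1})=\int_{B_{r_{n}^{1}}(z_{n}^{1})}\rho_{n}\,dx=\varepsilon_{0}$. Using the scale invariance of $J_{\infty}$ under $(u,v)\mapsto(u,v)_{r,z}=r^{\frac{N-2}{2}}(u(rx+z),v(rx+z))$, put $(u_{n}^{1},v_{n}^{1}):=(u_{n}-u^{0},v_{n}-v^{0})_{r_{n}^{1},z_{n}^{1}}$; then $(u_{n}^{1},v_{n}^{1})$ is again a bounded $(PS)$ sequence for $J_{\infty}$ with $\int_{B_{1}}(|u_{n}^{1}|^{2^{\ast}}+|v_{n}^{1}|^{2^{\ast}})=\varepsilon_{0}=\sup_{z}\int_{B_{1}(z)}(|u_{n}^{1}|^{2^{\ast}}+|v_{n}^{1}|^{2^{\ast}})$. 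Up to a subsequence $(u_{n}^{1},v_{n}^{1})\rightharpoonup(u^{1},v^{1})$, which solves \eqref{eq2.3} by weak continuity of the nonlocal terms; here the smallness of $\varepsilon_{0}$ excludes concentration (atoms of the associated critical measure) while the normalization $\int_{B_{1}}(\cdots)=\varepsilon_{0}>0$ excludes vanishing, so a concentration--compactness argument at the unit scale forces $(u^{1},v^{1})\neq(0,0)$. Finally, testing \eqref{eq2.3} against $(u^{1})^{-}$ and $(v^{1})^{-}$ gives $\int|\nabla(u^{1})^{-}|^{2}=\int|\nabla(v^{1})^{-}|^{2}=0$, so $u^{1},v^{1}\ge0$ and $(u^{1},v^{1})$ is a nonzero solution of \eqref{eq2.3}.

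\textbf{Step 3 (iteration, termination and conclusion).} Replace $(u_{n},v_{n})$, $(u^{0},v^{0})$, $J$ by $(u_{n}^{1},v_{n}^{1})$, $(u^{1},v^{1})$, $J_{\infty}$ and repeat Step 2; inductively one obtains $(u_{n}^{j},v_{n}^{j}):=(u_{n}^{j-1}-u^{j-1},v_{n}^{j-1}-v^{j-1})_{r_{n}^{j},z_{n}^{j}}\rightharpoonup(u^{j},v^{j})$, a nonzero solution of \eqref{eq2.3}, and, by scale invariance of $J_{\infty}$, Lemma \ref{lem4.1} and the gradient Br\'ezis--Lieb lemma,
$$
\|(u_{n}^{j},v_{n}^{j})\|^{2}=\|(u_{n}^{j-1},v_{n}^{j-1})\|^{2}-\|(u^{j-1},v^{j-1})\|^{2}+o(1),\quad
J_{\infty}(u_{n}^{j},v_{n}^{j})=J_{\infty}(u_{n}^{j-1},v_{n}^{j-1})-J_{\infty}(u^{j-1},v^{j-1})+o(1).
$$
Here the classification is decisive: by Corollary \ref{cor3.2} any nonzero solution of \eqref{eq2.3} with both components positive is a ground state with $J_{\infty}=c_{\infty}=\frac14(k_{0}+l_{0})S_{H,L}^{2}$, while a nonzero solution of \eqref{eq2.3} with a vanishing component reduces, after rescaling (via \cite{DY}), to the scalar critical equation and has energy $\min\{c_{1\infty},c_{2\infty}\}=\frac14\min\{\alpha_{1}^{-1},\alpha_{2}^{-1}\}S_{H,L}^{2}>c_{\infty}$ by Lemma \ref{lem2.3}; hence $J_{\infty}(u^{j},v^{j})\ge c_{\infty}>0$ for every $j$. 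Since $J_{\infty}(u_{n}^{j},v_{n}^{j})=\frac14\|(u_{n}^{j},v_{n}^{j})\|^{2}+o(1)\ge o(1)$, telescoping the energy identities shows the process stops after some $k\le(d-J(u^{0},v^{0}))/c_{\infty}$ steps with $(u_{n}^{k+1},v_{n}^{k+1})\to0$ in $H$; telescoping the two identities then yields $\lim_{n}\|(u_{n},v_{n})\|^{2}=\sum_{j=0}^{k}\|(u^{j},v^{j})\|^{2}$ and $\lim_{n}J(u_{n},v_{n})=J(u^{0},v^{0})+\sum_{j=1}^{k}J_{\infty}(u^{j},v^{j})$. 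The technical heart of the argument, and what makes it heavier than the scalar result of \cite{GSYZ}, is the concentration--compactness step in Step 2 that produces a \emph{nonzero} weak limit of the rescaled remainder, together with the fact that the termination of the induction rests on knowing the exact least-energy level $c_{\infty}$ of the coupled limit system and on ruling out, through Theorem \ref{thm3.1+} and Corollary \ref{cor3.2}, any nonzero solution of \eqref{eq2.3} below that level; one must also check that the genuinely nonlocal interactions between bubbles living at separated scales or centers (terms like $\int_{\R^N}(|x|^{-4}\ast|B_{i}|^{2})|B_{j}|^{2}$ with $i\neq j$) are absorbed into the $o(1)$ error, which is guaranteed inductively by the iterative scheme.
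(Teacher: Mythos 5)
Your overall scheme — boundedness, Br\'ezis--Lieb splitting of the potential and nonlocal terms, passage to $J_\infty$, extraction of a rescaled bubble, iteration with a $c_\infty$-quantum of energy per step — is the same Struwe-type blow-up argument the paper follows, and Steps~1 and~3 essentially match the paper's proof. However, there is one genuine gap and one misattribution worth flagging.

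The gap is in Step~2, precisely at the heart of the lemma: you assert that ``the smallness of $\varepsilon_0$ excludes concentration \ldots while the normalization excludes vanishing, so a concentration--compactness argument at the unit scale forces $(u^1,v^1)\neq(0,0)$.'' This is exactly the nontrivial content of the lemma, and you have not actually produced an argument. To make it work you would need, at minimum: (i) a quantitative lower bound on the mass of any atom $\nu_j\delta_{x_j}$ in the limit measure associated with a bounded $(PS)$ sequence for the nonlocal functional $J_\infty$ that converges weakly to zero, proved from the Euler--Lagrange relation for $J_\infty$ (not merely the abstract dichotomy), and then (ii) a choice of $\varepsilon_0$ strictly below that threshold, combined with the weak-$\ast$ semicontinuity of measures, to rule out atoms in $\bar B_1$ and derive a contradiction with $\int_{B_1(0)}\rho_n=\varepsilon_0$. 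The paper avoids this by normalizing the \emph{gradient} concentration function to the explicit level $2c_\infty/L$ (with $L$ the covering number) and then carrying out Struwe's harmonic-extension cutoff argument: one constructs $\widehat{\varphi}_n,\widehat{\psi}_n$ harmonic in an annulus with $\widehat{\varphi}_n,\widehat{\psi}_n\to 0$, glues $\widetilde{\varphi}_n=(\widetilde{u}_n-u)\chi_{B_\rho}+\widehat{\varphi}_n\chi_{B_3\setminus B_\rho}$, tests $J_\infty'(\widetilde u_n,\widetilde v_n)$ against $(\widetilde{\varphi}_n,\widetilde{\psi}_n)$, and — using the Nehari bound $\int\!\!\int\cdots\le\frac{1}{4c_\infty}\|(\varphi_n,\psi_n)\|^4$ together with $\int_{B_\rho}(|\nabla\widetilde u_n|^2+|\nabla\widetilde v_n|^2)\le L\cdot\frac{2c_\infty}{L}=2c_\infty<4c_\infty$ — concludes $\int_{B_\rho}(|\nabla(\widetilde u_n-u)|^2+|\nabla(\widetilde v_n-v)|^2)\to0$, which contradicts the normalization at level $2c_\infty/L$ if $(u,v)=(0,0)$. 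This detailed PDE estimate is the bulk of the paper's proof; your sketch does not replace it.

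The misattribution: you write that ``the classification is decisive'' for the termination of the iteration and invoke Corollary~\ref{cor3.2} and \cite{DY} to compute $J_\infty(u^j,v^j)$. In fact the paper's proof of Lemma~\ref{lem4.2} does not use Theorem~\ref{thm3.1+} or Corollary~\ref{cor3.2} at all: any nonzero solution $(u^j,v^j)$ of \eqref{eq2.3} automatically lies on $\mathcal{N}_\infty$, hence $J_\infty(u^j,v^j)\geq c_\infty>0$, and that alone forces termination. The classification enters only later, in Corollary~\ref{cor4.3}, to pin down the possible bubble energies and deduce relative compactness in the prescribed energy window. Your remark about positivity of $u^1,v^1$ (testing against $(u^1)^-,(v^1)^-$) is correct but not needed for the statement of Lemma~\ref{lem4.2}.
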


\begin{proof}[\bf Proof.]
Let $\{(u_{n},v_{n})\}$ be a $(PS)_{d}$ sequence for $J$, then it is bounded
in $H$. We assume up to a subsequence that
$(u_{n},v_{n})\rightharpoonup (u^{0},v^{0})$ in $H$, $(u_{n},v_{n})\to (u^{0},v^{0})$
almost everywhere in $\R^N$ and $(u^{0},v^{0})$ is a weak solution of \eqref{eq2.1}.
Setting $(\overline{u}_{n}, \overline{v}_{n}):=(u_{n}-u^{0}, v_{n}-v^{0})$, we
have $(\overline{u}_{n},\overline{v}_{n})\rightharpoonup(0,0)$ in $H$. Using
this together with the Br\'{e}zis-Lieb Lemma \cite{BL1} and  Lemma \ref{lem4.1},
we deduce that
\begin{align}\label{eq4.7}
\|(\overline{u}_{n},\overline{v}_{n})\|^{2}
&=\|(u_{n},v_{n})\|^{2}-\|(u^{0},v^{0})\|^{2}+o_{n}(1),\\[1mm]
J(\overline{u}_{n},\overline{v}_{n})&=J(u_{n},v_{n})-J(u^{0},v^{0})+o_{n}(1)\notag
\end{align}
and
$$
\hspace{1.1cm}J'(\overline{u}_{n},\overline{v}_{n})
=J'(u_{n},v_{n})-J'(u^{0},v^{0})+o_{n}(1)=o_{n}(1).
$$

Since $V_1\in L^{\frac{N}{2}}(\R^N)$, for any $\varepsilon> 0$ there exists
a number $r=r(\varepsilon)> 0$ such that
$$
\Big(\int_{\R^N\backslash B_r(0)}|V_1|^{\frac{N}{2}}dx\Big)^{\frac{2}{N}}
<\varepsilon.
$$
For such an $r$, we can find $n_{0}\in\N$ such that
$$
\int_{B_r(0)}\overline{u}_{n}^{2}dx<\varepsilon,\ \ \mbox{for}\ n\geq n_0.
$$
Then, using $V_1\in L_{\text{loc}}^{\infty}(\R^N)$ and the H\"older inequality,
we have
$$
\aligned
\Big|\int_{\R^N}V_1(x)\overline{u}_{n}^{2}dx\Big|
&=\Big|\int_{B_r(0)}V_1(x)\overline{u}_{n}^{2}dx
+\int_{\R^N\backslash B_r(0)}V_1(x)\overline{u}_{n}^{2}dx\Big|\\
&\leq|V_1|_{\infty, B_r(0)}\int_{B_r(0)}\overline{u}_{n}^{2}dx+
\Big(\int_{\R^N\backslash B_r(0)}|V_1|^{\frac{N}{2}}dx\Big)^{\frac{2}{N}}
\Big(\int_{\R^{N}}|\overline{u}_{n}|^{\frac{2N}{N-2}}dx\Big)^{\frac{N-2}{N}}\\
&<C\varepsilon
\endaligned
$$
for $n\geq n_{0}$, which means that
$$
\lim_{n\rightarrow\infty}\int_{\R^N}V_1(x)\overline{u}_{n}^{2}dx=0.
$$
Similarly, we also have
$$
 \lim_{n\rightarrow\infty}\int_{\R^N}V_2(x)\overline{v}_{n}^{2}dx=0.
$$
Then
\begin{equation}\label{eq4.8}
J_{\infty}(\overline{u}_{n},\overline{v}_{n})
=J(\overline{u}_{n},\overline{v}_{n})+o_{n}(1)
=J(u_{n},v_{n})-J(u^{0},v^{0})+o_{n}(1)
\end{equation}
and
\begin{equation}\label{eq4.9}
J_{\infty}'(\overline{u}_{n},\overline{v}_{n})
=J'(\overline{u}_{n},\overline{v}_{n})+o_{n}(1)=o_{n}(1).
\end{equation}

If $(\overline{u}_{n},\overline{v}_{n})\rightarrow(0,0)$ in $H$ then we are
done: $k$ is just 0 and $(u_{n}^{0},v_{n}^{0}):=(u_{n},v_{n})$. Now we
consider the case where $(\overline{u}_{n},\overline{v}_{n})\nrightarrow(0,0)$
in $H$. Assume up to a subsequence that
$\lim_{n\to\infty}\|(\overline{u}_{n},\overline{v}_{n})\|^{2}=b>0$ and, by
\eqref{eq4.9}, there also holds
$$
\lim_{n\to\infty}\int_{\R^N}\int_{\R^N}
\frac{\alpha_1|\overline{u}_{n}^{+}(x)|^{2}
|\overline{u}_{n}^{+}(y)|^{2}+\alpha_2|\overline{v}_{n}^{+}(x)|^{2}
|\overline{v}_{n}^{+}(y)|^{2}+2\beta|\overline{u}_{n}^{+}(x)|^{2}
|\overline{v}_{n}^{+}(y)|^{2}}
{|x-y|^{4}}dxdy
=b.
$$
For $t_n > 0$ defined by
$$
t_n^{2}=\frac{\displaystyle\|(\overline{u}_{n},\overline{v}_{n})\|^{2}}
{\displaystyle\int_{\R^N}\int_{\R^N}
\frac{\alpha_1|\overline{u}_{n}^{+}(x)|^{2}
|\overline{u}_{n}^{+}(y)|^{2}+\alpha_2|\overline{v}_{n}^{+}(x)|^{2}
|\overline{v}_{n}^{+}(y)|^{2}+2\beta|\overline{u}_{n}^{+}(x)|^{2}
|\overline{v}_{n}^{+}(y)|^{2}}
{|x-y|^{4}}dxdy},
$$
we have $(t_n\overline{u}_{n},t_n\overline{v}_{n})\in\mathcal{N}_{\infty}$
and $t_n^{2}=1+o_{n}(1)$ as $n\rightarrow\infty$. Then
$$
c_{\infty}\leq J_{\infty}(t_n\overline{u}_{n},t_n\overline{v}_{n})
=\frac{1}{4}t_n^{2}\|(\overline{u}_{n},\overline{v}_{n})\|^{2}
=\frac{1}{4}b+o_{n}(1),
$$
which implies that $b\geq 4c_{\infty}$.

{\bf Claim:} There exist sequences $\{r_{n}\}\subset\R^+$ and
$\{z_{n}\}\subset\R^N$ such that
$$
(\widetilde{u}_{n},\widetilde{v}_{n})
=(\overline{u}_{n},\overline{v}_{n})_{r_{n},z_{n}}\rightharpoonup (u,v)
\ \ \mbox{in}\ H,
$$
where $(u,v)$ is a nonzero solution of \eqref{eq2.3}.

We see from \eqref{eq4.9} that
$$
J_{\infty}(\overline{u}_{n},\overline{v}_{n})
=\frac{1}{4}\|(\overline{u}_{n},\overline{v}_{n})\|^{2}+o_{n}(1).
$$
Define the Levy concentration function of $(\overline{u}_{n}, \overline{v}_{n})$
by
$$
Q_{n}(r):=\sup_{z\in\R^N}\int_{B_{r}(z)}
(|\nabla \overline{u}_{n}|^{2}+|\nabla \overline{v}_{n}|^{2})dx.
$$
Let $L$ be the least number of balls with radius 1 covering a ball of radius 2.
We see from
$$
\lim_{n\to\infty}\|(\overline{u}_{n},\overline{v}_{n})\|^{2}=b\geq 4c_{\infty}
$$
that, for large $n$, there exist $r_{n}\in\R^+$ and $z_{n}\in\R^N$ such that
$$
\sup_{z\in\R^N}\int_{B_{r}(z)}(|\nabla \overline{u}_{n}|^{2}+|\nabla \overline{v}_{n}|^{2})dx
=\int_{B_{r_{n}}(z_{n})}(|\nabla \overline{u}_{n}|^{2}+|\nabla \overline{v}_{n}|^{2})dx
=\frac{2c_{\infty}}{L}.
$$
Setting $(\widetilde{u}_{n},\widetilde{v}_{n}):=(\overline{u}_{n},\overline{v}_{n})_{r_{n},z_{n}}$,
we have
\begin{equation}\label{eq4.10}
\sup_{z\in\R^N}\int_{B_{1}(z)}(|\nabla \widetilde{u}_{n}|^{2}+|\nabla \widetilde{v}_{n}|^{2})dx
=\int_{B_{1}(0)}(|\nabla \widetilde{u}_{n}|^{2}+|\nabla \widetilde{v}_{n}|^{2})dx
=\frac{2c_{\infty}}{L}
\end{equation}
and $\{(\widetilde{u}_{n},\widetilde{v}_{n})\}$ is bounded in $H$. Assume
by extracting a subsequence that
$(\widetilde{u}_{n},\widetilde{v}_{n})\rightharpoonup (u,v)$ in $H$ and
$(\widetilde{u}_{n},\widetilde{v}_{n})\rightarrow (u,v)$ almost everywhere
in $\R^N$. The scale invariance under translation and dilation implies that
$$
\begin{aligned}
\|(\overline{u}_{n},\overline{v}_{n})\|
&=\|(\widetilde{u}_{n},\widetilde{v}_{n}\|,\\
\int_{\R^N}\int_{\R^N}\frac{|\overline{u}_{n}^{+}(x)|^2
|\overline{u}_{n}^{+}(y)|^{2}}{|x-y|^{4}}dxdy
&=\int_{\R^N}\int_{\R^N}
\frac{|\widetilde{u}_{n}^{+}(x)|^{2}|\widetilde{u}_{n}^{+}(y)|^{2}}{|x-y|^{4}}dxdy,\\
\int_{\R^N}\int_{\R^N}\frac{|\overline{v}_{n}^{+}(x)|^{2}|\overline{v}_{n}^{+}(y)|^{2}}
{|x-y|^{4}}dxdy
&=\int_{\R^N}\int_{\R^N}
\frac{|\widetilde{v}_{n}^{+}(x)|^{2}|\widetilde{v}_{n}^{+}(y)|^{2}}{|x-y|^{4}}dxdy,\\
\int_{\R^N}\int_{\R^N}\frac{|\overline{u}_{n}^{+}(x)|^{2}|\overline{v}_{n}^{+}(y)|^{2}}
{|x-y|^{4}}dxdy
&=\int_{\R^N}\int_{\R^N}
\frac{|\widetilde{u}_{n}^{+}(x)|^{2}|\widetilde{v}_{n}^{+}(y)|^{2}}{|x-y|^{4}}dxdy.
\end{aligned}
$$
Then we have
$$
J_{\infty}(\widetilde{u}_{n},\widetilde{v}_{n})
=J_{\infty}(\overline{u}_{n},\overline{v}_{n})+o_{n}(1)
$$
and
$$
\|J_{\infty}'(\widetilde{u}_{n},\widetilde{v}_{n})\|
=\|J_{\infty}'(\overline{u}_{n},\overline{v}_{n})\|=o_{n}(1).
$$
Therefore, $(u, v)$ is a solution of \eqref{eq2.3}.

Next we show that $(u, v) \neq (0, 0)$. In fact, using the arguments in
\cite{Sm}, we can find $\rho\in[1, 2]$ such that the solution
$\widehat{\varphi}_{n}$ of the boundary value problem
$$
\left\{\begin{array}{l}
-\Delta \varphi=0\ \ \mbox{in}\ B_{3}(0)\backslash B_{\rho}(0),\\[1mm]
\varphi|_{\partial B_{\rho}(0)}=\widetilde{u}_{n}-u,\
\varphi|_{\partial B_{3}(0)}=0
\end{array}
\right.
$$
satisfies $\widehat{\varphi}_{n}\rightarrow0$ in
$H^{1}(B_{3}(0)\backslash B_{\rho}(0))$ and the solution
$\widehat{\psi}_{n}$ of the problem in which the boundary condition
$\varphi|_{\partial B_{\rho}(0)}=\widetilde{u}_{n}-u$ is replaced with
$\varphi|_{\partial B_{\rho}(0)}=\widetilde{v}_{n}-v$ also satisfies
$\widehat{\psi}_{n}\rightarrow0$ in $H^{1}(B_{3}(0)\backslash B_{\rho}(0))$.
Define
$$
\widetilde{\varphi}_{n}(x)=
\left\{\begin{array}{ll}
\widetilde{u}_{n}(x)-u(x), & x\in B_{\rho}(0),\\[1mm]
\widehat{\varphi}_{n}, & x\in B_{3}(0)\backslash B_{\rho}(0),\\[1mm]
0, & x\in\R^N\backslash B_{3}(0).
\end{array}\right.
$$
Replace $\widetilde{u}_{n}$ and $\widehat{\varphi}_{n}$ with
$\widetilde{v}_{n}$ and $\widehat{\psi}_{n}$ respectively in the definition
of $\widetilde{\varphi}_{n}$, and denote this resulted new function by
$\widetilde{\psi}_{n}$. Setting
$$
\varphi_{n}=r_{n}^{-\frac{N-2}{2}}\widetilde{\varphi}_{n}\Big(\frac{\cdot-z_{n}}{r_{n}}\Big)
\ \ \mbox{and}
\ \ \psi_{n}=r_{n}^{-\frac{N-2}{2}}\widetilde{\psi}_{n}\Big(\frac{\cdot-z_{n}}{r_{n}}\Big),
$$
we have
\begin{equation}\label{eq4.11}
\aligned
\int_{\R^N}(|\nabla \varphi_{n}|^{2}+|\nabla \psi_{n}|^{2})dx
&=\int_{\R^N}(|\nabla \widetilde{\varphi}_{n}|^{2}+|\nabla \widetilde{\psi}_{n}|^{2})dx\\
&=\int_{B_{\rho}(0)}(|\nabla (\widetilde{u}_{n}-u)|^{2}+|\nabla (\widetilde{v}_{n}-v)|^{2})dx+o_{n}(1)\\
&=\int_{B_{\rho}(0)}(|\nabla \widetilde{u}_{n}|^{2}+|\nabla \widetilde{v}_{n}|^{2})dx
-\int_{B_{\rho}(0)}(|\nabla u|^{2}+|\nabla v|^{2})dx+o_{n}(1)\\
&\leq\int_{B_{\rho}(0)}(|\nabla \widetilde{u}_{n}|^{2}+|\nabla \widetilde{v}_{n}|^{2})dx+o_{n}(1).
\endaligned
\end{equation}
Since $\widehat{\varphi}_{n}\rightarrow0$ and $\widehat{\psi}_{n}\rightarrow0$
in $H^{1}(B_{3}(0)\backslash B_{\rho}(0))$, the scale invariance implies that
\begin{equation}\label{eq4.12}
\aligned
o_{n}(1)
&=\langle J'_{\infty}(\overline{u}_{n},\overline{v}_{n}),(\varphi_{n},\psi_{n})\rangle
=\langle J'_{\infty}(\widetilde{u}_{n},\widetilde{v}_{n}),(\widetilde{\varphi}_{n},\widetilde{\psi}_{n})\rangle\\
&=\int_{B_{\rho}(0)}(\nabla \widetilde{u}_{n}\nabla (\widetilde{u}_{n}-u)
+\nabla \widetilde{v}_{n}\nabla (\widetilde{v}_{n}-v))dx\\
&\quad\,-\alpha_1\int_{B_{\rho}(0)}\int_{\R^N}
\frac{|\widetilde{u}_{n}^{+}(x)|^{2}\widetilde{u}_{n}^{+}(y)(\widetilde{u}_{n}-u)(y)}{|x-y|^{4}}dxdy
-\beta\int_{B_{\rho}(0)}\int_{\R^N}
\frac{|\widetilde{v}_{n}^{+}(x)|^{2}\widetilde{u}_{n}^{+}(y)(\widetilde{u}_{n}-u)(y)}{|x-y|^{4}}dxdy\\
&\quad\,-\alpha_2\int_{B_{\rho}(0)}\int_{\R^N}
\frac{|\widetilde{v}_{n}^{+}(x)|^{2}\widetilde{v}_{n}^{+}(y)(\widetilde{v}_{n}-v)(y)}{|x-y|^{4}}dxdy
-\beta\int_{B_{\rho}(0)}\int_{\R^N}
\frac{|\widetilde{u}_{n}^{+}(x)|^{2}\widetilde{v}_{n}^{+}(y)(\widetilde{v}_{n}-v)(y)}{|x-y|^{4}}dxdy\\
&\quad\,+o_{n}(1).
\endaligned
\end{equation}
Since $\{\widetilde{u}_{n}^{2}\}$ is bounded in $L^{\frac{N}{N-2}}(\R^N)$
and $\widetilde{u}_{n}\rightarrow u$ almost everywhere in $\R^N$, we
have $|\widetilde{u}_{n}^{+}|^{2}\rightharpoonup |u^{+}|^{2}$ in
$L^{\frac{N}{N-2}}(\R^N)$. By the Hardy-Littlewood-Sobolev inequality,
the Riesz potential defines a linear continuous map from
$L^{\frac{N}{N-2}}(\R^N)$ to $L^{\frac{N}{2}}(\R^N)$ and then
$$
\int_{\R^N}
\frac{|\widetilde{u}_{n}^{+}(x)|^{2}}{|x-y|^{4}}dx\rightharpoonup
\int_{\R^N}\frac{|u^{+}(x)|^{2}}{|x-y|^{4}}dx\ \ \mbox{in}\ L^{\frac{N}{2}}(\R^N).
$$
Combining this with $\widetilde{u}^{+}_{n}\rightharpoonup u^{+}$ in
$L^{\frac{2N}{N-2}}(\R^N)$ leads to
$$
\widetilde{u}_{n}^{+}(y)
\int_{\R^N}\frac{|\widetilde{u}_{n}^{+}(x)|^{2}}{|x-y|^{4}}dx \rightharpoonup
u^{+}(y)\int_{\R^N}\frac{|u^{+}(x)|^{2}}{|x-y|^{4}}dx\ \ \mbox{in} \ L^{\frac{2N}{N+2}}(\R^N),
$$
which implies that
\begin{align*}
\lim_{n\to\infty}\int_{B_{\rho}(0)}\int_{\R^N}
\frac{|\widetilde{u}_{n}^{+}(x)|^{2}\widetilde{u}_{n}^{+}(y)u(y)}{|x-y|^{4}}dxdy
&=\int_{B_{\rho}(0)}\int_{\R^N}
\frac{|u^{+}(x)|^{2}u^{+}(y)u(y)}{|x-y|^{4}}dxdy\\
&=\int_{B_{\rho}(0)}\int_{\R^N}
\frac{|u^{+}(x)|^{2}|u^{+}(y)|^{2}}{|x-y|^{4}}dxdy.
\end{align*}
Then
\begin{equation}\label{eq4.13}
\aligned
&\quad\,\int_{B_{\rho}(0)}\int_{\R^N}
\frac{|\widetilde{u}_{n}^{+}(x)|^{2}\widetilde{u}_{n}^{+}(y)(\widetilde{u}_{n}-u)(y)}{|x-y|^{4}}dxdy\\
&=\int_{B_{\rho}(0)}\int_{\R^N}
\frac{|\widetilde{u}_{n}^{+}(x)|^{2}|\widetilde{u}_{n}^{+}(y)|^{2}}{|x-y|^{4}}dxdy-
\int_{B_{\rho}(0)}\int_{\R^N}
\frac{|u^{+}(x)|^{2}|u^{+}(y)|^{2}}{|x-y|^{4}}dxdy+o_{n}(1)\\
&=\int_{B_{\rho}(0)}\int_{\R^N}
\frac{|(\widetilde{u}_{n}-u)^{+}(x)|^{2}|(\widetilde{u}_{n}-u)^{+}(y)|^{2}}{|x-y|^{4}}dxdy+o_{n}(1).
\endaligned
\end{equation}
Similarly, we also have
\begin{equation}\label{eq4.14}
\aligned
\int_{B_{\rho}(0)}\int_{\R^N}
\frac{|\widetilde{v}_{n}^{+}(x)|^{2}\widetilde{u}_{n}^{+}(y)(\widetilde{u}_{n}-u)(y)}{|x-y|^{4}}dxdy
=\int_{B_{\rho}(0)}\int_{\R^N}
\frac{|(\widetilde{v}_{n}-v)^{+}(x)|^{2}|(\widetilde{u}_{n}-u)^{+}(y)|^{2}}{|x-y|^{4}}dxdy
+o_{n}(1),
\endaligned
\end{equation}
\begin{equation}\label{eq4.15}
\aligned
\int_{B_{\rho}(0)}\int_{\R^N}
\frac{|\widetilde{v}_{n}^{+}(x)|^{2}\widetilde{v}_{n}^{+}(y)(\widetilde{v}_{n}-v)(y)}{|x-y|^{4}}dxdy
=\int_{B_{\rho}(0)}\int_{\R^N}
\frac{|(\widetilde{v}_{n}-v)^{+}(x)|^{2}|(\widetilde{v}_{n}-v)^{+}(y)|^{2}}{|x-y|^{4}}dxdy
+o_{n}(1)
\endaligned
\end{equation}
and
\begin{equation}\label{eq4.16}
\aligned
\int_{B_{\rho}(0)}\int_{\R^N}
\frac{|\widetilde{u}_{n}^{+}(x)|^{2}\widetilde{v}_{n}^{+}(y)(\widetilde{v}_{n}-v)(y)}{|x-y|^{4}}dxdy
=\int_{B_{\rho}(0)}\int_{\R^N}
\frac{|(\widetilde{u}_{n}-u)^{+}(x)|^{2}|(\widetilde{v}_{n}-v)^{+}(y)|^{2}}{|x-y|^{4}}dxdy+o_{n}(1).
\endaligned
\end{equation}
Substituting \eqref{eq4.13}$-$\eqref{eq4.16} into \eqref{eq4.12} and using
$(\widetilde{u}_{n},\widetilde{v}_{n})\rightharpoonup(u, v)$ in $H$,
we obtain
$$
\aligned
o_{n}(1)
&=\int_{B_{\rho}(0)}(|\nabla (\widetilde{u}_{n}-u)|^{2}+|\nabla (\widetilde{v}_{n}-v)|^{2})dx\\
&\quad\,-\alpha_1\int_{B_{\rho}(0)}\int_{\R^N}
\frac{|(\widetilde{u}_{n}-u)^{+}(x)|^{2}|(\widetilde{u}_{n}-u)^{+}(y)|^{2}}{|x-y|^{4}}dxdy
-\beta\int_{B_{\rho}(0)}\int_{\R^N}
\frac{|(\widetilde{v}_{n}-v)^{+}(x)|^{2}|(\widetilde{u}_{n}-u)^{+}(y)|^{2}}{|x-y|^{4}}dxdy\\
&\quad\,-\alpha_2\int_{B_{\rho}(0)}\int_{\R^N}
\frac{|(\widetilde{v}_{n}-v)^{+}(x)|^{2}|(\widetilde{v}_{n}-v)^{+}(y)|^{2}}{|x-y|^{4}}dxdy
-\beta\int_{B_{\rho}(0)}\int_{\R^N}
\frac{|(\widetilde{u}_{n}-u)^{+}(x)|^{2}|(\widetilde{v}_{n}-v)^{+}(y)|^{2}}{|x-y|^{4}}dxdy.
\endaligned$$
Using $\widehat{\varphi}_{n}\rightarrow0$ and $\widehat{\psi}_{n}\rightarrow0$
in $H^{1}(B_{3}(0)\backslash B_{\rho}(0))$ and the scale invariance again
\begin{equation}\label{eq4.17}
\aligned
o_{n}(1)
=&\int_{\R^N}(|\nabla \widetilde{\varphi}_{n}|^{2}+|\nabla \widetilde{\psi}_{n}|^{2})dx
-\alpha_1\int_{\R^N}\int_{\R^N}
\frac{|\widetilde{\varphi}_{n}^{+}(x)|^{2}|\widetilde{\varphi}_{n}^{+}(y)|^{2}}{|x-y|^{4}}dxdy\\
&-\alpha_2\int_{\R^N}\int_{\R^N}
\frac{|\widetilde{\psi}_{n}^{+}(x)|^{2}|\widetilde{\psi}_{n}^{+}(y)|^{2}}{|x-y|^{4}}dxdy
-2\beta\int_{\R^N}\int_{\R^N}
\frac{|\widetilde{\varphi}_{n}^{+}(x)|^{2}|\widetilde{\psi}_{n}^{+}(y)|^{2}}{|x-y|^{4}}dxdy\\
=&\int_{\R^N}(|\nabla \varphi_{n}|^{2}+|\nabla \psi_{n}|^{2})dx
-\alpha_1\int_{\R^N}\int_{\R^N}
\frac{|\varphi_{n}^{+}(x)|^{2}|\varphi_{n}^{+}(y)|^{2}}{|x-y|^{4}}dxdy\\
&-\alpha_2\int_{\R^N}\int_{\R^N}
\frac{|\psi_{n}^{+}(x)|^{2}|\psi_{n}^{+}(y)|^{2}}{|x-y|^{4}}dxdy
-2\beta\int_{\R^N}\int_{\R^N}
\frac{|\varphi_{n}^{+}(x)|^{2}|\psi_{n}^{+}(y)|^{2}}{|x-y|^{4}}dxdy.
\endaligned
\end{equation}
If $(\varphi_{n}^+,\psi_{n}^+)\neq(0,0)$, we define $t_{n} > 0$ by
$$
t_{n}^{2}=\frac{\displaystyle\|(\varphi_{n},\psi_{n})\|^{2}}
{\displaystyle\int_{\R^N}\int_{\R^N}
\frac{\alpha_1|\varphi_{n}^{+}(x)|^{2}|\varphi_{n}^{+}(y)|^{2}
+\alpha_2|\psi_{n}^{+}(x)|^{2}|\psi_{n}^{+}(y)|^{2}
+2\beta|\varphi_{n}^{+}(x)|^{2}|\psi_{n}^{+}(y)|^{2}}{|x-y|^{4}}dxdy}.
$$
Then $(t_{n}\varphi_{n},t_{n}\psi_{n})\in \mathcal{N}_{\infty}$ and
$$
c_{\infty}\leq J_{\infty}(t_{n}\varphi_{n},t\psi_{n})
=\frac{1}{4}\frac{\displaystyle\|(\varphi_{n},\psi_{n})\|^{4}}
{\displaystyle\int_{\R^N}\int_{\R^N}
\frac{\alpha_1|\varphi_{n}^{+}(x)|^{2}|\varphi_{n}^{+}(y)|^{2}
+\alpha_2|\psi_{n}^{+}(x)|^{2}|\psi_{n}^{+}(y)|^{2}
+2\beta|\varphi_{n}^{+}(x)|^{2}|\psi_{n}^{+}(y)|^{2}}{|x-y|^{4}}dxdy},
$$
which indicates that
$$
\int_{\R^N}\int_{\R^N}
\frac{\alpha_1|\varphi_{n}^{+}(x)|^{2}|\varphi_{n}^{+}(y)|^{2}
+\alpha_2|\psi_{n}^{+}(x)|^{2}|\psi_{n}^{+}(y)|^{2}
+2\beta|\varphi_{n}^{+}(x)|^{2}|\psi_{n}^{+}(y)|^{2}}{|x-y|^{4}}dxdy
\leq\frac{1}{4c_{\infty}}\|(\varphi_{n},\psi_{n})\|^{4}.
$$
Note that the above inequality also holds if $(\varphi_{n}^+,\psi_{n}^+)=(0,0)$.
Combining this with \eqref{eq4.17} and \eqref{eq4.11} yields
$$
\aligned
o_{n}(1)
&\geq\Big(1-\frac{1}{4c_{\infty}}\int_{\R^N}(|\nabla \varphi_{n}|^{2}+|\nabla \psi_{n}|^{2})dx\Big)
\int_{\R^N}(|\nabla \varphi_{n}|^{2}+|\nabla \psi_{n}|^{2})dx\\
&\geq\Big(1-\frac{1}{4c_{\infty}}\int_{B_{\rho}(0)}
(|\nabla \widetilde{u}_{n}|^{2}+|\nabla \widetilde{v}_{n}|^{2})dx\Big)
\int_{\R^N}(|\nabla \varphi_{n}|^{2}+|\nabla \psi_{n}|^{2})dx+o_{n}(1).
\endaligned$$
Therefore, we have
$$
\lim_{n\rightarrow\infty}\int_{B_{\rho}(0)}
(|\nabla (\widetilde{u}_{n}-u)|^{2}+|\nabla (\widetilde{v}_{n}-v)|^{2})dx
=\lim_{n\rightarrow\infty}\int_{\R^N}(|\nabla \varphi_{n}|^{2}+|\nabla \psi_{n}|^{2})dx=0,
$$
since the definition of $L$ implies
$$
\int_{B_{\rho}(0)}(|\nabla \widetilde{u}_{n}|^{2}+|\nabla \widetilde{v}_{n}|^{2})dx
\leq L\int_{B_{1}(0)}(|\nabla \widetilde{u}_{n}|^{2}+|\nabla \widetilde{v}_{n}|^{2})dx
=2c_{\infty}.
$$
Then we see from \eqref{eq4.10} that $(u, v)\neq (0, 0)$ and conclude the
proof of the claim.

Set $(u_n^1, v_n^1):=(\overline{u}_{n}, \overline{v}_{n})$, $(u^1, v^1):=(u,v)$,
$r_n^1:=r_n$ and $z_n^1:=z_n$. Doing iteration, we obtain sequences
$\{r_{n}^{j}\}$ and $\{z_{n}^{j}\}$ such that
$(u_{n}^{j},v_{n}^{j}):=(u_{n}^{j-1}-u^{j-1},v_{n}^{j-1}-v^{j-1})_{r_{n}^{j},z_{n}^{j}}
\rightharpoonup (u^{j},v^{j})$ in $H$, where $(u^{j},v^{j})$ are nonzero
solutions of \eqref{eq2.3}. Moreover, we see from \eqref{eq4.7} and
\eqref{eq4.8} that, by induction,
$$
\|(u_{n}^{j},v_{n}^{j})\|^{2}=
\|(u_{n},v_{n})\|^{2}-\sum_{i=0}^{j-1}\|(u^{i},v^{i})\|^{2}+o_{n}(1)
$$
and
$$
J_{\infty}(u_{n}^{j},v_{n}^{j})
=J(u_{n},v_{n})-J(u^{0},v^{0})-\sum_{i=1}^{j-1}J_{\infty}(u^{i},v^{i})+o_{n}(1).
$$
The iterating process must terminate in finite steps, because, for any
nonzero solution $(u, v)$ of \eqref{eq2.3}, there holds
$J_{\infty}(u, v)\geq c_{\infty}>0$. Moreover, the last Palais-Smale
sequence for $J_{\infty}$ must converge to $(0, 0)$ strongly in $H$.
This finishes the proof.
\end{proof}

\begin{cor}\label{cor4.3}
Let $\{(u_{n},v_{n})\}\subset \mathcal{N}$ be a $(PS)_{d}$ sequence
for the constrained functional $J|_\cn$ at the level
$d\in (c_{\infty}, \min\{S_{H,L}^{2}/4\alpha_1, S_{H,L}^{2}/4\alpha_2, 2c_{\infty}\})$,
then $\{(u_{n},v_n)\}$ is relatively compact in $H$.
\end{cor}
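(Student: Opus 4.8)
The plan is to first convert the constrained Palais--Smale sequence into an unconstrained one, then apply the global compactness Lemma~\ref{lem4.2}, and finally use the energy window for $d$ together with the classification of the limit system to rule out every nontrivial concentration ``bubble''. To begin, I would check that a $(PS)_d$ sequence $\{(u_n,v_n)\}\subset\cn$ for $J|_{\cn}$ is also a $(PS)_d$ sequence for $J$ on $H$. On $\cn$ one has $J(u,v)=\frac14\big(\|(u,v)\|^2+\int_{\R^N}(V_1u^2+V_2v^2)\,dx\big)\ge\frac14\|(u,v)\|^2$, so $J(u_n,v_n)\to d$ forces $\{(u_n,v_n)\}$ to be bounded in $H$. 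Writing $\Phi(u,v):=\langle J'(u,v),(u,v)\rangle$, on $\cn$ we have $\Phi=0$, whence $\langle\Phi'(u,v),(u,v)\rangle=-2\big(\|(u,v)\|^2+\int_{\R^N}(V_1u^2+V_2v^2)\,dx\big)\le-2\|(u,v)\|^2$; moreover the Hardy--Littlewood--Sobolev and Sobolev inequalities give $\|(u,v)\|^2\le C\|(u,v)\|^4$ on $\cn$, so $\|(u,v)\|$ is bounded away from $0$ there and $\langle\Phi'(u_n,v_n),(u_n,v_n)\rangle\le -c_0<0$. The Lagrange multiplier rule produces $\mu_n\in\R$ with $J'(u_n,v_n)-\mu_n\Phi'(u_n,v_n)\to0$ in $H^{*}$; testing with $(u_n,v_n)$ and using $\langle J'(u_n,v_n),(u_n,v_n)\rangle=0$ yields $\mu_n\to0$, and since $\{\Phi'(u_n,v_n)\}$ is bounded in $H^{*}$ we conclude $J'(u_n,v_n)\to0$.

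Next I would apply Lemma~\ref{lem4.2} to $\{(u_n,v_n)\}$, obtaining $k\in\N$, a solution $(u^0,v^0)$ of \eqref{eq2.1} with $(u_n,v_n)\rightharpoonup(u^0,v^0)$, nonzero solutions $(u^1,v^1),\dots,(u^k,v^k)$ of \eqref{eq2.3}, the splitting $d=J(u^0,v^0)+\sum_{j=1}^k J_\infty(u^j,v^j)$ and $\lim_n\|(u_n,v_n)\|^2=\sum_{j=0}^k\|(u^j,v^j)\|^2$; the desired relative compactness amounts precisely to $k=0$. To prove this I would collect energy lower bounds for the pieces. Since $(u^0,v^0)$ is a critical point of $J$, $J(u^0,v^0)=\frac14\big(\|(u^0,v^0)\|^2+\int_{\R^N}(V_1(u^0)^2+V_2(v^0)^2)\,dx\big)\ge0$, with equality only when $(u^0,v^0)=(0,0)$; and if $(u^0,v^0)\ne(0,0)$ then $(u^0,v^0)\in\cn$, so $J(u^0,v^0)\ge c\ge c_\infty$, the bound $c\ge c_\infty$ being obtained as in the proof of Lemma~\ref{lem2.1} applied to the system (cf.\ Lemma~\ref{lem2.4}). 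Each bubble $(u^j,v^j)$ solves \eqref{eq2.3}, whose right-hand sides are nonnegative multiples of $(u^j)^+$ and $(v^j)^+$; testing with the negative parts shows $u^j,v^j\ge0$, so by the strong maximum principle each of them is either $\equiv0$ or everywhere positive, and since $(u^j,v^j)\ne(0,0)$ either both components are positive or the solution is semitrivial. If both components are positive, $(u^j,v^j)$ is a positive classical solution of \eqref{eq2.3}, and Corollary~\ref{cor3.2} identifies it with a ground state, so $J_\infty(u^j,v^j)=c_\infty$. If the solution is semitrivial, say $(u^j,0)$ with $u^j>0$, then $u^j\in\mathcal{M}_{1\infty}$ and $J_\infty(u^j,0)=I_{1\infty}(u^j)\ge c_{1\infty}=\frac1{4\alpha_1}S_{H,L}^2$, and symmetrically $J_\infty(0,v^j)\ge c_{2\infty}=\frac1{4\alpha_2}S_{H,L}^2$. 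Since $\beta>\max\{\alpha_1,\alpha_2\}$ gives $c_\infty=\frac14(k_0+l_0)S_{H,L}^2<\frac1{4\alpha_i}S_{H,L}^2$, in all cases $J_\infty(u^j,v^j)\ge c_\infty>0$.

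Finally, suppose $k\ge1$ for contradiction. If $k\ge2$ then $d\ge J(u^0,v^0)+2c_\infty\ge2c_\infty$, contradicting $d<2c_\infty$; hence $k=1$ and $d=J(u^0,v^0)+J_\infty(u^1,v^1)$. If $(u^0,v^0)\ne(0,0)$ then $d\ge c_\infty+c_\infty=2c_\infty$, again impossible, so $(u^0,v^0)=(0,0)$ and $d=J_\infty(u^1,v^1)$. If the surviving bubble $(u^1,v^1)$ has both components positive then $d=c_\infty$, contradicting $d>c_\infty$; if it is semitrivial then $d\ge c_{i\infty}=\frac1{4\alpha_i}S_{H,L}^2$ for some $i\in\{1,2\}$, contradicting $d<\min\{S_{H,L}^2/4\alpha_1,S_{H,L}^2/4\alpha_2\}$. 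Therefore $k=0$, so $(u_n,v_n)\rightharpoonup(u^0,v^0)$ with $\|(u_n,v_n)\|\to\|(u^0,v^0)\|$, which in the Hilbert space $H$ forces $(u_n,v_n)\to(u^0,v^0)$ strongly; hence $\{(u_n,v_n)\}$ is relatively compact in $H$. I expect the delicate point to be the treatment of the single surviving bubble when $(u^0,v^0)=(0,0)$: excluding a two-component bubble relies crucially on the sharp classification of Theorem~\ref{thm3.1+} (via Corollary~\ref{cor3.2}) to pin its energy at exactly $c_\infty$ --- without it such a bubble could a priori carry energy strictly between $c_\infty$ and $2c_\infty$ and would survive --- whereas the exclusion of semitrivial bubbles is exactly what the upper bounds $d<S_{H,L}^2/4\alpha_i$ are designed to achieve.
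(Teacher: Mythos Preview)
Your proof is correct and follows essentially the same approach as the paper: convert to an unconstrained Palais--Smale sequence, apply the global compactness Lemma~\ref{lem4.2}, and use the energy window together with the classification results (Corollary~\ref{cor3.2} for two-component bubbles, the scalar Choquard structure for semitrivial ones) to rule out every bubble. The paper's proof is terser---it asserts the passage to an unconstrained sequence as ``easy to see'' where you spell out the Lagrange multiplier argument, and it organizes the case analysis by first showing $(u^0,v^0)\ne(0,0)$ and then deducing $k=0$, whereas you split on the value of $k$ first---but the logic is the same. One minor difference: for a semitrivial bubble the paper invokes the scalar uniqueness result to get $J_\infty(u^1,0)=S_{H,L}^2/4\alpha_1$ exactly, while you only use the Nehari lower bound $J_\infty(u^j,0)\ge c_{1\infty}$; your inequality is already enough for the contradiction with $d<\min\{S_{H,L}^2/4\alpha_1,S_{H,L}^2/4\alpha_2\}$.
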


\begin{proof}[\bf Proof.]
It is easy to see that $\{(u_{n},v_{n})\}\subset \mathcal{N}$ is a $(PS)_{d}$
sequence for the functional $J$. According to Lemma \ref{lem4.2}, there
exist a number $k\in \N$, a solution $(u^{0},v^{0})$ of \eqref{eq2.1}
and nonzero solutions $(u^1, v^1),\cdots, (u^k, v^k)$ of \eqref{eq2.3}
such that, up to a subsequence,
$$
\lim_{n\to\infty}\|(u_{n},v_{n})\|^{2}=\sum_{j=0}^{k}\|(u^{j},v^{j})\|^{2}
$$
and
$$
\lim_{n\to\infty}J(u_{n},v_{n})=J(u^{0},v^{0})+\sum_{j=1}^{k}J_{\infty}(u^{j},v^{j}).
$$
We first claim that $(u^0, v^0) \neq (0, 0)$. If not, we see from $d<2c_\infty$
that $k=1$. By Corollary \ref{cor3.2} and the uniqueness of positive
solutions for the Choquard equation $-\Delta u=\alpha_{i}(|x|^{-4}\ast u^{2})u$
in $\R^N$, $(u^1, v^1)$ must be, up to translation and dilation, one of the
three solutions $(\sqrt{k_{0}}U_{1,0},\sqrt{l_{0}}U_{1,0})$,
$(\frac{1}{\sqrt{\alpha_1}}U_{1,0},0)$, and $(0,\frac{1}{\sqrt{\alpha_2}}U_{1,0})$.
Then either $d = c_{\infty}$ or $d =S_{H,L}^{2}/4\alpha_1$ or
$d =S_{H,L}^{2}/4\alpha_2$, which contradicts the assumption. since
$(u^0, v^0) \neq (0, 0)$, using $d<2c_\infty$ again and Lemma \ref{lem2.4}
leads to $k=0$. Therefore, we conclude that $\{(u_{n},v_{n})\}$ is relatively
compact in $H$.
\end{proof}

\section{Existence of a positive solution}
For $(u,v)\in H$, set
$$
\|(u,v)\|_{NL}:=\Big(\int_{\R^N}\int_{\R^N}
\frac{\alpha_1|u^{+}(x)|^{2}|u^{+}(y)|^{2}+\alpha_2|v^{+}(x)|^{2}|v^{+}(y)|^{2}
+2\beta|u^{+}(x)|^{2}|v^{+}(y)|^{2}}{|x-y|^{4}}dxdy\Big)^{\frac14}.
$$
Following the idea in \cite{CeMo}, we introduce a barycenter map $\beta:H\rightarrow\R^{N}$ defined
as
$$
\beta(u,v)=\frac{1}{\|(u,v)\|_{NL}^4}\int_{\R^N}\frac{x}{1+|x|}\int_{\R^N}
\frac{\alpha_1|u^{+}(x)|^{2}|u^{+}(y)|^{2}+\alpha_2|v^{+}(x)|^{2}|v^{+}(y)|^{2}
+2\beta|u^{+}(x)|^{2}|v^{+}(y)|^{2}}{|x-y|^{4}}dy\,dx.
$$
We also define a functional
\begin{multline*}
\gamma(u,v)=\\
\frac{1}{\|(u,v)\|_{NL}^4}\int_{\R^N}\Big|\frac{x}{1+|x|}-\beta(u,v)\Big|
\int_{\R^N}\frac{\alpha_1|u^{+}(x)|^{2}|u^{+}(y)|^{2}+\alpha_2|v^{+}(x)|^{2}|v^{+}(y)|^{2}
+2\beta|u^{+}(x)|^{2}|v^{+}(y)|^{2}}{|x-y|^{4}}dy\,dx
\end{multline*}
to estimate the concentration of $(u,v)$ around its barycenter. Denote
$$
\mathcal{M}:=\Big\{(u,v)\in\mathcal{N}:\beta(u,v)=0,\gamma(u,v)=\frac{1}{2}\Big\}
$$
and consider the infimum
$$
c^{\star}=\inf_{(u,v)\in\mathcal{M}}J(u,v).
$$

\begin{lem}\label{lem5.1}
If $\beta>\max\{\alpha_1,\alpha_2\}$ and $V_1,\,V_2\in L^{\frac{N}{2}}(\R^{N})$
are nonnegative functions such that
$$
|V_1|_{\frac{N}{2}}+|V_2|_{\frac{N}{2}}>0,
$$
then $c^{\star} >c_{\infty}$.
\end{lem}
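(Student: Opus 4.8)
The plan is to argue by contradiction. Since $\mathcal{M}\subset\mathcal{N}$ and, by Lemma~\ref{lem2.4}, $\inf_{\mathcal{N}}J=c=c_{\infty}$, one always has $c^{\star}\ge c_{\infty}$; so I would assume $c^{\star}=c_{\infty}$ and fix a minimizing sequence $(u_{n},v_{n})\in\mathcal{M}$ with $J(u_{n},v_{n})\to c_{\infty}$. This sequence also minimizes $J$ on $\mathcal{N}$, so by Ekeland's variational principle on the Nehari manifold I may replace it by an $o_{n}(1)$-close sequence (still denoted $(u_{n},v_{n})$) that is a $(PS)_{c_{\infty}}$ sequence for $J$ on $H$; since $\|(u,v)\|_{NL}^{4}\ge\|(u,v)\|^{2}$ stays bounded away from zero on $\mathcal{N}$, the functionals $\beta,\gamma$ are continuous there, so the replacement preserves $\beta(u_{n},v_{n})\to0$ and $\gamma(u_{n},v_{n})\to\tfrac12$.

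Next I would feed this $(PS)_{c_{\infty}}$ sequence into the global compactness Lemma~\ref{lem4.2}, obtaining $k\in\N$, a solution $(u^{0},v^{0})$ of \eqref{eq2.1}, nonzero solutions $(u^{1},v^{1}),\dots,(u^{k},v^{k})$ of \eqref{eq2.3}, and rescalings $(r_{n}^{j},z_{n}^{j})$ with
$$
c_{\infty}=\lim_{n\to\infty}J(u_{n},v_{n})=J(u^{0},v^{0})+\sum_{j=1}^{k}J_{\infty}(u^{j},v^{j}).
$$
Because $J(u^{0},v^{0})\ge0$ (as $(u^{0},v^{0})$ solves \eqref{eq2.1} and $V_{1},V_{2}\ge0$) and $J_{\infty}(u^{j},v^{j})\ge c_{\infty}$ for $j\ge1$ (a nonzero solution of \eqref{eq2.3} lies on $\mathcal{N}_{\infty}$), it follows that $k\le1$. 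If $k=0$, then $(u_{n},v_{n})\to(u^{0},v^{0})$ strongly with $J(u^{0},v^{0})=c_{\infty}>0$, so $(u^{0},v^{0})$ would be a nonzero solution of \eqref{eq2.1} attaining $c$, which Lemma~\ref{lem2.4} forbids. Hence $k=1$, $J(u^{0},v^{0})=0$ (so $(u^{0},v^{0})=(0,0)$), and $J_{\infty}(u^{1},v^{1})=c_{\infty}$; thus $(u^{1},v^{1})$ is a least energy solution of \eqref{eq2.3}, so by Lemma~\ref{lem2.3} it equals $(\sqrt{k_{0}}U_{\delta,z},\sqrt{l_{0}}U_{\delta,z})$, and since the decomposition has a single bubble, $(u_{n},v_{n})_{r_{n}^{1},z_{n}^{1}}\to(u^{1},v^{1})$ strongly in $H$. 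Absorbing $\delta,z$ into the rescaling I would write $w_{n}:=(u_{n},v_{n})_{r_{n},z_{n}}\to W:=(\sqrt{k_{0}}U_{1,0},\sqrt{l_{0}}U_{1,0})$ strongly in $H$. Since $(u_{n},v_{n})\rightharpoonup(0,0)$, the parameters cannot have $r_{n}$ bounded away from $0$ and $\infty$ together with $z_{n}$ bounded (otherwise $(u_{n},v_{n})$ would converge strongly to a nonzero rescaling of $W$); so, up to a subsequence, either (a) $r_{n}\to0$ and $z_{n}\to z_{\ast}\in\R^{N}$, or (b) $|r_{n}\xi+z_{n}|\to\infty$ for a.e.\ $\xi\in\R^{N}$.

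Then I would exploit the barycenter and concentration constraints by a change of variables. The nonlocal density $D_{n}$ of $(u_{n},v_{n})$ is scale invariant: with $x=r_{n}\xi+z_{n}$ one has $D_{n}(x)\,dx=\widetilde D_{n}(\xi)\,d\xi$, where $\widetilde D_{n}$ is the nonlocal density of $w_{n}$; hence $\|(u_{n},v_{n})\|_{NL}^{4}=\|w_{n}\|_{NL}^{4}$, and $\widetilde D_{n}\to\widetilde D_{W}$ in $L^{1}(\R^{N})$ by the Hardy--Littlewood--Sobolev inequality together with $w_{n}\to W$ in $L^{2^{\ast}}\times L^{2^{\ast}}$. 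Consequently
$$
\beta(u_{n},v_{n})=\frac{1}{\|w_{n}\|_{NL}^{4}}\int_{\R^{N}}\frac{r_{n}\xi+z_{n}}{1+|r_{n}\xi+z_{n}|}\,\widetilde D_{n}(\xi)\,d\xi
$$
and
$$
\gamma(u_{n},v_{n})=\frac{1}{\|w_{n}\|_{NL}^{4}}\int_{\R^{N}}\Big|\frac{r_{n}\xi+z_{n}}{1+|r_{n}\xi+z_{n}|}-\beta(u_{n},v_{n})\Big|\,\widetilde D_{n}(\xi)\,d\xi.
$$
In case (b), since $\big|\tfrac{r_{n}\xi+z_{n}}{1+|r_{n}\xi+z_{n}|}\big|\to1$ a.e.\ and $\beta(u_{n},v_{n})\to0$, the integrand in $\gamma$ tends to $1$ a.e., so the generalized dominated convergence theorem yields $\gamma(u_{n},v_{n})\to1$. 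In case (a), $\beta(u_{n},v_{n})\to\tfrac{z_{\ast}}{1+|z_{\ast}|}$, which with $\beta(u_{n},v_{n})\to0$ forces $z_{\ast}=0$, and then $\tfrac{r_{n}\xi+z_{n}}{1+|r_{n}\xi+z_{n}|}\to0$ a.e., so $\gamma(u_{n},v_{n})\to0$. Either way $\gamma(u_{n},v_{n})\not\to\tfrac12$, contradicting $\gamma(u_{n},v_{n})\to\tfrac12$, and therefore $c^{\star}>c_{\infty}$.

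The hard part will be the middle step: proving that the necessarily non-compact minimizing sequence is, after a single rescaling, precisely the ground-state bubble $W$. This is exactly where the classification of positive solutions of the limit system (Theorem~\ref{thm3.1+}, via Lemma~\ref{lem2.3} and Corollary~\ref{cor3.2}) and the non-attainment of $c$ (Lemma~\ref{lem2.4}) are used, to exclude both a second bubble and a nontrivial weak limit and to pin down the bubble up to translation and dilation; once the profile is identified, the barycenter/concentration estimate is a routine change of variables combined with dominated convergence.
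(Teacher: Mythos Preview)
Your proposal is correct and follows essentially the same route as the paper: assume $c^\star=c_\infty$, produce via Ekeland a $(PS)_{c_\infty}$ sequence on $\mathcal N$ with $\beta\to0$, $\gamma\to\tfrac12$, feed it into the global compactness Lemma~\ref{lem4.2}, identify it (using Lemma~\ref{lem2.4} and Lemma~\ref{lem2.3}) as a single ground-state bubble $(\sqrt{k_0}U_{\delta_n,z_n},\sqrt{l_0}U_{\delta_n,z_n})$ plus $o(1)$ in $H$, and then rule out all asymptotic regimes of $(\delta_n,z_n)$ by computing the limits of $\beta$ and $\gamma$.

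The only organizational difference is your packaging of the endgame. The paper runs four separate cases on $(\delta_n,z_n)$ (namely $\delta_n\to\infty$; $\delta_n\to\delta>0$ with $z_n\to z$; $\delta_n\to\delta>0$ with $|z_n|\to\infty$; $\delta_n\to0$), whereas you compress these into the dichotomy ``$r_n\to0$ with $z_n\to z_\ast$'' versus ``$|r_n\xi+z_n|\to\infty$ a.e.''\ via the scale-invariant change of variables in the nonlocal density. Your exclusion of the compact regime uses $(u_n,v_n)\rightharpoonup(0,0)$, while the paper instead evaluates $J$ at the strong limit and obtains a value strictly above $c_\infty$ from the potential term; both are valid and lead to the same contradiction with $\gamma\to\tfrac12$.
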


\begin{proof}[\bf Proof.]
We first see from $\cm\subset\cn$ and Lemma \ref{lem2.4} that
$c^{\star} \geq c=c_{\infty}$. Assume to the contrary that $c^{\star}=c_{\infty}$.
Then, by Ekeland's variational principle, there exists a sequence
$\{(u_{n},v_{n})\}\subset \cn$ such that, as $n\to\infty$,
\begin{equation}\label{eq5.1}
\beta(u_{n},v_{n})\to 0,\ \ \gamma(u_{n},v_{n})\to\frac{1}{2}
\end{equation}
and
$$
J(u_{n},v_{n})\to c_{\infty},\ \ (J|_\cn)'(u_n, v_n)\to 0.
$$
By Lemmas \ref{lem2.4} and \ref{lem4.2}, there exist
$\delta_{n}> 0$, $z_{n}\in\R^N$ and $(\varphi_{n},\psi_{n})\in H$
such that
$$
(u_{n},v_{n})=(\sqrt{k_{0}}U_{\delta_{n},z_{n}},\sqrt{l_{0}}U_{\delta_{n},z_{n}})
+(\varphi_{n},\psi_{n}),
$$
where $(\varphi_{n},\psi_{n})\rightarrow(0, 0)$ in $H$. Then we have
\begin{equation}\label{eq5.2}
\aligned
\frac{1}{2}=
&\lim_{n\rightarrow\infty}\gamma(u_{n},v_{n})\\
=&\lim_{n\rightarrow\infty}\frac{1}{\|(u_n,v_n)\|_{NL}^4}\int_{\R^N}
\frac{|x|}{1+|x|}\int_{\R^N}\frac{\alpha_1|u_{n}^{+}(x)|^{2}|u_{n}^{+}(y)|^{2}
+\alpha_2|v_{n}^{+}(x)|^{2}|v_{n}^{+}(y)|^{2}
+2\beta|u_{n}^{+}(x)|^{2}|v_{n}^{+}(y)|^{2}}{|x-y|^{4}}dy\,dx\\
=&\lim_{n\rightarrow\infty}\frac{1}{S_{H,L}^{2}}\int_{\R^N}
\frac{|x|}{1+|x|}
\int_{\R^N}\frac{U^2_{\delta_{n},z_{n}}(x) U^2_{\delta_{n},z_{n}}(y)}{|x-y|^{4}}dy\,dx
\endaligned
\end{equation}
and
\begin{equation}\label{eq5.3}
0=\lim_{n\rightarrow\infty}\beta(u_{n},v_{n})
=\lim_{n\rightarrow\infty}\frac{1}{S_{H,L}^{2}}\int_{\R^N}
\frac{x}{1+|x|}
\int_{\R^N}\frac{U^2_{\delta_{n},z_{n}}(x) U^2_{\delta_{n},z_{n}}(y)}{|x-y|^{4}}dy\,dx.
\end{equation}
We divide into the following four cases. In each case, we will come to a
contradiction and finish the proof.

{\bf Case 1.} Up to a subsequence, there holds
$\lim_{n\rightarrow\infty}\delta_{n}=+\infty$.

In this case, we have
$$
\lim_{n\rightarrow\infty}\int_{B_{r}(0)} \int_{\R^N}
\frac{U^2_{\delta_{n},z_{n}}(x) U^2_{\delta_{n},z_{n}}(y)}{|x-y|^{4}}dy\,dx=0
$$
for any $r > 0$. Combining this with \eqref{eq5.2} leads to
$$
\frac{1}{2}
=\lim_{n\rightarrow\infty}\frac{1}{S_{H,L}^{2}}\int_{\R^N\backslash B_{r}(0)}
\frac{|x|}{1+|x|}
\int_{\R^N}\frac{U^2_{\delta_{n},z_{n}}(x) U^2_{\delta_{n},z_{n}}(y)}{|x-y|^{4}}dy\,dx
\geq\frac{r}{1+r},
$$
which is impossible when $r>1$.

{\bf Case 2.} Up to a subsequence, there hold
$\lim_{n\rightarrow\infty}\delta_{n}=\delta>0$ and $\lim_{n\to\infty}z_n=z$.

In this case, one can prove that
$$
(\sqrt{k_{0}}U_{\delta_{n},z_{n}},\sqrt{l_{0}}U_{\delta_{n},z_{n}})
\to (\sqrt{k_{0}}U_{\delta,z},\sqrt{l_{0}}U_{\delta,z})\ \ \text{in}\ H
$$
and then $(u_{n},v_{n})\to (\sqrt{k_{0}}U_{\delta,z},\sqrt{l_{0}}U_{\delta,z})$
in $H$. We come to a contradiction as shown by
\begin{align*}
c_\infty
&=\lim_{n\to\infty}J(u_n, v_n)\\
&=\frac14\lim_{n\to\infty}\int_{\R^N}(|\nabla u_n|^2+|\nabla v_n|^2+V_1(x)u_n^2+V_2(x)v_n^2)dx\\
&=\frac{k_0+l_0}{4}\int_{\R^N}|\nabla U_{\delta,z}|^2dx
+\frac14\int_{\R^4}(k_0V_1(x)U_{\delta,z}^2+l_0V_2(x)U_{\delta,z}^2)\\
&>\frac{k_0+l_0}{4}S_{H,L}^2\\
&=c_\infty.
\end{align*}

{\bf Case 3.} Up to a subsequence, there holds
$\lim_{n\rightarrow\infty}\delta_{n}=\delta>0$ and $\lim_{n\to\infty}|z_n|=+\infty$.

In this case, we have
\begin{align*}
&\quad\,\frac{1}{S_{H,L}^{2}}\int_{\R^N}
\frac{|x|}{1+|x|}
\int_{\R^N}\frac{U^2_{\delta_{n},z_{n}}(x) U^2_{\delta_{n},z_{n}}(y)}{|x-y|^{4}}dy\,dx\\
&=1-\frac{1}{S_{H,L}^{2}}\int_{\R^N}
\frac{1}{1+|x|}
\int_{\R^N}\frac{U^2_{\delta_{n},z_{n}}(x) U^2_{\delta_{n},z_{n}}(y)}{|x-y|^{4}}dy\,dx\\
&=1-o_n(1),
\end{align*}
which contradicts with \eqref{eq5.2}.

{\bf Case 4.} Up to a subsequence, there holds
$\lim_{n\rightarrow\infty}\delta_{n}=0$.

In this case, we have
$$
\lim_{n\rightarrow\infty}\int_{\R^N\backslash B_{r}(0)}
\int_{\R^N}\frac{U^2_{\delta_{n},0}(x) U^2_{\delta_{n},0}(y)}{|x-y|^{4}}dy\,dx
=0
$$
for any $r > 0$. Combining this with \eqref{eq5.1}, \eqref{eq5.3} and the
inequality $|\frac{z_{n}}{1+|z_{n}|}-\frac{x}{1+|x|}|\leq r$ for $x\in B_{r}(z_{n})$
yields
$$
\aligned
\frac{|z_{n}|}{1+|z_{n}|}
&=\Big|\frac{z_{n}}{1+|z_{n}|}-\beta(u_{n},v_{n})\Big|+o_{n}(1)\\
&\leq\frac{1}{S_{H,L}^{2}}\int_{\R^N}
\Big|\frac{z_{n}}{1+|z_{n}|}-\frac{x}{1+|x|}\Big|\int_{\R^N}
\frac{U^2_{\delta_{n},z_{n}}(x) U^2_{\delta_{n},z_{n}}(y)}{|x-y|^{4}}dy\,dx+o_{n}(1)\\
&=\frac{1}{S_{H,L}^{2}}\int_{B_{r}(z_n)}
\Big|\frac{z_{n}}{1+|z_{n}|}-\frac{x}{1+|x|}\Big|\int_{\R^N}
\frac{U^2_{\delta_{n},z_{n}}(x) U^2_{\delta_{n},z_{n}}(y)}{|x-y|^{4}}dy\,dx+o_{n}(1)\\
&\leq r+o_{n}(1),
\endaligned
$$
which implies that $|z_n|\rightarrow0$ as $n\rightarrow\infty$, since $r>0$
is arbitrary. Then we have
$$
\aligned
&\quad\,\frac{1}{S_{H,L}^{2}}\int_{\R^N}\frac{|x|}{1+|x|}
\int_{\R^N}\frac{U^2_{\delta_{n},z_{n}}(x) U^2_{\delta_{n},z_{n}}(y)}{|x-y|^{4}}dy\,dx\\
&=\frac{1}{S_{H,L}^{2}}\int_{B_{r}(z_n)}\frac{|x|}{1+|x|}
\int_{\R^N}\frac{U^2_{\delta_{n},z_n}(x) U^2_{\delta_{n},z_n}(y)}{|x-y|^{4}}dy\,dx+o_n(1)\\
&\leq r+o_n(1)
\endaligned
$$
for any $r>0$. This contradicts with \eqref{eq5.2} again when $r<\frac12$.
\end{proof}

Let
$\eta\in \mathcal{C}_{0}^{\infty}(B_{1}(0))$ be a radially decreasing
function such that  $\eta\equiv1$ on $B_{\rho}(0)$ for some
$0<\rho<1$ and define $u_{\varepsilon}(x) =  \eta(x)U_{\varepsilon,0}(x)$
for $\varepsilon> 0$. By \cite[Section 3]{GY}, we have
\begin{equation}\label{eq5.6}
\int_{\R^N}|\nabla u_{\varepsilon}|^{2}dx
=S_{H,L}^2+O(\varepsilon^{N-2}),
\end{equation}
\begin{equation}\label{eq5.7}
\int_{\R^N}\int_{\R^N}\frac{u^2_{\varepsilon}(x) u^2_{\varepsilon}(y)}
{|x-y|^{4}}dxdy
\geq S_{H,L}^2-O(\varepsilon^{N-2})
\end{equation}
and
\begin{equation}\label{eq5.8}
\int_{\R^N}\int_{\R^N}\frac{u^2_{\varepsilon}(x) u^2_{\varepsilon}(y)}
{|x-y|^{4}}dxdy
\leq S_{H,L}^2+O(\varepsilon^{2N-4}).
\end{equation}
Set
$$
t_{\varepsilon}=\bigg(\frac{\displaystyle\int_{\R^N}|\nabla u_{\varepsilon}|^{2}dx}
{\displaystyle\int_{\R^N}\int_{\R^N}\frac{u^2_{\varepsilon}(x) u^2_{\varepsilon}(y)}
{|x-y|^{4}}dxdy}\bigg)^{\frac12}.
$$
Then
$$
\aligned
\|t_{\varepsilon}u_{\varepsilon}\|^{2}
&=\int_{\R^N}\int_{\R^N}
\frac{|t_{\varepsilon}u_{\varepsilon}(x)|^{2}|t_{\varepsilon}u_{\varepsilon}(y)|^{2}}
{|x-y|^{4}}dxdy\\
&=\frac{\|u_{\varepsilon}\|^{4}}{\displaystyle\int_{\R^N}\int_{\R^N}
\frac{u^2_{\varepsilon}(x) u^2_{\varepsilon}(y)}{|x-y|^{4}}dxdy}\\
&\geq\frac{\big[S_{H,L}^2+O(\varepsilon^{N-2})\big]^{2}}
{S_{H,L}^2+O(\varepsilon^{2N-4})}\\
&=S_{H,L}^{2}+O(\varepsilon^{N-2})\\
&>S_{H,L}^{2}
\endaligned$$
for $\varepsilon> 0$ sufficiently small. By \eqref{eq5.6}$-$\eqref{eq5.8}, it is
easy to see that
$$
\lim_{\varepsilon\rightarrow0}\|t_{\varepsilon}u_{\varepsilon}\|^{2}
=\lim_{\varepsilon\rightarrow0}\int_{\R^N}\int_{\R^N}
\frac{|t_{\varepsilon}u_{\varepsilon}(x)|^{2}|t_{\varepsilon}u_{\varepsilon}(y)|^{2}}
{|x-y|^{4}}dxdy
=S_{H,L}^{2}.
$$
Take $w = t_{\varepsilon}u_{\varepsilon}$ with $\varepsilon>0$
small enough, we can see that
the nonnegative radial function $w\in \mathcal{C}_{0}^{\infty}(\R^N)$
satisfies the following properties:
$\mbox{supp}\,w \subset B_1(0)$, $w$ is non-increasing with respect
to $r =|x|$,
$$
\|w\|^{2}=\int_{\R^N}\int_{\R^N}\frac{w^2(x) w^2(y)}{|x-y|^{4}}dxdy>S_{H,L}^{2},
$$
\begin{equation}\label{eq5.4}
\frac{k_{0}+l_{0}}{4}\|w\|^{2}<c^{\star}
\end{equation}
and
\begin{equation}\label{eq5.5}
\aligned
\frac{k_{0}+l_{0}}{4}\|w\|^{2}S_{H,L}^{-2}\Big(S_{H,L}
+\frac{\beta-\alpha_2}{2\beta-\alpha_1-\alpha_2}C(N, 4)^{-\frac12}|V_1|_{\frac{N}{2}}
+&\frac{\beta-\alpha_1}{2\beta-\alpha_1-\alpha_2}C(N, 4)^{-\frac12}|V_2|_{\frac{N}{2}}\Big)^{2}\\
&\qquad\quad<\min\Big\{\frac{S_{H,L}^{2}}{4\alpha_1},
\frac{S_{H,L}^{2}}{4\alpha_2},2c_{\infty}\Big\}
\endaligned
\end{equation}
which is equivalent to the second inequality in \eqref{eq1.10}.

The proof of the following Lemma is similar to Lemma 3.6 in \cite{CeMo}, Lemma 4.2 in \cite{LL}, we only state the main results here.
\begin{lem}\label{lem5.2}
Denote $w_{\delta,z}=\delta^{-\frac{N-2}{2}}w(\frac{x-z}{\delta})$ for
$\delta> 0$ and $z\in\R^N$. If $a\in L^{\frac{N}{2}}(\R^N)$, then
$$
\lim_{\delta\rightarrow0^{+}}\int_{\R^N}a(x)w_{\delta,z}^{2}dx
=\lim_{\delta\rightarrow+\infty}\int_{\R^N}a(x)w_{\delta,z}^{2}dx
=0
$$
uniformly for $z\in\R^N$ and
$$
\lim_{|z|\rightarrow+\infty}\int_{\R^N}a(x)w_{\delta,z}^{2}dx=0
$$
uniformly for $\delta> 0$.
\end{lem}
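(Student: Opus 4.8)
The plan is to derive all three limits from the scale-invariance of the $L^{2^\ast}$-norm of $w$ (with $2^\ast=\frac{2N}{N-2}$) together with the density of $C_c(\R^N)$ in $L^{N/2}(\R^N)$. First I would record two elementary facts about the rescaled function $w_{\delta,z}$. Since $\frac{N-2}{2}$ is the conformal exponent, the substitution $y=\frac{x-z}{\delta}$ gives $\int_{\R^N}w_{\delta,z}^{2^\ast}\,dx=\int_{\R^N}w^{2^\ast}\,dx$, while $\mathrm{supp}\,w_{\delta,z}\subset\overline{B_\delta(z)}$; hence, by H\"older's inequality with exponents $\frac{N}{2}$ and $\frac{N}{N-2}$,
$$
\Big|\int_{\R^N}a(x)w_{\delta,z}^2\,dx\Big|\le |a|_{\frac{N}{2},\,B_\delta(z)}\,|w|_{2^\ast}^2\qquad\text{for all }\delta>0,\ z\in\R^N.
$$
Second, $|w_{\delta,z}|_\infty=\delta^{-\frac{N-2}{2}}|w|_\infty$, so that for every fixed $R>0$,
$$
\int_{B_R(0)}w_{\delta,z}^2\,dx\le |B_R(0)|\,\delta^{-(N-2)}|w|_\infty^2 .
$$

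For the limit $\delta\to0^{+}$: given $\varepsilon>0$, choose $g\in C_c(\R^N)$ with $|a-g|_{\frac{N}{2}}<\varepsilon$; then $|a|_{\frac{N}{2},B_\delta(z)}\le \varepsilon+|g|_\infty|B_\delta(0)|^{2/N}$ for every $z$, and since $|B_\delta(0)|\to0$ the first displayed bound yields $\limsup_{\delta\to0^{+}}\sup_{z}\big|\int_{\R^N}a\,w_{\delta,z}^2\big|\le\varepsilon|w|_{2^\ast}^2$; as $\varepsilon$ is arbitrary this proves the first limit, uniformly in $z$. For the limit $\delta\to+\infty$: with the same $g$, say $\mathrm{supp}\,g\subset B_R(0)$, split $\int_{\R^N}a\,w_{\delta,z}^2=\int_{\R^N}(a-g)w_{\delta,z}^2+\int_{B_R(0)}g\,w_{\delta,z}^2$; the first term is $\le\varepsilon|w|_{2^\ast}^2$ by the H\"older bound, and the second is $\le|g|_\infty|B_R(0)|\,\delta^{-(N-2)}|w|_\infty^2$, which tends to $0$ as $\delta\to+\infty$, uniformly in $z$. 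Letting $\varepsilon\to0$ gives the second limit.

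The only step requiring care is the limit $|z|\to+\infty$ uniform in $\delta$, because one must simultaneously control the concentrating regime (small $\delta$) and the spreading regime (large $\delta$). I would handle it as follows: fix $\varepsilon>0$; by the estimate just used for $\delta\to+\infty$ there is $\Delta>0$ with $\big|\int_{\R^N}a\,w_{\delta,z}^2\big|<\varepsilon$ whenever $\delta>\Delta$, for every $z$. For the remaining range $0<\delta\le\Delta$, take $g\in C_c(\R^N)$ with $\mathrm{supp}\,g\subset B_R(0)$ and $|a-g|_{\frac{N}{2}}<\varepsilon$; if $|z|>R+\Delta$ then $B_\delta(z)\cap B_R(0)=\emptyset$, so $\int_{\R^N}g\,w_{\delta,z}^2=0$ and the H\"older bound gives $\big|\int_{\R^N}a\,w_{\delta,z}^2\big|\le|a-g|_{\frac{N}{2}}|w|_{2^\ast}^2<\varepsilon|w|_{2^\ast}^2$. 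Hence for all $|z|>R+\Delta$ and all $\delta>0$ one gets $\big|\int_{\R^N}a\,w_{\delta,z}^2\big|\le\varepsilon\max\{1,|w|_{2^\ast}^2\}$, and letting $\varepsilon\to0$ completes the proof. The main obstacle is precisely this last maneuver: isolating the large-$\delta$ tail uniformly in $z$ via the decay $|w_{\delta,z}|_\infty=\delta^{-(N-2)/2}|w|_\infty$, and only then exploiting that the now boundedly sized support $\overline{B_\delta(z)}$ is pushed off to infinity.
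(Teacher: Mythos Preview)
Your proof is correct. The paper itself omits the proof of this lemma entirely, referring instead to \cite[Lemma~3.6]{CeMo} and \cite[Lemma~4.2]{LL}, so there is no in-paper argument to compare against. Your approach---using the scale invariance $|w_{\delta,z}|_{2^\ast}=|w|_{2^\ast}$, the support condition $\mathrm{supp}\,w_{\delta,z}\subset\overline{B_\delta(z)}$, and density of $C_c(\R^N)$ in $L^{N/2}(\R^N)$---is exactly the standard route taken in those references, and your handling of the third limit (splitting off the large-$\delta$ regime first, then exploiting disjoint supports for $\delta\le\Delta$) is the correct way to obtain uniformity in $\delta$.
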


\begin{lem}\label{lem5.3}
Denote the inner product in $\R^N$ by $\langle\cdot,\cdot\rangle_{\R^N}$
and let $r> 0$ be a fixed number. Then

\vskip 1mm
\noindent $(1)$ $\langle\beta(\sqrt{k_{0}}w_{\delta,z},\sqrt{l_{0}}w_{\delta,z}),z\rangle_{\R^N}>0$
for any $\delta> 0$ and $z\in\R^N\backslash\{0\}$;

\vskip 1mm
\noindent  $(2)$ $\lim_{\delta\rightarrow0^{+}}\gamma(\sqrt{k_{0}}w_{\delta,z},\sqrt{l_{0}}w_{\delta,z})=0$
uniformly for $z\in\R^N$;

\vskip 1mm
\noindent $(3)$ $\lim_{\delta\rightarrow+\infty}\gamma(\sqrt{k_{0}}w_{\delta,z},\sqrt{l_{0}}w_{\delta,z})=1$
uniformly for $z\in B_{r}(0)$.
\end{lem}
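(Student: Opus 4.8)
The plan is to reduce all three claims to explicit one–parameter integral identities. Since $w\ge0$ and $k_0,l_0>0$, for $(u,v)=(\sqrt{k_0}w_{\delta,z},\sqrt{l_0}w_{\delta,z})$ every positive part is a multiple of $w_{\delta,z}$, so the weight occurring in $\beta$ and $\gamma$ factors as $(\alpha_1k_0^2+\alpha_2l_0^2+2\beta k_0l_0)\,w_{\delta,z}^2(x)w_{\delta,z}^2(y)$, and the constant cancels between the numerators and $\|(u,v)\|_{NL}^4$. Using the scale–invariant identity $\int_{\R^N}\int_{\R^N}\frac{w_{\delta,z}^2(x)w_{\delta,z}^2(y)}{|x-y|^4}dxdy=\|w\|^2$ and the substitution $x=\delta\xi+z$, $y=\delta\zeta+z$, I would first record
$$
\beta(\sqrt{k_0}w_{\delta,z},\sqrt{l_0}w_{\delta,z})
=\frac{1}{\|w\|^2}\int_{\R^N}\int_{\R^N}\Phi(\delta\xi+z)\,
\frac{w^2(\xi)w^2(\zeta)}{|\xi-\zeta|^4}\,d\xi\,d\zeta
$$
and the analogue for $\gamma$ with $\Phi(\delta\xi+z)$ replaced by $\bigl|\Phi(\delta\xi+z)-\beta(\sqrt{k_0}w_{\delta,z},\sqrt{l_0}w_{\delta,z})\bigr|$, where $\Phi(x):=\frac{x}{1+|x|}$ and $\mbox{supp}\,w\subset B_1(0)$. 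All three statements will be read off from these representations.

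For $(1)$ I would dot with $z$ and symmetrize in the $\xi$ variable, using that the weight $\frac{w^2(\xi)w^2(\zeta)}{|\xi-\zeta|^4}$ is invariant under $(\xi,\zeta)\mapsto(-\xi,-\zeta)$ since $w$ is radial. The symmetrized integrand is $\tfrac12\bigl(\langle\Phi(\delta\xi+z),z\rangle+\langle\Phi(-\delta\xi+z),z\rangle\bigr)$, and a short computation shows it equals
$$
\frac{|z|^2(2+s)-4\delta^2\langle\xi,z\rangle^2/s}{2\,(1+|\delta\xi+z|)(1+|\delta\xi-z|)},
\qquad s:=|\delta\xi+z|+|\delta\xi-z|.
$$
Because $s\ge 2\delta|\xi|$ and $\langle\xi,z\rangle^2\le|\xi|^2|z|^2$, the numerator is $\ge\frac{|z|^2}{s}\bigl(s(2+s)-4\delta^2|\xi|^2\bigr)\ge\frac{|z|^2}{s}\cdot 4\delta|\xi|\ge 0$, and it is strictly positive for every $\xi$ when $z\ne0$ (the value at $\xi=0$ being $2|z|^2(1+|z|)>0$). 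Integrating against the positive weight gives $\langle\beta(\sqrt{k_0}w_{\delta,z},\sqrt{l_0}w_{\delta,z}),z\rangle_{\R^N}>0$.

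For $(2)$ I would use that $\Phi$ is globally Lipschitz on $\R^N$, so $|\Phi(\delta\xi+z)-\Phi(z)|\le C\delta|\xi|\le C\delta$ uniformly for $z\in\R^N$ and $\xi\in\mbox{supp}\,w$; hence $|\beta(\sqrt{k_0}w_{\delta,z},\sqrt{l_0}w_{\delta,z})-\Phi(z)|\le C\delta$, whence $|\Phi(\delta\xi+z)-\beta(\sqrt{k_0}w_{\delta,z},\sqrt{l_0}w_{\delta,z})|\le 2C\delta$ on $\mbox{supp}\,w$ and therefore $\gamma(\sqrt{k_0}w_{\delta,z},\sqrt{l_0}w_{\delta,z})\le 2C\delta\to0$ as $\delta\to0^+$, uniformly in $z$. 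For $(3)$, for each fixed $\xi\ne0$ and $z$ in a bounded set one has $\Phi(\delta\xi+z)=\frac{\xi+z/\delta}{|\xi+z/\delta|}\cdot\frac{|\delta\xi+z|}{1+|\delta\xi+z|}\to\frac{\xi}{|\xi|}$ as $\delta\to\infty$, the convergence being uniform for $z\in B_r(0)$ once $|\xi|$ is bounded away from $0$; since $|\Phi|<1$ and, again by the radial symmetry of $w$, $\int_{\R^N}\int_{\R^N}\frac{\xi}{|\xi|}\frac{w^2(\xi)w^2(\zeta)}{|\xi-\zeta|^4}d\xi\,d\zeta=0$, dominated convergence gives $\beta(\sqrt{k_0}w_{\delta,z},\sqrt{l_0}w_{\delta,z})\to0$, so $|\Phi(\delta\xi+z)-\beta(\cdots)|\to1$ for a.e.\ $\xi$, whence $\gamma(\sqrt{k_0}w_{\delta,z},\sqrt{l_0}w_{\delta,z})\to\frac{1}{\|w\|^2}\int_{\R^N}\int_{\R^N}\frac{w^2(\xi)w^2(\zeta)}{|\xi-\zeta|^4}d\xi\,d\zeta=1$. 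The uniformity in $z\in B_r(0)$ is recovered by splitting off a small ball $\{|\xi|<\varepsilon\}$, on which the double integral is small uniformly in $\delta,z$, and using the uniform convergence on $\{|\xi|\ge\varepsilon\}$.

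I expect the only real obstacle to be part $(1)$: one must choose the right symmetrization and then verify the elementary inequality $s(2+s)\ge 4\delta^2|\xi|^2$ that follows from $s\ge 2\delta|\xi|$, which is exactly what forces the sign. Parts $(2)$ and $(3)$ are routine once the representation formulas are available; there the only delicate point is the uniformity in $z$, controlled in $(2)$ by the global Lipschitz bound for $\Phi$ and in $(3)$ by the compact support of $w$ together with the truncation near the origin.
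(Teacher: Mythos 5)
Your proof is correct, and for parts (2) and (3) it follows essentially the same route as the paper: for (2), reduce to the pointwise Lipschitz bound $|\Phi(x)-\Phi(z)|\lesssim|x-z|$ on $\mathrm{supp}\,w_{\delta,z}\subset B_\delta(z)$ and apply the triangle inequality; for (3), first show $\beta(\sqrt{k_0}w_{\delta,z},\sqrt{l_0}w_{\delta,z})\to 0$ uniformly on $B_r(0)$ using the radial symmetry of $w$, then split off a small region where the energy concentration is negligible and pass to the limit on the remainder. Your formulation in the scaled variable $\xi=(x-z)/\delta$ versus the paper's direct use of $w_{\delta,z}$ is a cosmetic difference.

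Part (1) is where you genuinely diverge, and your version is cleaner. The paper splits the $x$-integral into the half-spaces $\langle x,z\rangle\gtrless 0$, substitutes $x\mapsto -x$ on the second, and writes the result as $\int_{\langle x,z\rangle>0}\frac{\langle x,z\rangle}{1+|x|}\int\frac{(w_{\delta,z}^2(x)-w_{\delta,z}^2(-x))w_{\delta,z}^2(y)}{|x-y|^4}\,dy\,dx$, invoking the pointwise monotonicity $w_{\delta,z}(x)\ge w_{\delta,z}(-x)$ on $\{\langle x,z\rangle>0\}$. As written this skips the fact that flipping $x$ alone turns the kernel $|x-y|^{-4}$ into $|x+y|^{-4}$; to restore the kernel one must also flip $y$, which produces $w_{\delta,z}^2(x)w_{\delta,z}^2(y)-w_{\delta,z}^2(-x)w_{\delta,z}^2(-y)$ instead, and that expression is not pointwise nonnegative over all $y$. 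Your approach avoids this entirely: symmetrizing the weight under $(\xi,\zeta)\mapsto(-\xi,-\zeta)$ is exact because $w$ is radial and $|\xi-\zeta|=|(-\xi)-(-\zeta)|$, and the resulting closed-form
$$
\tfrac12\bigl\langle\Phi(\delta\xi+z)+\Phi(-\delta\xi+z),\,z\bigr\rangle
=\frac{|z|^2(2+s)-4\delta^2\langle\xi,z\rangle^2/s}{2\,(1+|\delta\xi+z|)(1+|\delta\xi-z|)},
\quad s=|\delta\xi+z|+|\delta\xi-z|,
$$
together with $s\ge 2\delta|\xi|$ and Cauchy--Schwarz, gives a pointwise strictly positive integrand for $z\neq0$ (including at $\xi=0$, where it equals $2|z|^2(1+|z|)$), so the sign of the integral is immediate and no monotonicity of $w$ is even needed beyond its being radial, nonnegative and nontrivial. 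This is a more robust argument, and it buys you a quantitative lower bound if one were ever needed.
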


\begin{proof}[\bf Proof.]
(1) Let $\delta> 0$ and $z\in\R^N\backslash\{0\}$. For any $x\in\R^N$ with
$\langle x,z\rangle_{\R^N}>0$, there holds $|-x-z|>|x-z|$. Then from the
properties of $w$ we see that $w_{\delta,z}(x)\geq w_{\delta,z}(-x)$ for any
$x\in\R^N$ with  $\langle x,z\rangle_{\R^N}>0$ and
meas\,$\{x\in\R^N|\langle x,z\rangle_{\R^N}>0,w_{\delta,z}(x)> w_{\delta,z}(-x)\}>0$.
Thus we have
$$
\aligned
&\quad\,\int_{\R^{N}}\frac{\langle x,z\rangle_{\R^N}}{1+|x|}
\int_{\R^N}\frac{w^2_{\delta,z}(x) w^2_{\delta,z}(y)}{|x-y|^{4}}dy\,dx\\
&=\int_{\{x\in\R^{N}|\langle x,z\rangle_{\R^N}>0\}}\frac{\langle x,z\rangle_{\R^N}}{1+|x|}
\int_{\R^N}\frac{w^2_{\delta,z}(x) w^2_{\delta,z}(y)}{|x-y|^{4}}dy\,dx\\
&\quad\,+\int_{\{x\in\R^{N}|\langle x,z\rangle_{\R^N}<0\}}\frac{\langle x,z\rangle_{\R^N}}{1+|x|}
\int_{\R^N}\frac{w^2_{\delta,z}(x) w^2_{\delta,z}(y)}{|x-y|^{4}}dy\,dx\\
&=\int_{\{x\in\R^{N}|\langle x,z\rangle_{\R^N}>0\}}\frac{\langle x,z\rangle_{\R^N}}{1+|x|}
\int_{\R^N}\frac{(w^2_{\delta,z}(x)-w^2_{\delta,z}(-x)) w^2_{\delta,z}(y)}{|x-y|^{4}}dy\,dx\\
&>0,
\endaligned
$$
which indicates that
$$
\langle\beta(\sqrt{k_{0}}w_{\delta,z},\sqrt{l_{0}}w_{\delta,z}),z\rangle_{\R^N}
=\frac{1}{\|w\|^{2}}\int_{\R^N}\frac{\langle x,z\rangle_{\R^N}}{1+|x|}
\int_{\R^N}\frac{w^2_{\delta,z}(x) w^2_{\delta,z}(y)}{|x-y|^{4}}dy\,dx
>0
$$
for any $\delta> 0$ and $z\in\R^N\backslash\{0\}$.

(2) For any $\delta> 0$ and $z\in\R^N$, we have
$$
\Big|\frac{z}{1+|z|}-\beta(\sqrt{k_{0}}w_{\delta,z},\sqrt{l_{0}}w_{\delta,z})\Big|
\leq \frac{1}{\|w\|^2}\int_{\R^N}\Big|\frac{z}{1+|z|}-\frac{ x}{1+|x|}\Big|
\int_{\R^N}\frac{w^2_{\delta,z}(x) w^2_{\delta,z}(y)}{|x-y|^{4}}dy\,dx
\leq\delta,
$$
where we have used the fact that $|\frac{z}{1+|z|}-\frac{ x}{1+|x|}|<\delta$
for any $x\in B_{\delta}(z)$. Then
$$
\aligned
0
&\leq\gamma(\sqrt{k_{0}}w_{\delta,z},\sqrt{l_{0}}w_{\delta,z})\\
&=\frac{1}{\|w\|^{2}}\int_{\R^N}\Big|\frac{ x}{1+|x|}
-\beta(\sqrt{k_{0}}w_{\delta,z},\sqrt{l_{0}}w_{\delta,z})\Big|
\int_{\R^N}\frac{w^2_{\delta,z}(x) w^2_{\delta,z}(y)}{|x-y|^{4}}dy\,dx\\
&\leq\frac{1}{\|w\|^{2}}\int_{\R^N}\Big|\frac{ x}{1+|x|}-\frac{z}{1+|z|}\Big|
\int_{\R^N}\frac{w^2_{\delta,z}(x) w^2_{\delta,z}(y)}{|x-y|^{4}}dy\,dx\\
&\quad\,+\frac{1}{\|w\|^{2}}\int_{\R^N}\Big|\frac{z}{1+|z|}
-\beta(\sqrt{k_{0}}w_{\delta,z},\sqrt{l_{0}}w_{\delta,z})\Big|
\int_{\R^N}\frac{w^2_{\delta,z}(x) w^2_{\delta,z}(y)}{|x-y|^{4}}dy\,dx\\
&\leq2\delta,
\endaligned
$$
which implies that
$$
\lim_{\delta\rightarrow0^{+}}\gamma(\sqrt{k_{0}}w_{\delta,z},\sqrt{l_{0}}w_{\delta,z})=0
$$
uniformly for $z\in\R^N$.

(3) We first claim that
\begin{equation}\label{eq5.9}
\lim_{\delta\rightarrow+\infty}\beta(\sqrt{k_{0}}w_{\delta,z},\sqrt{l_{0}}w_{\delta,z})=0
\end{equation}
uniformly for $z\in B_{r}(0)$. Indeed, since $w_{\delta,0}$ is radially symmetric,
we have
$$
\int_{\R^N}\frac{ x}{1+|x|}
\int_{\R^N}\frac{w^2_{\delta,0}(x) w^2_{\delta,0}(y)}{|x-y|^{4}}dy\,dx=0
$$
and so
$$
\aligned
\Big|\beta(\sqrt{k_{0}}w_{\delta,z},\sqrt{l_{0}}w_{\delta,z})\Big|
&=\frac{1}{\|w\|^{2}}\Big|\int_{\R^N}\frac{ x}{1+|x|}
\int_{\R^N}\frac{w^2_{\delta,z}(x) w^2_{\delta,z}(y)}{|x-y|^{4}}dy\,)dx\Big|\\
&=\frac{1}{\|w\|^{2}}\Big|\int_{\R^N}\frac{ x}{1+|x|}
\int_{\R^N}\frac{w^2_{\delta,z}(x) w^2_{\delta,z}(y)
-w^2_{\delta,0}(x) w^2_{\delta,0}(y)}{|x-y|^{4}}dy\,dx\Big|\\
&\leq\frac{1}{\|w\|^{2}}\int_{\R^N}
\int_{\R^N}\frac{|w^2_{\delta,z}(x) w^2_{\delta,z}(y)
-w^2_{\delta,0}(x) w^2_{\delta,0}(y)|}{|x-y|^{4}}dy\,dx\\
&=\frac{1}{\|w\|^{2}}\int_{\R^N}
\int_{\R^N}\frac{|w^2_{1,z/\delta}(x) w^2_{1,z/\delta}(y)
-w^2_{1,0}(x) w^2_{1,0}(y)|}{|x-y|^{4}}dy\,dx\\
&\rightarrow0
\endaligned
$$
as $\delta\rightarrow+\infty$, uniformly for $z\in B_{r}(0)$.

For $\varepsilon> 0$, we fix a constant $\rho=\rho(\varepsilon)> 0$ such
that $\frac{1}{1+\rho}<\frac{\varepsilon}{3}$. For such a $\rho$, we see
from \eqref{eq5.9} that
$$
\lim_{\delta\rightarrow+\infty}\int_{B_{\rho}(0)}
\int_{\R^N}\frac{w^2_{\delta,z}(x) w^2_{\delta,z}(y)}{|x-y|^{4}}dy\,dx
=0
$$
uniformly for $z\in B_{r}(0)$ and that there exists $\delta_{0}> 0$ such that
$$
\big|\beta(\sqrt{k_{0}}w_{\delta,z},\sqrt{l_{0}}w_{\delta,z})\big|<\frac{\varepsilon}{3}
$$
and
$$
\frac{1}{\|w\|^{2}}\int_{B_{\rho}(0)}
\int_{\R^N}\frac{w^2_{\delta,z}(x) w^2_{\delta,z}(y)}{|x-y|^{4}}dy\,dx
<\frac{\varepsilon}{3}
$$
for all $\delta\in(\delta_{0},+\infty)$ and $z\in B_{r}(0)$. Observe that
$$
\gamma(\sqrt{k_{0}}w_{\delta,z},\sqrt{l_{0}}w_{\delta,z})
=\frac{1}{\|w\|^{2}}\int_{\R^N}\Big|\frac{ x}{1+|x|}
-\beta(\sqrt{k_{0}}w_{\delta,z},\sqrt{l_{0}}w_{\delta,z})\Big|
\int_{\R^N}\frac{w^2_{\delta,z}(x) w^2_{\delta,z}(y)}{|x-y|^{4}}dy\,dx
<1+\frac{\varepsilon}{3}
$$
for all $\delta\in(\delta_{0},+\infty)$ and $z\in B_{r}(0)$. On the other hand,
for all $\delta\in(\delta_{0},+\infty)$ and $z\in B_{r}(0)$ we have
$$
\aligned
\gamma(\sqrt{k_{0}}w_{\delta,z},\sqrt{l_{0}}w_{\delta,z})
&=\frac{1}{\|w\|^{2}}\int_{\R^N}\Big|\frac{ x}{1+|x|}
-\beta(\sqrt{k_{0}}w_{\delta,z},\sqrt{l_{0}}w_{\delta,z})\Big|
\int_{\R^N}\frac{w^2_{\delta,z}(x) w^2_{\delta,z}(y)}{|x-y|^{4}}dy\,dx\\
&\geq\frac{1}{\|w\|^{2}}\int_{\R^N}\frac{ |x|}{1+|x|}
\int_{\R^N}\frac{w^2_{\delta,z}(x) w^2_{\delta,z}(y)}{|x-y|^{4}}dy\,dx
-\frac{\varepsilon}{3}\\
&\geq\frac{1}{\|w\|^{2}}\int_{\R^N\backslash B_{\rho}(0)}\frac{ |x|}{1+|x|}
\int_{\R^N}\frac{w^2_{\delta,z}(x) w^2_{\delta,z}(y)}{|x-y|^{4}}dy\,dx-\frac{\varepsilon}{3}\\
&\geq\frac{\rho}{1+\rho}-\frac{1}{\|w\|^{2}}\int_{B_{\rho}(0)}
\int_{\R^N}\frac{w^2_{\delta,z}(x) w^2_{\delta,z}(y)}{|x-y|^{4}}dy\,dx-\frac{\varepsilon}{3}\\
&\geq1-\frac{1}{1+\rho}-\frac{\varepsilon}{3}-\frac{\varepsilon}{3}\\
&>1-\varepsilon.
\endaligned$$
Therefore, we have
$$
\lim_{\delta\rightarrow+\infty}\gamma(\sqrt{k_{0}}w_{\delta,z},\sqrt{l_{0}}w_{\delta,z})=1
$$ uniformly for $z\in B_{r}(0)$.
\end{proof}

For simplicity, we define $T:\R^+\times\R^N\rightarrow H$ by
$$
T(\delta,z)=(\sqrt{k_{0}}w_{\delta,z},\sqrt{l_{0}}w_{\delta,z})
$$
and $\Theta:H\backslash\{(0,0)\}\rightarrow\mathcal{N}$ by
$$
\Theta(u,v)=(t_{(u,v)}|u|,t_{(u,v)}|v|),
$$
where $t_{(u,v)} > 0$ is given by
$$
t_{(u,v)}^{2}=\frac{\|(u,v)\|^{2}+\int_{\R^N}( V_1(x)u^{2}+V_2(x)v^{2})dx}
{\displaystyle\int_{\R^N}\int_{\R^N}\frac{\alpha_1 u^2(x) u^2(y)+\alpha_2 v^2(x)v^2(y)
+2\beta u^2(x) v^2(y)}{|x-y|^{4}}dxdy}.
$$
We have, for  $\delta> 0$ and $z\in\R^N$,
$$
\aligned
J(\Theta\circ T(\delta,z))
&=\frac{1}{4}\frac{\displaystyle[(k_{0}+l_{0})\|w_{\delta,z}\|^{2}
+\int_{\R^N}(k_{0} V_1(x)w_{\delta,z}^{2}+ l_{0}V_2(x)w_{\delta,z}^{2})dx]^{2}}
{\displaystyle(k_{0}+l_{0})\int_{\R^N}\int_{\R^N}
\frac{w^2_{\delta,z}(x) w^2_{\delta,z}(y)}{|x-y|^{4}}dxdy}\\
&=\frac{1}{4}\frac{\displaystyle[(k_{0}+l_{0})\|w\|^{2}
+\int_{\R^N}(k_{0} V_1(x)w_{\delta,z}^{2}+ l_{0}V_2(x)w_{\delta,z}^{2})dx]^{2}}
{\displaystyle(k_{0}+l_{0})\int_{\R^N}\int_{\R^N}
\frac{w^2(x) w^2(y)}{|x-y|^{4}}dxdy}.
\endaligned$$
Then, by Lemma \ref{lem5.2} and by \eqref{eq5.4}, we can fix $R_0 > 0$ such that
$J(\Theta\circ T(\delta,z))<c^{\star}$ for all $\delta> 0$ and $|z|\geq R_0$.
Moreover, as a consequence of Lemmas \ref{lem5.2} and \ref{lem5.3}, we have

\begin{lem}\label{lem5.4}
$(1)$ There exists $\delta_{1}\in(0,\frac{1}{2})$ such that
$$
J(\Theta\circ T(\delta,z))<c^{\star}
$$
and
$$
\gamma(\sqrt{k_{0}}w_{\delta,z},\sqrt{l_{0}}w_{\delta,z})<\frac{1}{2}
$$
for all $\delta\in(0,\delta_{1}]$ and $z\in \R^N$.\\
$(2)$ There exists $\delta_{2}\in(\frac{1}{2},+\infty)$ such that
$$
J(\Theta\circ T(\delta,z))<c^{\star}
$$
and
$$
\gamma(\sqrt{k_{0}}w_{\delta,z},\sqrt{l_{0}}w_{\delta,z})>\frac{1}{2}
$$
for all $\delta\in[\delta_{2},+\infty)$ and $z\in B_{R_0}(0)$.
\end{lem}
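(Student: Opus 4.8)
The plan is to read off both parts of the lemma directly from the explicit formula for $J(\Theta\circ T(\delta,z))$ displayed immediately before the statement, combined with Lemmas \ref{lem5.2} and \ref{lem5.3}. Recall that the test function $w$ was constructed so that $\|w\|^{2}=\int_{\R^N}\int_{\R^N}\frac{w^2(x)w^2(y)}{|x-y|^4}\,dxdy$; hence the displayed identity simplifies to
\[
J(\Theta\circ T(\delta,z))=\frac{1}{4(k_{0}+l_{0})\|w\|^{2}}\Big[(k_{0}+l_{0})\|w\|^{2}+\int_{\R^N}\big(k_{0}V_1(x)+l_{0}V_2(x)\big)w_{\delta,z}^{2}\,dx\Big]^{2}.
\]
Since $k_{0}V_1+l_{0}V_2\in L^{\frac{N}{2}}(\R^N)$, Lemma \ref{lem5.2} yields $\int_{\R^N}(k_{0}V_1+l_{0}V_2)w_{\delta,z}^{2}\,dx\to0$ both as $\delta\to0^{+}$ and as $\delta\to+\infty$, uniformly in $z\in\R^N$. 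Consequently $J(\Theta\circ T(\delta,z))\to\frac{k_{0}+l_{0}}{4}\|w\|^{2}$ uniformly in $z$ in each of these limits, and by \eqref{eq5.4} this limiting value is strictly smaller than $c^{\star}$.

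For part (1), the uniform limit as $\delta\to0^{+}$ gives a number $\delta_{1}'>0$ such that $J(\Theta\circ T(\delta,z))<c^{\star}$ for all $\delta\in(0,\delta_{1}']$ and all $z\in\R^N$. Lemma \ref{lem5.3}(2) provides $\delta_{1}''>0$ with $\gamma(\sqrt{k_{0}}w_{\delta,z},\sqrt{l_{0}}w_{\delta,z})<\tfrac12$ for $\delta\in(0,\delta_{1}'']$ and all $z\in\R^N$. Setting $\delta_{1}=\min\{\delta_{1}',\delta_{1}'',\tfrac14\}\in(0,\tfrac12)$ proves the first assertion.

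For part (2), the uniform limit as $\delta\to+\infty$ gives $\delta_{2}'$ with $J(\Theta\circ T(\delta,z))<c^{\star}$ for all $\delta\geq\delta_{2}'$ and all $z\in\R^N$, in particular for $z\in B_{R_0}(0)$. Applying Lemma \ref{lem5.3}(3) with $r=R_0$ furnishes $\delta_{2}''$ such that $\gamma(\sqrt{k_{0}}w_{\delta,z},\sqrt{l_{0}}w_{\delta,z})>\tfrac12$ for all $\delta\geq\delta_{2}''$ and $z\in B_{R_0}(0)$. Taking $\delta_{2}=\max\{\delta_{2}',\delta_{2}'',1\}\in(\tfrac12,+\infty)$ completes the proof.

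There is no substantial obstacle in this lemma: the real work was already carried out in Lemmas \ref{lem5.2}–\ref{lem5.3} and in the construction of $w$ satisfying \eqref{eq5.4}. The only point deserving care is to track that every limit invoked is \emph{uniform} in $z$ over the appropriate range — all of $\R^N$ for the statements involving $\delta\to0^{+}$ and for the bound on $J$ as $\delta\to+\infty$, and $B_{R_0}(0)$ for the statement on $\gamma$ as $\delta\to+\infty$ — which is precisely what those two lemmas deliver. One should also verify that the common limiting value of $J(\Theta\circ T(\delta,z))$ in both regimes is indeed $\frac{k_{0}+l_{0}}{4}\|w\|^{2}$ and that \eqref{eq5.4} is the inequality placing it strictly below $c^{\star}$; both are immediate from the definitions.
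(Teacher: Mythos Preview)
Your proof is correct and follows exactly the approach the paper indicates: the paper does not give an explicit proof of this lemma but simply states that it is ``a consequence of Lemmas \ref{lem5.2} and \ref{lem5.3}'' together with the formula for $J(\Theta\circ T(\delta,z))$ and \eqref{eq5.4}, which is precisely the argument you spell out.
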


\begin{lem}\label{lem5.5}
Denote $D=[\delta_{1},\delta_{2}]\times B_{R_0}(0)$ and define a map
$g:D\rightarrow\R^+\times\R^N$ by
$$
g(\delta,z)=(\gamma \circ\Theta\circ T(\delta,z),\beta\circ\Theta\circ T(\delta,z)).
$$
Then we have
$$
\text{\rm deg}\,\Big(g,D,\Big(\frac{1}{2},0\Big)\Big)=1.
$$
\end{lem}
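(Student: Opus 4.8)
The plan is to compute the degree by reducing the map $g$ on $D = [\delta_1,\delta_2]\times B_{R_0}(0)$ to a map whose degree is easily identified, using the homotopy invariance of the Brouwer degree. First I would verify that $(\frac12,0)\notin g(\partial D)$, so that the degree is well defined. The boundary $\partial D$ decomposes into three pieces: $\{\delta_1\}\times\overline{B_{R_0}(0)}$, $\{\delta_2\}\times\overline{B_{R_0}(0)}$, and $[\delta_1,\delta_2]\times\partial B_{R_0}(0)$. On the first piece Lemma \ref{lem5.4}(1) gives $\gamma\circ\Theta\circ T(\delta_1,z)<\frac12$ for all $z$, so the first component of $g$ is $<\frac12$ and $g\ne(\frac12,0)$ there; similarly on the second piece Lemma \ref{lem5.4}(2) gives $\gamma\circ\Theta\circ T(\delta_2,z)>\frac12$ for $z\in B_{R_0}(0)$, hence $\ne(\frac12,0)$. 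On the lateral piece, where $|z|=R_0$, I would use Lemma \ref{lem5.3}(1): since $\beta\circ\Theta\circ T(\delta,z)$ has positive inner product with $z$ (note $\Theta\circ T(\delta,z)=T(\delta,z)$ up to the positive scalar $t_{(u,v)}$, which does not affect the sign of $\langle\beta(\cdot),z\rangle_{\R^N}$ since $\beta$ is scale-invariant under $t$-rescaling), the second component of $g$ cannot vanish when $z\ne0$, so $g\ne(\frac12,0)$ on the lateral boundary as well. Thus the degree $\mathrm{deg}(g,D,(\frac12,0))$ is well defined.

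Next I would build an explicit homotopy. The natural idea is to deform $g$ to the identity-like map $(\delta,z)\mapsto\big(\varphi(\delta),z\big)$ for a suitable monotone $\varphi:[\delta_1,\delta_2]\to\R$ with $\varphi(\delta_1)<\frac12<\varphi(\delta_2)$, whose degree with respect to $(\frac12,0)$ is $1$ by the product formula for the degree (one-dimensional degree of $\varphi$ times the degree of the identity on $B_{R_0}(0)$). Concretely, set
$$
H_t(\delta,z)=\Big((1-t)\,\gamma\circ\Theta\circ T(\delta,z)+t\,\varphi(\delta),\ \ (1-t)\,\beta\circ\Theta\circ T(\delta,z)+t\,z\Big),\qquad t\in[0,1].
$$
I must check $(\frac12,0)\notin H_t(\partial D)$ for every $t\in[0,1]$. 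On the two horizontal faces this follows because both $\gamma\circ\Theta\circ T$ and $\varphi$ stay on the same side of $\frac12$ (strictly $<\frac12$ on $\delta=\delta_1$, strictly $>\frac12$ on $\delta=\delta_2$), so the convex combination does too. On the lateral face $|z|=R_0$: both $\beta\circ\Theta\circ T(\delta,z)$ and $z$ have strictly positive inner product with $z$ (the former by Lemma \ref{lem5.3}(1)), hence so does the convex combination, which therefore cannot be $0$. By homotopy invariance, $\mathrm{deg}(g,D,(\frac12,0))=\mathrm{deg}(H_1,D,(\frac12,0))=\mathrm{deg}(\varphi,[\delta_1,\delta_2],\tfrac12)\cdot\mathrm{deg}(\mathrm{id},B_{R_0}(0),0)=1\cdot1=1$.

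The step I expect to be the main obstacle is not the degree bookkeeping but the \emph{continuity of $g$ on the compact set $D$}, in particular the continuity (and non-vanishing of the denominator) of $\Theta$ and of the barycenter functionals $\beta,\gamma$ along the family $T(\delta,z)=(\sqrt{k_0}w_{\delta,z},\sqrt{l_0}w_{\delta,z})$. Here one uses that $w\in\mathcal C_0^\infty(\R^N)$ is a fixed nonzero function, so $\|w_{\delta,z}\|_{NL}>0$ is bounded away from $0$ uniformly on $D$, that $t_{(u,v)}$ depends continuously on $(\delta,z)$ via the explicit formula (the potentials $V_1,V_2\in L^{N/2}$ enter continuously by dominated convergence and the Hardy–Littlewood–Sobolev inequality), and that $(\delta,z)\mapsto w_{\delta,z}$ is continuous into $D^{1,2}(\R^N)$ on $(0,\infty)\times\R^N$. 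Once continuity is in hand, the three sign facts from Lemmas \ref{lem5.3}(1), \ref{lem5.4}(1)-(2) make the homotopy argument go through routinely, and the conclusion $\mathrm{deg}(g,D,(\frac12,0))=1$ follows.
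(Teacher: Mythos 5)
Your proposal is correct and takes essentially the same approach as the paper: the paper uses the straight-line homotopy $G(s,\delta,z)=(1-s)(\delta,z)+s\,g(\delta,z)$ between the identity and $g$, and verifies $(\tfrac12,0)\notin G([0,1]\times\partial D)$ on the same three boundary pieces using Lemma \ref{lem5.4}(1), Lemma \ref{lem5.4}(2), and the positivity of $\langle\beta(\sqrt{k_0}w_{\delta,z},\sqrt{l_0}w_{\delta,z}),z\rangle_{\R^N}$ from Lemma \ref{lem5.3}(1), concluding by homotopy invariance and normalization. Your version with a general monotone $\varphi$ reduces to theirs on taking $\varphi(\delta)=\delta$, and your explicit remarks on the scale-invariance of $\beta,\gamma$ under the $\Theta$-normalization and on the continuity of $g$ on $D$ are correct and fill in steps the paper leaves implicit.
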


\begin{proof}[\bf Proof.]
Consider the homotopy map $G : [0, 1]\times D\rightarrow \R^+\times\R^N$
defined by
$$
G(s,\delta,z)=(1-s)(\delta,z)+sg(\delta,z).
$$
We claim
\begin{equation}\label{eq5.10}
\Big(\frac{1}{2},0\Big)\not\in G([0, 1]\times \partial D).
\end{equation}
If this is true, then the conclusion follows easily from the homotopy invariance
and normalization of degree.

Now we verify \eqref{eq5.10}. If $\delta=\delta_{1}$ and $z\in B_{R_0}(0)$,
then we see from Lemma \ref{lem5.4}(1) that
$$
(1-s)\delta_{1}+s\gamma \circ\Theta\circ T(\delta_{1},z)
=(1-s)\delta_{1}+s\gamma(\sqrt{k_{0}}w_{\delta_1,z},\sqrt{l_{0}}w_{\delta_1,z})
<\frac{1}{2}.
$$
If $\delta=\delta_{2}$ and $z\in B_{R_0}(0)$, then it follows from Lemma
\ref{lem5.4}(2) that
$$
(1-s)\delta_{1}+s\gamma \circ\Theta\circ T(\delta_{2},z)
=(1-s)\delta_{2}+s\gamma(\sqrt{k_{0}}w_{\delta_2,z},\sqrt{l_{0}}w_{\delta_2,z})
>\frac{1}{2}.
$$
If $\delta\in[\delta_{1},\delta_{2}]$ and $|z|=R_0$, then using Lemma
\ref{lem5.3}(1) yields
$$
\langle(1-s)z+s\beta \circ\Theta\circ T(\delta,z),z\rangle
=\langle(1-s)z+s\beta(\sqrt{k_{0}}w_{\delta,z},\sqrt{l_{0}}w_{\delta,z}),z\rangle_{\R^N}
>0,
$$
which implies that $(1-s)z+s\beta \circ\Theta\circ T(\delta,z)\neq0$. Therefore,
\eqref{eq5.10} holds.
\end{proof}

Setting $\mathbb{A}=\Theta\circ T(D)$ and
$\Gamma=\{h\in C(\mathbb{A},\mathcal{N}):h|_{\partial\mathbb{A}}=id\}$,
we have

\begin{lem}\label{lem5.6}
$\mathcal{M}$ and $\partial\mathbb{A}$ link with respect to $\Gamma$.
\end{lem}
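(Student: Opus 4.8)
Recall that ``$\mathcal{M}$ and $\partial\mathbb{A}$ link with respect to $\Gamma$'' means precisely that $\mathcal{M}\cap\partial\mathbb{A}=\emptyset$ and $h(\mathbb{A})\cap\mathcal{M}\neq\emptyset$ for every $h\in\Gamma$. The plan is to deduce the non-intersection statement from the sign information in Lemmas \ref{lem5.3} and \ref{lem5.4}, and to obtain the intersection statement from the Brouwer degree computation in Lemma \ref{lem5.5}, using that the degree is unchanged by a deformation that fixes the boundary. The preliminary observation needed throughout is that $\gamma$ and $\beta$ are invariant under the rescaling $(u,v)\mapsto(tu,tv)$, $t>0$ (they are normalized by $\|\cdot\|_{NL}^{4}$), so that $\gamma(\Theta\circ T(\delta,z))=\gamma(\sqrt{k_{0}}w_{\delta,z},\sqrt{l_{0}}w_{\delta,z})$ and similarly for $\beta$; hence Lemmas \ref{lem5.3} and \ref{lem5.4}, stated for $T$, apply verbatim to $\Theta\circ T$.

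First I would record the topology of $\mathbb{A}$. The map $T$ is injective, since distinct pairs $(\delta,z)$ give functions $w_{\delta,z}$ with distinct closed ball supports $\overline{B_{\delta}(z)}$, and $\Theta$ acts on the nonnegative pair $T(\delta,z)$ by a strictly positive rescaling; thus $\Theta\circ T$ is a continuous injection of the compact set $D$ into the Hausdorff space $H$, hence a homeomorphism onto $\mathbb{A}=\Theta\circ T(D)$. Since $D$ is a manifold with boundary $\partial D=(\{\delta_{1},\delta_{2}\}\times\overline{B_{R_0}(0)})\cup([\delta_{1},\delta_{2}]\times\partial B_{R_0}(0))$, this forces $\partial\mathbb{A}=\Theta\circ T(\partial D)$.

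Next I would check $\mathcal{M}\cap\partial\mathbb{A}=\emptyset$. Let $(\delta,z)\in\partial D$. If $\delta=\delta_{1}$ then $\gamma(\Theta\circ T(\delta,z))<\frac12$ by Lemma \ref{lem5.4}(1); if $\delta=\delta_{2}$ and $|z|<R_0$ then $\gamma(\Theta\circ T(\delta,z))>\frac12$ by Lemma \ref{lem5.4}(2); and if $|z|=R_0$ then $\langle\beta(\Theta\circ T(\delta,z)),z\rangle_{\R^N}>0$ by Lemma \ref{lem5.3}(1), so $\beta(\Theta\circ T(\delta,z))\neq0$. In every case $\Theta\circ T(\delta,z)\notin\mathcal{M}$, giving $\mathcal{M}\cap\partial\mathbb{A}=\emptyset$; the same computation shows $(\frac12,0)\notin g(\partial D)$ for the map $g$ of Lemma \ref{lem5.5}. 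For the intersection statement, fix $h\in\Gamma$ and set $\phi:=h\circ\Theta\circ T:D\to\mathcal{N}$; define $\tilde g:D\to\R^{+}\times\R^N$ by $\tilde g(\delta,z)=(\gamma(\phi(\delta,z)),\beta(\phi(\delta,z)))$, which is continuous because $\|(u,v)\|_{NL}>0$ on $\mathcal{N}$ (an element of $\mathcal{N}$ with vanishing nonlocal norm would be trivial by the Nehari identity together with $V_{i}\geq0$). Since $\Theta\circ T(\partial D)=\partial\mathbb{A}$ and $h|_{\partial\mathbb{A}}=\mathrm{id}$, we have $\phi=\Theta\circ T$ on $\partial D$, hence $\tilde g=g$ on $\partial D$. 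As $(\frac12,0)\notin g(\partial D)=\tilde g(\partial D)$, the affine homotopy $s g+(1-s)\tilde g$ avoids $(\frac12,0)$ on $\partial D$, so by homotopy invariance $\deg(\tilde g,D,(\frac12,0))=\deg(g,D,(\frac12,0))=1$ by Lemma \ref{lem5.5}. Therefore there exists $(\delta_{0},z_{0})\in D$ with $\gamma(\phi(\delta_{0},z_{0}))=\frac12$ and $\beta(\phi(\delta_{0},z_{0}))=0$; since $\phi(\delta_{0},z_{0})\in\mathcal{N}$ this means $\phi(\delta_{0},z_{0})\in\mathcal{M}$, and $\phi(\delta_{0},z_{0})=h(\Theta\circ T(\delta_{0},z_{0}))\in h(\mathbb{A})$. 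Hence $h(\mathbb{A})\cap\mathcal{M}\neq\emptyset$, completing the proof.

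The main obstacle is the topological identification $\partial\mathbb{A}=\Theta\circ T(\partial D)$ — that is, the injectivity of $\Theta\circ T$ and its being a homeomorphism onto its image — combined with the scale invariance of $\beta,\gamma$ that lets Lemmas \ref{lem5.3}--\ref{lem5.4} transfer from $T$ to $\Theta\circ T$. Once those points are in place, the degree argument for the intersection statement is routine.
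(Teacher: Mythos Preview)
Your proof is correct and, for the intersection statement $h(\mathbb{A})\cap\mathcal{M}\neq\emptyset$, follows exactly the paper's route: pull back $h$ through the parametrization $\Theta\circ T$, observe that the resulting map agrees with $g$ on $\partial D$, and invoke the degree computation of Lemma~\ref{lem5.5}. Your extra care about the identification $\partial\mathbb{A}=\Theta\circ T(\partial D)$ and the scale invariance of $\beta,\gamma$ is not strictly needed in the paper's framework (where $\partial\mathbb{A}$ is \emph{defined} as $\Theta\circ T(\partial D)$), but it does no harm.

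The one genuine difference is in how you show $\mathcal{M}\cap\partial\mathbb{A}=\emptyset$. You argue case by case on $\partial D$, using Lemmas~\ref{lem5.3} and~\ref{lem5.4} to force either $\gamma\neq\tfrac12$ or $\beta\neq 0$. The paper instead uses a single energy observation: by the choice of $R_0$ and Lemma~\ref{lem5.4}, every point of $\partial\mathbb{A}$ satisfies $J(u,v)<c^{\star}=\inf_{\mathcal{M}}J$, hence cannot lie in $\mathcal{M}$. The paper's argument is shorter and exploits the very definition of $c^{\star}$; your argument is more self-contained and in fact reproduces the boundary verification already carried out inside the proof of Lemma~\ref{lem5.5}. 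Both are valid.
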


\begin{proof}[\bf Proof.]
Assume that $(u, v)\in\partial\mathbb{A}= \Theta\circ T(\partial D)$. From
the choice of $R_0$ and Lemma \ref{lem5.4}, we see that $J(u, v)<c^{\star}$
which implies $(u, v)\not\in\mathcal{M}$. Therefore, we have
$\mathcal{M}\cap \partial\mathbb{A}=\emptyset$.

For any $h\in \Gamma$, we define a continuous map
$\overline{g} : D\rightarrow \R^+\times\R^N$ by
$$
\overline{g}(\delta,z)
=(\gamma\circ h\circ\Theta\circ T(\delta,z),\beta\circ h\circ\Theta\circ T(\delta,z)).
$$
Since $h|_{\partial\mathbb{A}} = id$, we have
$\overline{g}|_{\partial D}=g|_{\partial D}$. Then it follows from Lemma
\ref{lem5.5} that
$$
\text{\rm deg}\,\Big(\overline{g},D,\Big(\frac{1}{2},0\Big)\Big)
=\text{\rm deg}\,\Big(g,D,\Big(\frac{1}{2},0\Big)\Big)=1.
$$
By the Kronecker existence theorem, there is
$(\overline{\delta},\overline{z})\in D$ such that
$h\circ\Theta\circ T(\overline{\delta},\overline{z})\in\mathcal{M}$. Then
we conclude that $\mathcal{M}\cap h(\mathbb{A})\neq\emptyset$.
\end{proof}

Now we are in a position to prove the main result.

\begin{proof}[\bf Proof of Theorem \ref{thm1.2}.]
Define the minimax value
$$
d=\inf_{h\in \Gamma}\max_{(u,v)\in\mathbb{A}}J(h(u,v)).
$$
Lemma \ref{lem5.6} indicates that
$\mathcal{M}\cap h(\mathbb{A})\neq\emptyset$ for any $h\in \Gamma$.
Then $d\geq c^{\star}>c_{\infty}$. Since $id\in\Gamma$, we have
$$
\aligned
d
&\leq \max_{(u,v)\in\mathbb{A}}J(u,v)\\
&\leq \max_{(\delta,z)\in\R^+\times\R^N}J(\Theta\circ T(\delta,z))\\
&\leq \max_{(\delta,z)\in\R^+\times\R^N}\frac{1}{4}
\frac{\displaystyle\big[(k_{0}+l_{0})\|w_{\delta,z}\|^{2}
+\int_{\R^N}(k_{0}V_1(x)w_{\delta,z}^{2}+l_{0}V_2(x)w_{\delta,z}^{2})dx\big]^{2}}
{\displaystyle(k_{0}+l_{0})\int_{\R^N}\int_{\R^N}
\frac{w^{2}_{\delta,z}(x) w^{2}_{\delta,z}(y)}{|x-y|^{4}}dxdy}.
\endaligned
$$
Using the H\"{o}lder inequality, the definition of $S$, \eqref{eq1.9} and \eqref{eq5.5}
leads to
\begin{equation}\label{e5.11}
\aligned
d
&\leq \frac{1}{4}\frac{\big[(k_{0}+l_{0})\|w\|^{2}
+k_{0}|V_1|_{\frac{N}{2}}|w|_{2^\ast}^{2}+l_{0}|V_2|_{\frac{N}{2}}|w|_{2^\ast}^{2}\big]^{2}}
{(k_{0}+l_{0})\|w\|^2}\\
&\leq\frac{1}{4}\frac{\big[(k_{0}+l_{0})\|w\|^{2}
+k_{0}S^{-1}|V_1|_{\frac{N}{2}}\|w\|^2+l_{0}S^{-1}|V_2|_{\frac{N}{2}}\|w\|^2\big]^{2}}
{(k_{0}+l_{0})\|w\|^2}\\
&=\frac{k_{0}+l_{0}}{4}\|w\|^{2}S_{H,L}^{-2}
\Big(S_{H,L}+\frac{\beta-\alpha_2}{2\beta-\alpha_1-\alpha_2}C(N,4)^{-\frac12}|V_1|_{\frac{N}{2}}
+\frac{\beta-\alpha_1}{2\beta-\alpha_1-\alpha_2}C(N,4)^{-\frac12}|V_2|_{\frac{N}{2}}\Big)^{2}\\
&<\min\Big\{\frac{S_{H,L}^{2}}{4\alpha_1}, \frac{S_{H,L}^{2}}{4\alpha_2}, 2c_{\infty}\Big\},
\endaligned
\end{equation}
where $2^\ast=\frac{2N}{N-2}$. According to the deformation lemma (see
\cite[Theorem 8.4 in Chapter II]{Sm2}), the constrained functional
$J|_\mathcal{N}$ has a Palais-Smale sequence $\{(u_n, v_n)\}\subset \cn$
at the level $d$. By Corollary \ref{cor4.3}, we conclude that $\{(u_n, v_n)\}$
contains a convergent subsequence in $H$, and there is a critical point $(u, v)$
of the constrained functional $J|_{\mathcal{N}}$ with $J(u, v) = d$. Then,
by Lemma \ref{lem2.1} and $d<\min\{S_{H,L}^{2}/4\alpha_1, S_{H,L}^{2}/4\alpha_2\}$,
it is easy to see that $u\neq0$, $v \neq 0$,
and $(u, v)$ is a critical point of the functional $J$. By the maximum principle,
we see that $(u, v)$ is a positive solution of \eqref{eq1.7}.
\end{proof}

\end{document}